\numberwithin{equation}{section}
\newcommand{\beq}{\begin{equation}}
\newcommand{\eeq}{\end{equation}}
\newcommand{\beqs}{\begin{eqnarray*}}
\newcommand{\eeqs}{\end{eqnarray*}}
\newcommand{\beqn}{\begin{eqnarray}}
\newcommand{\eeqn}{\end{eqnarray}}
\newcommand{\beqa}{\begin{array}}
\newcommand{\eeqa}{\end{array}}
\def\lra{\longrightarrow}
\def\bc{\begin{center}}
\def\ec{\end{center}}
\def\begeq{\begin{equation}}
\def\endeq{\end{equation}}
\def\and{\quad{\rm and}\quad}
\let\lra=\longrightarrow
\def\mapright\#1{\,\smash{\mathop{\lra}\limits^{\#1}}\,}
\newtheorem{prop}{Proposition}[section]
\newtheorem{theo}[prop]{Theorem}
\newtheorem{lem}[prop]{Lemma}
\newtheorem{cor}[prop]{Corollary}
\newtheorem{rem}[prop]{Remark}
\newtheorem{defi}[prop]{Definition}
\begin{document}

\title{${\rm K}$-energy on polarized compactifications of Lie groups}

%\date{-}

\author{Yan Li\ \ \ \ \ \ Bin  $\text{Zhou}^*$\ \ \ \ \ \ Xiaohua  $\text{Zhu}^{**}$}

\subjclass[2000]{Primary: 53C25; Secondary:  53C55,
 58J05, 19L10}
\keywords {K-energy,  Lie group, Fano manifolds,  K\"ahler-Einstein metrics}
\address{School of Mathematical Sciences, Peking
University, Beijing 100871, China.}

\email{liyanmath@pku.edu.cn\ \ \ bzhou@pku.edu.cn\ \ \ xhzhu@math.pku.edu.cn}

\thanks {*Partially supported by NSFC 11571018 and 11331001}

\thanks {** Partially supported by NSFC Grants 11271022 and 11331001.}

\begin{abstract}
In this paper, we study  Mabuchi's K-energy on a  compactification $M$  of a reductive Lie group $G$,  which is a complexification of its maximal compact subgroup $K$.  We give a criterion for  the properness of K-energy
on the space of ${ K\times K}$-invariant K\"ahler potentials. In particular,  it turns to give an alternative proof of Delcroix's theorem  for  the existence  of K\"ahler-Einstein metrics  in case of Fano manifolds  $M$.  We also study the existence of  minimizers of  K-energy for general  K\"ahler classes of $M$.
\end{abstract}

\maketitle

\section{Introduction}

The famous Yau-Tian-Donaldson's conjecture for the existence  of K\"ahler-Einstein metrics on Fano manifolds asserts that the  existence is equivalent to the K-stability. The conjecture has been recently solved by Tian \cite{T4}. Chen, Donaldson and Sun also give an alternative proof \cite{CDS}.   The notion of  K-stability was first introduced by Tian  by using special degenerations  \cite{Ti}  and  then reformulated by Donaldson in algebraic geometry  via  test-configurations \cite{D}. For both special degenerations  and  test-configurations, one has to study an infinite number of possible degenerations of the manifold.  A natural  question is how to verify the  K-stability by reducing it to a finite dimensional progress. The answer is known for Fano surfaces by Tian \cite{T1} and for toric Fano manifolds by Wang and Zhu \cite{WZ} (see also \cite{WZZ}). In fact, in both cases the existence is equivalent to the vanishing of Futaki invariant.

More recently,   Delcroix extends Wang-Zhu's result to a polarized compactification $M$ of a reductive Lie group $G$ with $c_1(M)>0$ \cite{Del2}. We call $M$ a {\it (bi-equivariant) compactification of $G$} if it admits a holomorphic $G\times G$ action   on $M$ with an open and dense orbit isomorphic to $G$ as a $G\times G$-homogeneous space.  $(M, L)$ is called a {\it polarized compactification} of $G$   if  $L$ is a $G\times G$-linearized ample line bundle on $M$. For more examples besides the toric manifolds, see \cite{AK, Del2, Del3}.

Let $T^\mathbb C$  be a  $r$-dimensional  maximal complex torus of $G$ with dimension $n$  and $\mathfrak M$  its group of characters.
Assume that $\Phi$ is the root system of $(G,T^\mathbb C)$ in $\mathfrak M$ and $\Phi_+$ is a chosen set of positive roots.  Let  $P$  be the polytope  associated to $(M,L)$, and $P_+$   the part of $P$ defined by $\Phi_+$. Denote by $2P_+$ its dilation at rate $2$.
Let $\rho={\frac 1 2}\sum_{\alpha\in\Phi_+}\alpha$ and $\Xi$ be the relative interior of the cone generated by $\Phi_+$.   Then Delcroix proved

\begin{theo}\label{de}  Let $M$ be a polarized compactification of $G$ with $c_1(M)>0$.
Then  $M$ admits a  K\"ahler-Einstein metric if and only if
\begin{equation}\label{bar}
bar\in 4\rho+\Xi,
\end{equation}
where
$bar=\frac{\int_{2P_+}y\pi(y) \,dy}{\int_{2P_+}\pi(y) \,dy}$
is the barycentre of $2P^+$ with respect to the weighted measure $\pi(y)dy$ and  $\pi(y)=\prod_{\alpha\in\Phi_+}\langle\alpha,y\rangle^2$.
\end{theo}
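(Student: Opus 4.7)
The plan is to derive this theorem as a corollary of a properness criterion for Mabuchi's K-energy on the space of $K\times K$-invariant K\"ahler potentials in $c_1(M)$, combined with Tian's theorem identifying properness of the K-energy (modulo $\Aut_0(M)$) with the existence of a K\"ahler-Einstein metric in the Fano case. This parallels Wang--Zhu's strategy for toric Fano manifolds, but the non-abelian structure of $G$ demands substantial modification.

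The first step is to reduce to convex analysis on the polytope $2P_+$. Using the $KAK$-decomposition and $W$-invariance, any $K\times K$-invariant K\"ahler potential $\varphi$ corresponds to a $W$-invariant smooth strictly convex function $\psi$ on the real Cartan subalgebra $\mathfrak{a}$, whose Legendre dual $\psi^*$ is a convex function on $2P_+$. The appropriate integration measure on $2P_+$ is the Duistermaat--Heckman measure $\pi(y)\,dy$, with the polynomial weight $\pi(y)=\prod_{\alpha\in\Phi_+}\langle\alpha,y\rangle^2$ arising from the Weyl integration formula on $G$.

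The second step is to express $\mathcal{M}(\varphi)$ as a functional on $\psi^*$. I expect a formula of the schematic form
\begin{equation*}
\mathcal{M}(\varphi) \;=\; -\int_{2P_+}\log\det(\nabla^2 \psi^*)\,\pi(y)\,dy \;+\; \int_{2P_+}\bigl\langle bar-4\rho,\,\text{affine part of }\psi\bigr\rangle\,\pi(y)\,dy \;+\;\cdots,
\end{equation*}
the decisive feature being the linear term with coefficient $bar-4\rho$. The shift by $4\rho$ originates from the Jacobian of the $KAK$ coordinate change, $\prod_{\alpha\in\Phi_+}\sinh^2\langle\alpha,x\rangle$, whose leading exponential growth at infinity is $e^{4\rho\cdot x}$ (since $\sum_{\alpha\in\Phi_+}\alpha=2\rho$); this tilts the Legendre side by $-4\rho$. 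The condition $bar\in 4\rho+\Xi$ is then precisely what places the linear functional in the interior of the cone $\Xi$, which is the cone of asymptotic slopes of normalized $W$-invariant convex rays that escape to infinity.

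The third step is the properness analysis. Along a normalized ray $\psi_t$ of $W$-invariant convex functions, the entropy term is bounded below by the $L^1$-norm of $\psi_t$ modulo affine functions with slopes in the center of $\mathfrak{g}$ (the usual toric estimate, still valid since $\pi(y)>0$ on the interior of $2P_+$), while the linear term $\langle bar-4\rho,\cdot\rangle$ provides coercivity in the directions transverse to the center, thanks to $bar-4\rho\in\Xi$. Quotienting by $\Aut_0(M)$ (which acts by translating $\psi$ by affine functions whose slopes lie in the center of $\mathfrak{g}$) gives properness of $\mathcal{M}$ on the reduced space, hence existence of a K\"ahler-Einstein metric. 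Conversely, if the barycenter condition fails, one tests $\mathcal{M}$ along the ray generated by a holomorphic vector field in $T^{\mathbb{C}}$ dual to a violating direction; the linear term becomes non-positive while the entropy part is bounded, which obstructs existence and yields the reverse implication.

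The main obstacle will be the precise derivation of the K-energy formula in step two, in particular pinning down the shift by $4\rho$ and verifying that $bar$ is computed against $2P_+$ (rather than $P$ or $P_+$) with the weighted measure $\pi(y)\,dy$. Once this formula is secured, the subsequent convex-analytic properness argument proceeds along the toric template of Wang--Zhu, since $\pi(y)$ is strictly positive in the interior of $2P_+$ and does not obstruct the relevant estimates despite altering the measure-theoretic setup.
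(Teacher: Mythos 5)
Your sufficiency direction is essentially the paper's argument: the reduction of the K-energy to the functional $\mathcal K(u)=\mathcal L(u)+\mathcal N(u)$ on Legendre transforms over $2P_+$ with the measure $\pi(y)\,dy$, the shift by $4\rho$ coming from $\chi(x)=-2\sum_{\alpha\in\Phi_+}\log\sinh\alpha(x)$ whose gradient tends to $-4\rho$ at infinity, and the coercivity of the linear term under $bar-4\rho\in\Xi$ after quotienting by $Z(G)$ are exactly Sections 2--4 specialized to the Fano case (where $\Lambda_A=1$, $\bar S=n$, $\widetilde{bar}=bar$). One caveat: your claim that the properness analysis ``proceeds along the toric template'' understates the real work; the nonlinear part $\mathcal N$ involves $\chi(\nabla u)$, which is singular exactly where $\pi$ degenerates (the Weyl walls), and the paper needs the whole of Section 4.3 (the asymptotics of the quantity $Q$ near $W_\alpha$, Lemmas 4.9--4.10, and the chopping construction of $2P'$) to control it. This is not a toric phenomenon and cannot be waved away by ``$\pi>0$ in the interior.''

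The genuine gap is in your converse. You propose to obstruct existence by testing the K-energy ``along the ray generated by a holomorphic vector field in $T^{\mathbb C}$ dual to a violating direction.'' This cannot work. First, the violating direction is a fundamental weight $\varpi_1$, which is not $W$-invariant, so the corresponding affine ray $u_0-t\langle\varpi_1,y\rangle$ leaves the space of $W$-invariant Legendre functions, i.e.\ the induced potentials are not $K\times K$-invariant and the whole reduction breaks down. Second, and more fundamentally, the classical Futaki invariant vanishes identically on the semisimple part of the reductive automorphism algebra, so it is computed only on $\eta_c(M)$, where $\alpha(v)=0$ for all roots; no holomorphic vector field can detect the failure of $bar-4\rho\in\Xi$. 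The paper emphasizes precisely this point in the introduction: condition (\ref{bar}) implies the vanishing of the Futaki invariant for vector fields induced by $G\times G$, ``but the inverse is not true in general.'' The correct necessity argument (Proposition \ref{necessary}) instead uses the $W$-invariant convex \emph{piecewise-linear} function $u(y)=\max_{w\in W}\langle w\cdot\varpi_1,y\rangle$, which restricts to $\langle\varpi_1,y\rangle$ on $2P_+$ and defines a nontrivial toric degeneration; its generalized Futaki invariant equals $\frac1V\langle bar-4\rho,\varpi_1\rangle=\frac1{2V}|\alpha_{(1)}|^2\lambda_1\le 0$ by (\ref{L-fano}) and (\ref{+generalized-futaki}), so $M$ is not K-stable and hence admits no K\"ahler--Einstein metric. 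You need a test configuration here, not a one-parameter subgroup.
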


It is pointed by Delcroix that  (\ref{bar}) implies that the Futaki invariant vanishes for holomorphic vector fields induced by $G\times G$, but the inverse is not true in general. Thus one may ask if (\ref{bar}) is related to the K-stability and  is determined by a generalized Futaki invariant for some test-configurations. In the present paper, we will answer this question. In fact, motivated by the study on toric manifolds \cite{D}, we investigate the K-energy on the space of  ${ K\times K}$-invariant K\"ahler potentials  through the reduced K-energy $\mathcal K(\cdot)$ via Legendre transformation. We show that condition  (\ref{bar}) comes  from our formula of  $\mathcal K(\cdot)$ naturally when $c_1(M)>0$ (cf. Proposition \ref{5104}, Proposition \ref{necessary}). Moreover, we give an  alternative  proof of Theorem \ref{de} by showing the properness of the K-energy (cf. Section 4). The  K\"ahler-Ricci solitons case can be discussed similarly (cf. Section 5).

The main purpose of this paper is to give a criterion for  the properness of the K-energy on a  general  polarized compactification  $(M,L)$ of $G$ as done on  a toric manifold in  \cite{ZZ}. We divide $\partial{(2P_+)}\cap\partial (2P)$ into several pieces
$\{F_A\}_{A=1}^{d_0}$ such that  for any $A$, $F_A$ lies on an $(r-1)$-dimensional  hyperplane defined by $\langle y,u_A\rangle=\lambda_A$ for some primitive $u_A\in\mathfrak N$, where $\mathfrak N$ is the $\mathbb Z$-dual of $\mathfrak M$.
Define a cone by  $E_A=\{ty|~t\in[0,1],\,y\in F_A\}$ for any $A$.
It is clear that  $2P_+=\displaystyle\bigcup_{A=1}^{d_0} E_A$. Let
\begin{equation}\label{5106}
\Lambda_A={\frac{2}{\lambda_A}}\left(1+\langle2\rho,u_A\rangle\right).
\end{equation}
 Then the average of  scalar curvature $\bar S$ of $\omega_0\in2\pi c_1(L)$ is given by\footnote{(\ref{5304})  will be  verified  at the end of Section 2.}
\begin{equation}\label{5304}
\bar S={\frac{n\sum_{A}\Lambda_{A}\int_{ E_{A}}\pi \,dy}{\int_{2P_+}\pi \,dy}}.
\end{equation}

Define a weighted barycentre $\widetilde{bar}$ of $2P_+$ by
\begin{equation}\label{+5100}
\widetilde{bar}={\frac{\sum_A\Lambda_A\int_{ E_A}y\pi \,dy}{\sum_A\Lambda_A\int_{ E_A}\pi \,dy}}.
\end{equation}
Note that both  $bar$  and $\widetilde{bar}$ are in the dual space $\mathfrak a^*$ of $\mathfrak a$, where $\mathfrak a$ is the non-compact part of  Lie algebra $\mathfrak t^{\mathbb C}$ of  $T^\mathbb C$.
Denote by $bar_{ss}$ and $\widetilde{bar}_{ss}$ the projections of $bar$ and $\widetilde{bar}$ on the semisimple part $\mathfrak a_{ss}^*$ of  $\mathfrak a^*$, respectively. We prove

\begin{theo}\label{LZZ1}
Let $(M,L)$ be a polarized compactification of $G$ with vanishing Futaki invariant, and $\omega_0\in 2\pi c_1(L)$  a $K\times K$-inariant K\"ahler metric. Suppose that the polytope $2P_+$ satisfies the following conditions,
\begin{eqnarray}
&&\left(\min_A\Lambda_A\cdot\widetilde{bar}_{ss}-4\rho\right)\in\Xi, \label{tildebar1}\\[4pt]
&&\left(\widetilde{bar}_{ss}-bar_{ss}\right)\in\bar{\Xi}, \label{tildebar2}\\
&&(n+1)\cdot\min_A\Lambda_A-\bar S>0. \label{barS}
\end{eqnarray}
Then the K-energy $\mu_{\omega_0}(\cdot)$  is proper  on $\mathcal H_{K\times K}(\omega_0)$  modulo  $Z(G)$, where
$$\mathcal H_{K\times K}(\omega_0)=\{\phi\in C^\infty(M)\ |~ \omega_\phi=\omega_0+\sqrt{-1}\partial\bar{\partial}\phi>0~{\rm and}~\phi ~\text{ ~ is~ $K\times K$-invariant }\}$$
and $Z(G)$ is the centre of $G$.
\end{theo}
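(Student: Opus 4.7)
The plan is to adapt to the non-abelian bi-equivariant setting the reduction-to-polytope strategy used by Zhu--Zhou \cite{ZZ} for toric Fano manifolds. First, via the $KAK$-decomposition a potential $\phi\in\mathcal{H}_{K\times K}(\omega_0)$ is determined by a $W$-invariant convex function on $\mathfrak{a}$, whose Legendre dual $u$ is a convex function on $2P_+$ with prescribed boundary behavior on $\partial(2P_+)$. Applying this Legendre reduction and invoking the formula for the reduced K-energy $\mathcal{K}(u)$ (cf. Proposition \ref{5104}), I would rewrite $\mu_{\omega_0}(\phi)$ as the sum of a nonlinear entropy-type piece
$$\mathcal{N}(u)=-\int_{2P_+}\log\det(D^2u)\,\pi(y)\,dy$$
and a linear functional $\mathcal{L}(u)$ combining a boundary integral $\sum_A \Lambda_A\int_{F_A} u\,\pi\,d\sigma$ with the bulk terms $-\bar S\int_{2P_+} u\,\pi\,dy$ and $-\int_{2P_+}\langle 4\rho,\nabla u\rangle\pi\,dy$.

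Next I would show that, for $u$ normalized so that the $Z(G)$-translation ambiguity is fixed, the linear part $\mathcal{L}(u)$ is bounded below by a multiple of $\|u\|_{L^1(\pi\,dy)}$. After integration by parts, $\mathcal{L}(u)$ can be written as the integral of $u$ against a signed measure whose centroid on $\mathfrak{a}^*_{ss}$ is controlled by $bar$, $\widetilde{bar}$, $4\rho$ and $\bar S$. The vanishing of the Futaki invariant kills the centre-of-mass obstruction on the abelian factor of $\mathfrak{a}^*$, so one is reduced to the semisimple piece. Hypothesis (\ref{tildebar1}) yields positivity of the boundary contribution at the weighted barycentre $\widetilde{bar}_{ss}$, (\ref{tildebar2}) transfers the estimate from $\widetilde{bar}_{ss}$ to the true barycentre $bar_{ss}$ along the cone $\bar{\Xi}$ on which positivity survives, and (\ref{barS}) supplies the quantitative slack needed to absorb the $-\bar S$ bulk term against the boundary weights $\Lambda_A$. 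Combining these, one should obtain
$$\mathcal{L}(u)\ge \lambda\int_{2P_+} u\,\pi\,dy-C$$
for some $\lambda>0$ and all such normalized $u$.

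Finally, combining this estimate with $\mathcal{N}(u)$ produces properness: the entropy term controls oscillation of $u$ via a weighted Jensen inequality against $\pi(y)\,dy$, while $\mathcal{L}$ controls translations once the normalization adapted to $\widetilde{bar}$ and $bar$ has been fixed. The transfer from properness of $\mathcal{K}(u)$ on $2P_+$ back to properness of $\mu_{\omega_0}(\phi)$ on $\mathcal{H}_{K\times K}(\omega_0)/Z(G)$, expressed as a lower bound in terms of Aubin's $J$-functional, then proceeds as in the toric case \cite{ZZ}. The main obstacle will be the second step, specifically the integration-by-parts identity that converts the boundary and gradient terms of $\mathcal{K}(u)$ into the form $\int u\,d\nu$ with $\nu$ explicit enough that (\ref{tildebar1})--(\ref{barS}) imply its positivity. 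Since the weight $\pi(y)=\prod_{\alpha\in\Phi_+}\langle\alpha,y\rangle^2$ vanishes on the walls of $\Xi$, the boundary contributions split according to whether the face $F_A$ lies on a root wall or on a face of $2P$, and one must carefully keep track of the decomposition $\mathfrak{a}^*=\mathfrak{a}^*_{Z(G)}\oplus\mathfrak{a}^*_{ss}$ in identifying which integrations by parts give admissible boundary terms; this algebraic bookkeeping is the most delicate part of the argument.
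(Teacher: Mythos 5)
Your overall strategy (Legendre reduction, splitting $\mathcal K=\mathcal L+\mathcal N$, coercivity of the linear part from the barycentre conditions, transfer back via the $J$-functional) matches the paper's, and your treatment of the linear part is essentially right in spirit: the paper also uses the convexity inequality $\langle y-\widetilde{bar}_{ss},\nabla u\rangle\ge u(y)-u(\widetilde{bar}_{ss})$ together with the vanishing of the Futaki invariant to kill the toric component, each of (\ref{tildebar1})--(\ref{barS}) making one of three terms nonnegative, followed by a compactness/contradiction argument to upgrade nonnegativity to the coercive bound $\mathcal L(u)\ge\lambda\int_{\partial(2P_+)}\langle y,\nu\rangle u\pi\,d\sigma_0$.

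However, there is a genuine gap in your nonlinear part. You set $\mathcal N(u)=-\int_{2P_+}\log\det(D^2u)\,\pi\,dy$, but the reduced K-energy on a group compactification contains the additional nonlinear term $\int_{2P_+}\chi(\nabla u)\pi\,dy$ with $\chi(x)=-2\sum_{\alpha\in\Phi_+}\log\sinh\alpha(x)$, coming from the density $\mathbf J(x)=\prod_{\alpha\in\Phi_+}\sinh^2\alpha(x)$ in the $KAK$-integration formula; only its linear asymptotic $-4\langle\rho,\nabla u\rangle$ can be moved into $\mathcal L$, and the remainder $\chi(\nabla u)+4\langle\rho,\nabla u\rangle$ must stay in $\mathcal N$. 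Omitting it, your "weighted Jensen inequality" step is the purely toric argument and never confronts the actual difficulty. In the paper, $\mathcal N$ is bounded below by linearizing both $-\log\det$ and $\chi$ at the Guillemin potential $u_0$ and integrating by parts, which produces the singular quantity $Q$ of (\ref{59}) involving $\coth\langle\alpha,\nabla u_0\rangle$, $u_{0,ij}\alpha_i\alpha_j/\sinh^2\langle\alpha,\nabla u_0\rangle$ and $u_0^{ij}\alpha_i\alpha_j/(\alpha(y))^2$; controlling $\int_{2P_+}Qu\pi\,dy$ requires the detailed asymptotics of $u_0^{ij}$ near the Weyl walls (Lemmas \ref{u^-1}, \ref{M_2}, \ref{521}), a decomposition $2P=2P'\cup U$ adapted to walls meeting outer faces non-orthogonally, and the iterated reflection estimate of Lemma \ref{52}. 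This analytic estimate near the Weyl walls --- not the algebraic bookkeeping of the splitting $\mathfrak a^*=\mathfrak a_t^*\oplus\mathfrak a_{ss}^*$ that you flag --- is the heart of the proof and is entirely missing from your proposal; without it the lower bound on $\mathcal N$, and hence properness, is not established.
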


\vskip 8pt

In case that  $M$ is Fano and $L=K_M^{-1}$,  then $\bar S=n$ and $\Lambda_A=1$ for all $A$. We have $\widetilde{bar}=bar$, thus \eqref{tildebar2}, \eqref{barS} are automatically satisfied. Moreover,   (\ref{bar}) is equivalent
   to  the vanishing of Futaki invariant and (\ref{tildebar1}) (cf.  Corollary \ref{+5205}).
Consequently, $\mu_{\omega_0}(\cdot)$ is proper modulo the action of $Z(G)$. Hence we get the an alternative proof
for the sufficient part of Theorem \ref{de} \cite{CT, TZ}.

As mentioned above,   we prove  Theorem \ref{LZZ1} by using the reduced K-energy $\mathcal K(\cdot)$.   One of the advantages  of $\mathcal K(\cdot)$ is that   it can be defined on a complete space  $\tilde{\mathcal C}_*$   of convex functions on $2P_+$.  Following the argument in \cite{ZZ08},   we discuss  the  semi-continuity property of $\mathcal K(\cdot)$. As  a consequence, we prove the following

\begin{theo}\label{LZZ2}
$\mathcal K(u)$ is lower semi-continuous on $\tilde{\mathcal C}_*$. Furthermore, if $\mu_{\omega_0}(\cdot)$ is proper  on   $\mathcal H_{K\times K}(\omega_0)$ modulo $Z(G)$,   then there exists a minimizer of $\mathcal K(\cdot)$ on $\tilde{\mathcal C}_*$.
\end{theo}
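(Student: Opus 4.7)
The plan is to follow the approach of \cite{ZZ08} in the toric case, adapted to the weighted measure $\pi(y)\,dy$ on $2P_+$. Recall that via Legendre transform of $K\times K$-invariant Kähler potentials, the reduced K-energy should take the schematic form
$$\mathcal K(u)=-\int_{2P_+}\log\det(u_{ij})\,\pi(y)\,dy+L(u),$$
where $L(u)$ collects the linear-in-$u$ boundary and bulk terms encoded by the polytope data and by the curvature normalization (so that the $L$-term detects the Futaki-type obstruction). The space $\tilde{\mathcal C}_*$ consists of convex functions on $2P_+$, normalized to kill the $Z(G)$-ambiguity (for instance, by fixing the value and gradient at a chosen interior point, or by imposing $\min u=u(0)=0$ on the semisimple part of $\mathfrak a^*$). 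The natural topology on $\tilde{\mathcal C}_*$ is pointwise (equivalently $L^1_{\mathrm{loc}}$) convergence on the interior of $2P_+$, under which normalized convex functions are locally compact.

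For the lower semi-continuity, suppose $u_k\to u$ in $\tilde{\mathcal C}_*$. The linear part $L(\cdot)$ is continuous under this convergence by dominated convergence (using the uniform local bounds on convex functions with the chosen normalization and integrability of the boundary traces against $\pi$). For the entropy term, I would use the classical fact that if $u_k\to u$ pointwise a.e.\ with $u_k,u$ convex, then the real Monge–Amp\`ere measures $\det(D^2u_k)\,dy$ converge weakly to $\det(D^2 u)\,dy$ on compact subsets of the interior, while $\log\det(D^2u_k)$ is only bounded above by the Monge–Amp\`ere mass (integrable against $\pi$). A Fatou-type argument, applied after truncating $\log t$ from above, then yields
$$\liminf_{k\to\infty}\Bigl(-\!\int_{2P_+}\log\det(u_{k,ij})\,\pi\,dy\Bigr)\ \ge\ -\int_{2P_+}\log\det(u_{ij})\,\pi\,dy,$$
exactly as in \cite{ZZ08}. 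This part has to be carried out carefully near $\partial(2P_+)$, especially along the walls where $\pi$ vanishes and along the faces where $u$ may be singular; this boundary analysis is where the weight $\pi(y)=\prod_{\alpha\in\Phi_+}\langle\alpha,y\rangle^2$ plays a stabilizing role by suppressing contributions from the walls of the positive Weyl chamber.

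For the existence of a minimizer, take a minimizing sequence $\{u_k\}\subset\tilde{\mathcal C}_*$, so that $\mathcal K(u_k)\to\inf\mathcal K$. Properness of the K-energy $\mu_{\omega_0}$ on $\mathcal H_{K\times K}(\omega_0)$ modulo $Z(G)$ translates, via the correspondence between $K\times K$-invariant potentials and their Legendre transforms, into the statement that $\mathcal K(u)\to+\infty$ whenever a suitable $J$-type functional on $\tilde{\mathcal C}_*$ tends to $+\infty$ (modulo the normalization that removes $Z(G)$). Hence $\{u_k\}$ has uniformly bounded $J$-functional, and standard convex geometry gives uniform local $L^\infty$ and Lipschitz estimates on any compact subset of the interior of $2P_+$. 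By a diagonal Arzel\`a–Ascoli / Blaschke-selection argument one extracts a subsequence $u_{k_j}$ converging pointwise a.e.\ to some $u_\infty\in\tilde{\mathcal C}_*$.

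Applying the lower semi-continuity established in the first step gives $\mathcal K(u_\infty)\le\liminf_j \mathcal K(u_{k_j})=\inf\mathcal K$, so $u_\infty$ is a minimizer. The main obstacle is the lower semi-continuity of the $-\log\det$ entropy against the degenerate weight $\pi$: one needs to rule out concentration of the Monge–Amp\`ere mass of $u_k$ into the boundary walls where $\pi$ vanishes, and to ensure that limits of the barycenter/linear terms stay finite despite the possibly unbounded boundary behavior of $u_k$. Controlling these two boundary effects is what forces the normalization defining $\tilde{\mathcal C}_*$ and is where the proof departs from the purely toric case in \cite{ZZ08}.
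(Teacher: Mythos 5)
Your skeleton (interior lower semi-continuity of the entropy term plus compactness of a normalized minimizing sequence under properness) matches the paper's, but there are two concrete gaps. First, your schematic $\mathcal K(u)=-\int_{2P_+}\log\det(u_{ij})\,\pi\,dy+L(u)$ drops the term $\int_{2P_+}\chi(\nabla u)\,\pi\,dy$ with $\chi(x)=-2\sum_{\alpha\in\Phi_+}\log\sinh\alpha(x)$. This term is nonlinear in $u$, is precisely the new feature relative to the toric case of \cite{ZZ08}, and cannot be absorbed into a ``linear-in-$u$'' part; the paper groups it with $4\langle\rho,\nabla u\rangle$ inside $\mathcal N(u)$ and handles its semi-continuity by Fatou on dilated interior polytopes (Lemma \ref{705}), an argument you would still need to supply. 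You also need to explain why $\mathcal N(u)$ is even well defined (i.e.\ $>-\infty$) for a merely convex limit $u$; the paper does this by defining $\partial^2u$ through the regular part of the Monge--Amp\`ere measure and proving Proposition \ref{finite N}.

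Second, and more seriously, locally uniform convergence in the interior plus a truncated-Fatou argument only controls the $\liminf$ of the entropy on compact subsets $2P_+^{\delta}$; it says nothing about the collar $2P_+\setminus 2P_+^{\delta}$, where the Monge--Amp\`ere mass of the $u_k$ can concentrate. You explicitly name this as ``the main obstacle'' but offer no argument, and without it the lower semi-continuity is not proved. The paper's resolution is quantitative and uniform in $k$: integration by parts against the Guillemin potential $u_0$, combined with the pointwise bounds on $Q$ and on $\tilde Q^i\nu_i$ established in Section 4, gives Proposition \ref{finite N} with a bound linear in the quantities $\int_{\partial(2P_+)}u\langle y,\nu\rangle\pi\,d\sigma_0$ and $\int_{2P_+}u\langle\rho,\nabla\pi\rangle\,dy$ appearing in (\ref{c-bounded}); then the scaling trick (\ref{scale-difference}) in the proof of Proposition \ref{semi-continuity} bounds the collar contribution by ${(C_1+C_2)\kappa}/{\Lambda}+C(u_0)\delta+C\delta\log\Lambda$ uniformly along the sequence. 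These boundary estimates, not the interior Fatou argument, are the substance of the theorem, and they are also the reason $\tilde{\mathcal C}_*$ must be defined by the integral bounds (\ref{c-bounded}) rather than by a pointwise normalization alone. Your existence step is otherwise the paper's: properness plus Lemma \ref{609} places the normalized minimizing sequence in a fixed $\tilde{\mathcal C}_*^{\kappa}$, Lemma \ref{extend u} extracts a limit there, and lower semi-continuity concludes.
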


It is interesting to study the regularity of  minimizers in Theorem \ref{LZZ2}. We guess that they are smooth in $2P_+$ if the dimension of the torus $T^\mathbb C$ is  less than  two. In case of toric surfaces,  it is verified in \cite{Z1, Z2}.

The paper is organized as following: In Section 2, we review some preliminaries on $K\times K$-invariant metrics on $M$, and then we give a formula of scalar curvature  of  such metrics  in terms of  Legendre functions.  The formula of  $\mathcal K(\cdot)$  is obtained in Section 3. In  Section 4, we  use the idea   in \cite{ZZ}  for  toric manifolds to  prove Theorem \ref{LZZ1},  but there are new difficulties arising  from energy estimates near the Weyl walls to overcome.  In  Section 5,  we focus on the Fano case, and  prove  the properness of modified K-energy provided a modified  barycentre condition (\ref{bar4}) (cf. Theorem \ref{thm40}).  In Section 6, we prove  Theorem \ref{LZZ2}.

\section{Preliminaries}

In this section,    we  first recall  some preliminaries for  $K\times K$-invariant K\"ahler metrics on a  polarized compactification $(M, L)$ of   $G$
\cite{Del1, Del2, Del3} and the associated  Legendre functions,  then we  give a computation of scalar curvature in terms of  Legendre functions.

\subsection{Polarized compactification}

Let $J$ be the complex structure of $G$ and $K$ be one of its maximal compact subgroup such that $G=K^{\mathbb C}$.
Choose $T$ a maximal torus of $K$. Denote by $\mathfrak g$, $\mathfrak k$,
$\mathfrak t$ the corresponding Lie algebra of $G,\,K,\,T$, respectively. Then $$\mathfrak g=\mathfrak k\oplus J\mathfrak k.$$
Set $\mathfrak a=J\mathfrak t$ and Lie algebra of $Z(G)$ by  $\mathfrak z(\mathfrak g)$.   We decompose $\mathfrak a$ as a toric part and a semisimple part:
$$\mathfrak a = \mathfrak a_t\oplus\mathfrak a_{ss},$$
where $\mathfrak a_t:= \mathfrak z(\mathfrak g)\cap\mathfrak a$ and $\mathfrak a_{ss}:= \mathfrak a\cap[\mathfrak g,\mathfrak g]$.   Then for any $x\in\mathfrak a$,  we have  $x=x_t+x_{ss}$ with $x_t\in\mathfrak a_t$ and   $x_{ss}\in\mathfrak a_{ss}$. We extend the Killing form on $\mathfrak a_{ss}$ to a scalar product $\langle\cdot,\cdot\rangle$ on $\mathfrak a$ such that $\mathfrak a_t$ is orthogonal to $\mathfrak a_{ss}$. Identify $\mathfrak a$ and  its dual $\mathfrak a^*$ by $\langle\cdot,\cdot\rangle$.  Then $\mathfrak a^*$ also has an orthogonal decomposition
$$\mathfrak a^*=\mathfrak a_t^*\oplus\mathfrak a_{ss}^*.$$

Denote by $\Phi$ and $W$ the root system and Weyl group with respect to $(G,T^\mathbb C)$, respectively. Choose a system of positive roots $\Phi_+$.  Then it defines a positive Weyl chamber $\mathfrak a_+\subset\mathfrak a$, and a positive Weyl chamber ${\mathfrak a}^*_+$ on $\mathfrak a^*$,  where
 $${\mathfrak a}^*_+:=\{y|~\alpha(y):=\langle\alpha,y\rangle>0,~\forall\alpha\in\Phi_+\},$$
 which is also called the relative interior  $\Xi$ of the cone generated by $\Phi_+$.
The Weyl wall $W_\alpha$ is defined by  $W_\alpha:=\{y|~\alpha(y)=0\}$ for each $\alpha\in \Phi_+$.

\subsection{$K\times K$-invariant K\"ahler metrics}Let  $Z$ be the closure of $T^\mathbb C$ in $M$. It is known that $(Z, L|_Z)$ is a polarized toric manifold with a $W$-action, and $L|_Z$ is a $W$-linearized ample toric line bundle on $Z$  \cite{AB1,AB2, AK, Del2}. Let $\omega_0\in 2\pi c_1(L)$ be  a $K\times K$-invariant K\"ahler form induced from $(M, L)$  and  $P$ be the polytope associated to $(Z, L|_Z)$, which is defined by the moment map associated to $\omega_0$.   Then $P$ is a $W$-invariant  delzent polytope in $\mathfrak a^*$.
By the $K\times K$-invariance,  for any $\phi\in\mathcal H_{K\times K}(\omega_0)$, the restriction of $\omega_\phi$ on $Z$ is a toric K\"ahler metric. It induces a smooth strictly convex function $\psi$ on ${\mathfrak a}$, which is $W$-invariant \cite{AL}.

 By the $KAK$-decomposition (\cite{Kna}, Theorem 7.39), for any $g\in G$,
there are $k_1,\,k_2\in K$ and $x\in\mathfrak a$ such that $g=k_1\exp(x)k_2$. Here $x$ is uniquely determined up to a $W$-action. This means that $x$ is unique in $\bar{\mathfrak a}_+$.
Then we define a smooth $K\times K$-invariant function  $\Psi$   on $G$   by
$$\Psi( \exp(\cdot))=\psi(\cdot):~{\mathfrak a}\to\mathbb R.$$
Clearly $\Psi$ is well-defined since $\psi$ is $W$-invariant. We usually call $\psi$ \emph{the function associated to $\Psi$}. It can be verified  that $\Psi$ is a K\"ahler potential on $G$ such that $\omega=\sqrt{-1}\partial\bar\partial \Psi$ on $G$  (cf.   Lemma \ref{Hessian} below).

The following $KAK$-integral formula can be found in \cite{Kna2}, Proposition 5.28 (see also \cite{HY})

\begin{prop}\label{KAK int}
Let $dV_G$ be a Haar measure on $G$ and $dx$  the Lebesgue measure
on $\mathfrak{a}$.
Then there exists a constant $C_H>0$ such that for any
$K\times K$-invariant, $dV_G$-integrable function $\Psi$ on $G$,
$$\int_G \Psi(g)\,dV_G= C_H\int_{\mathfrak{a}_+}\mathbf{J}(x)\psi(x)\,dx,$$
where
$\mathbf J(x)=\prod_{\alpha \in \Phi_+} \sinh^2(\alpha(x)).$
\end{prop}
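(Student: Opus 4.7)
The plan is to derive the formula as a change-of-variables through the Cartan ($KAK$) decomposition. Consider the smooth map
$$\Phi:K\times\mathfrak{a}_+\times K\longrightarrow G,\qquad (k_1,x,k_2)\longmapsto k_1\exp(x)k_2.$$
The complement of its image is contained in the union of the Weyl walls $\{\alpha(x)=0\}$, which is a finite union of lower-dimensional strata of Haar measure zero in $G$, so integration over $G$ reduces to integration over the image of $\Phi$. Since $\mathfrak{a}\subset\mathfrak{t}^{\mathbb{C}}$, the compact torus $T$ centralizes every $\exp(x)$, so the fiber of $\Phi$ over a regular point is exactly the orbit of the $T$-action $(k_1,x,k_2)\mapsto(k_1t^{-1},x,tk_2)$; this contributes a factor of $\mathrm{vol}(T)$ upon quotienting.

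The central step is to compute the Jacobian of $\Phi$. Using the left-invariant trivialization $T_g G\cong\mathfrak{g}$, one decomposes tangent vectors via the real root-space decomposition
$$\mathfrak{g}=\mathfrak{t}\oplus\mathfrak{a}\oplus\bigoplus_{\alpha\in\Phi_+}\bigl(\mathfrak{g}_\alpha\oplus\mathfrak{g}_{-\alpha}\bigr)_{\mathbb{R}}.$$
The $\mathfrak{a}$-direction contributes the Lebesgue measure $dx$, the $\mathfrak{t}$-direction is absorbed by the $T$-fiber action described above, and on each real plane $(\mathfrak{g}_\alpha\oplus\mathfrak{g}_{-\alpha})\cap\mathfrak{k}$ the operator $\mathrm{Ad}(\exp(\pm x))-\mathrm{Id}$ has eigenvalues $e^{\pm\alpha(x)}-1$. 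Combining the $\pm\alpha$ contributions yields, on each such plane, a determinant proportional to $\sinh^2(\alpha(x))$; taking the product over $\Phi_+$ produces the density $\mathbf{J}(x)=\prod_{\alpha\in\Phi_+}\sinh^2(\alpha(x))$.

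Putting the pieces together, one obtains
$$\int_G\Psi(g)\,dV_G=C_H'\int_K\!\int_{\mathfrak{a}_+}\!\int_K\Psi(k_1\exp(x)k_2)\,\mathbf{J}(x)\,dk_2\,dx\,dk_1$$
for every $dV_G$-integrable $\Psi$ on $G$. Specializing to a $K\times K$-invariant $\Psi$, the double $K$-integration factors out as $\mathrm{vol}(K)^2$ and is absorbed into the single constant $C_H$, giving the stated identity.

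The main obstacle is the bookkeeping of root-space contributions in the Jacobian computation; this is a classical calculation on real semisimple Lie groups, and rather than redoing it I would simply invoke Knapp's derivation (\emph{Lie groups beyond an introduction}, Proposition 5.28) as the paper does. A secondary subtlety is to verify that the singular locus where some $\alpha(x)=0$ really has measure zero and that $\Phi$ restricts to a submersion over the regular part, but both follow from the $KAK$-uniqueness statement recalled in the text.
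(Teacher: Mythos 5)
Your proposal is correct and takes essentially the same route as the paper, which offers no proof of its own but simply cites Knapp's Proposition 5.28 (and Hu--Yan) for this classical $KAK$ integration formula; your sketch of the change of variables through $\Phi:(k_1,x,k_2)\mapsto k_1\exp(x)k_2$, the measure-zero singular locus, and the $T$-fiber is sound in outline and matches the standard derivation. One small caveat: taken literally, your eigenvalue bookkeeping for $\mathrm{Ad}(\exp(\pm x))-\mathrm{Id}$ would yield $(e^{\alpha(x)}-1)(e^{-\alpha(x)}-1)=-4\sinh^2(\alpha(x)/2)$, i.e. a density $\sinh^2(\alpha(x)/2)$ rather than $\sinh^2(\alpha(x))$ --- the correct factor arises from the $\mathfrak{k}$-to-$\mathfrak{p}$ component of $\mathrm{Ad}(\exp(-x))$ on each restricted root plane, which is $-\sinh\alpha(x)$ per real dimension (multiplicity $2$ here) --- but since you explicitly defer the Jacobian computation to Knapp, as the paper does, this does not affect the conclusion.
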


Next we recall the local holomorphic coordinates on $G$ used in  \cite{Del2}.   By the standard Cartan decomposition, we can decompose $\mathfrak g$ as
$$\mathfrak g=\left(\mathfrak t\oplus\mathfrak a\right)\oplus\left(\oplus_{\alpha\in\Phi}V_{\alpha}\right),$$
where $V_{\alpha}=\{X\in\mathfrak g|~ad_H(X)=\alpha(H)X,~\forall H\in\mathfrak t\oplus\mathfrak a\}$, the root space of complex dimension $1$ with respect to $\alpha$. By \cite{Hel}, one can choose $X_{\alpha}\in V_{\alpha}$ such that $X_{-\alpha}=-\iota(X_{\alpha})$ and
$[X_{\alpha},X_{-\alpha}]=\alpha^{\vee},$ where $\iota$ is the Cartan involution and $\alpha^{\vee}$ is a dual of $\alpha$ by the Killing form.
Let $E_{\alpha}:=X_{\alpha}-X_{-\alpha}$ and $E_{-\alpha}:=J(X_{\alpha}+X_{-\alpha})$. Denote by $\mathfrak k_{\alpha},\,\mathfrak k_{-\alpha}$ the real line spanned by $E_\alpha,\,E_{-\alpha}$, respectively.
Then we have the Cartan decomposition of $\mathfrak k$,
$$\mathfrak k=\mathfrak t\oplus\left(\oplus_{\alpha\in\Phi_+}\left(\mathfrak k_{\alpha}\oplus\mathfrak k_{-\alpha}\right)\right).$$
Choose a real basis $\{E^0_1,...,E^0_r\}$ of $\mathfrak t$.  Then $\{E^0_1,...,E^0_r\}$ together with $\{E_{\alpha},E_{-\alpha}\}_{\alpha\in\Phi_+}$ forms a real basis of $\mathfrak k$, which is indexed by $\{E_1,...,E_n\}$. $\{E_1,...,E_n\}$ can also be regarded as a complex basis of $\mathfrak g$. For any $g\in G$, we define local coordinates $\{z_{(g)}^i\}_{i=1,...,n}$ on a neighborhood of $g$ by
$$(z_{(g)}^i)\to\exp(z_{(g)}^iE_i)g.$$
It is easy to see  that $\theta^i|_g=dz_{(g)}^i|_g$,  where $\theta^i$ is the  dual of $E_i$,  which is a right-invariant holomorphic  $1$-form.    Thus
$\displaystyle{\wedge_{i=1}^n\left(dz_{(g)}^i\wedge d\bar{z_{(g)}^i}\right)}|_g$ is  also a  right-invariant  $(n,n)$-form,  which defines a Haar measure  $dV_G$.

The complex Hessian of the $K\times K$-invariant function  $\Psi$ in the above local coordinates was computed by  Delcroix as follows  \cite[Theorem 1.2]{Del2}.

\begin{lem}\label{Hessian}
Let $\Psi$ be a $K\times K$ invariant function on $G$, and $\psi$ the associated function
on $\mathfrak{a}$. Let $\Phi_+=\{\alpha_{(1)},...,\alpha_{(\frac{n-r}{2})}\}$. Then  for $x\in \mathfrak{a}_+$,
the complex Hessian matrix of $\Psi$  in the  above coordinates is diagonal by blocks, and equals to
\begin{equation}\label{+21}
\mathrm{Hess}_{\mathbb{C}}(\Psi)(\exp(x)) =
\begin{pmatrix}
\frac{1}{4}\mathrm{Hess}_{\mathbb{R}}(\psi)(x)& 0 &  & & 0 \\
 0 & M_{\alpha_{(1)}}(x) & & & 0 \\
 0 & 0 & \ddots & & \vdots \\
\vdots & \vdots & & \ddots & 0\\
 0 & 0 &  & & M_{\alpha_{(\frac{n-r}{2})}}(x)\\
\end{pmatrix},
\end{equation}
where
\[
M_{\alpha_{(i)}}(x) = \frac{1}{2}\langle\alpha_{(i)},\nabla \psi(x)\rangle
\begin{pmatrix}
\coth\alpha_{(i)}(x) & \sqrt{-1} \\
-\sqrt{-1} & \coth\alpha_{(i)}(x) \\
\end{pmatrix}.
\]
\end{lem}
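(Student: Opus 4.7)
The proof has two main stages: first, reduce $\mathrm{Hess}_{\mathbb{C}}(\Psi)$ at $\exp(x)$ to a block-diagonal form via $T$-equivariance; second, compute the torus block and each root block directly.

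For the block structure, I consider the diagonal torus $T_{\mathrm{diag}}:=\{(t,t):t\in T\}\subset K\times K$. Since $\mathfrak{t}\oplus\mathfrak{a}$ is abelian, $T$ commutes with $\exp(x)$, and so $T_{\mathrm{diag}}$ stabilizes $\exp(x)$ under $(k_1,k_2)\cdot g=k_1gk_2^{-1}$ and acts on $T_{\exp(x)}G\cong\mathfrak{g}$ via $\mathrm{Ad}(T)$. The $T$-isotypic decomposition reads
\[
\mathfrak{g}=\mathfrak{t}^{\mathbb{C}}\oplus\bigoplus_{\alpha\in\Phi}\mathbb{C}\,X_\alpha,
\]
with $\mathfrak{t}^{\mathbb{C}}=\mathfrak{t}\oplus\mathfrak{a}$ of trivial weight and $X_\alpha$ of weight $\alpha$. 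Since $\mathrm{Hess}_{\mathbb{C}}(\Psi)$ is a $T_{\mathrm{diag}}$-invariant Hermitian form, Schur's lemma forces every pairing between distinct characters to vanish: in the basis $\{E^0_j\}\cup\{X_\alpha\}$ the matrix is diagonal-in-blocks, with one $r\times r$ torus block and one diagonal $\alpha$-entry for each root. Passing to the basis $\{E_\alpha,E_{-\alpha}\}$ via the $\mathbb{C}$-linear recombination $E_\alpha=X_\alpha-X_{-\alpha}$, $E_{-\alpha}=\sqrt{-1}(X_\alpha+X_{-\alpha})$, each positive-root pair combines into a $2\times 2$ Hermitian block; this change of basis is exactly what will produce the off-diagonal $\pm\sqrt{-1}$ entries of $M_\alpha(x)$.

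For the torus block, since $\mathfrak{t}\oplus\mathfrak{a}$ is abelian, writing $z^j=s_j+\sqrt{-1}\,t_j$ yields
\[
\exp\!\Big(\sum_j z^j E^0_j\Big)\exp(x)=\exp\!\Big(\sum_j s_j E^0_j\Big)\cdot\exp\!\Big(x+\sum_j t_j\,JE^0_j\Big).
\]
The first factor lies in $T\subset K$, so by $K$-invariance $\Psi$ at this product equals $\psi\bigl(x+\sum_j t_j JE^0_j\bigr)$, a function of the $t$-variables only. Applying $\partial^2/(\partial z^j\partial\bar z^k)=\tfrac14(\partial_{s_j}\partial_{s_k}+\partial_{t_j}\partial_{t_k})$ annihilates the $s$-derivatives and returns $\tfrac14\mathrm{Hess}_{\mathbb{R}}(\psi)(x)$.

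For the $\alpha$-root block I would reduce to the rank-one subgroup $G_\alpha\subset G$ with Lie algebra $\mathfrak{sl}_\alpha:=\mathrm{span}_{\mathbb{C}}(X_\alpha,X_{-\alpha},\alpha^\vee)\cong\mathfrak{sl}(2,\mathbb{C})$. Expanding $\exp(w_+X_\alpha+w_-X_{-\alpha})\exp(x)$ in $KAK$-form via Baker--Campbell--Hausdorff as $k_1(w)\exp(x+H(w))k_2(w)^{-1}$, one has $\Psi=\psi(x+H(w))$. Using $[x,X_{\pm\alpha}]=\pm\alpha(x)X_{\pm\alpha}$ and $[X_\alpha,X_{-\alpha}]=\alpha^\vee$, one computes the $\mathfrak{a}$-component of $H(w)$ to second order; this is essentially the polar decomposition on $SL(2,\mathbb{C})/SU(2)\cong\mathbb{H}^3$, whose $\sinh$-scaling of geodesic coordinates inverts to a $\coth\alpha(x)$ factor upon differentiating $\psi$ twice, while the $\alpha^\vee$-direction of $\nabla\psi$ supplies the $\langle\alpha,\nabla\psi(x)\rangle$ factor. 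The main obstacle is carrying out this $KAK$-expansion cleanly and correctly extracting the second-order $\mathfrak{a}$-component; once the diagonal root block in the $\{X_\alpha,X_{-\alpha}\}$-basis is obtained, the change-of-basis above converts it into the stated $M_\alpha(x)$, with the off-diagonal $\pm\sqrt{-1}$ arising from the imaginary coefficients of $E_{-\alpha}$ relative to $X_{\pm\alpha}$.
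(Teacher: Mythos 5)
The paper itself offers no proof of this lemma: it is quoted directly from Delcroix (\cite[Theorem 1.2]{Del2}), so there is no in-paper argument to compare against and your proposal must stand on its own. The first two of your three steps do. The $T_{\mathrm{diag}}$-equivariance argument is correct: $\exp(x)$ is fixed by $(t,t)$, the chart $(z^i)\mapsto\exp(z^iE_i)g$ intertwines this action with the complex-linear map $\mathrm{Ad}(t)$ on $\mathfrak g$, and since the complex Hessian is a \emph{sesquilinear} invariant it pairs the weight space of $\alpha$ only with itself (not with $-\alpha$), which yields exactly the claimed block shape after the recombination $E_\alpha=X_\alpha-X_{-\alpha}$, $E_{-\alpha}=\sqrt{-1}(X_\alpha+X_{-\alpha})$. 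The torus block is also computed completely and correctly.

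The third step, however, is a genuine gap, and it is where all the quantitative content of the formula lives: you never actually compute the two real numbers $h_{\pm\alpha}=\mathrm{Hess}_{\mathbb C}(\Psi)(X_{\pm\alpha},X_{\pm\alpha})$, you only assert that a $KAK$ expansion ``inverts to a $\coth\alpha(x)$ factor.'' Two things must be supplied. First, you must show that the linear-in-$w$ part of the transverse $\mathfrak a$-component $H(w)$ vanishes; if it did not, the term $\tfrac12\mathrm{Hess}_{\mathbb R}(\psi)(H^{(1)},H^{(1)})$ would contaminate the root blocks with second derivatives of $\psi$, contradicting the statement (this follows because $J\mathfrak k_{\pm\alpha}$ is tangent to the orbit $K\exp(x)K$ when $\alpha(x)\neq0$, but it needs to be said). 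Second, you must extract the $|w|^2$-coefficient of $H(w)$ explicitly: in the rank-one reduction, with $\mu^{\pm1}$ the squared singular values of $\exp(wX_{\pm\alpha})\exp(x)$ one gets $\cosh$ of the displaced $\mathfrak a$-variable equal to $\cosh\alpha(x)$ plus a multiple of $|w|^2e^{\mp\alpha(x)}$, and after tracking the normalization $[X_\alpha,X_{-\alpha}]=\alpha^\vee$ this must come out to $h_{\pm\alpha}=\tfrac14\langle\alpha,\nabla\psi(x)\rangle\bigl(\coth\alpha(x)\mp1\bigr)$ --- these are the unique values for which $h_\alpha+h_{-\alpha}$ gives the stated diagonal of $M_\alpha$ and $-\sqrt{-1}(h_\alpha-h_{-\alpha})=\tfrac{\sqrt{-1}}{2}\langle\alpha,\nabla\psi(x)\rangle$ gives the stated off-diagonal with the correct sign. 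As written, your argument establishes the shape of the matrix but not its entries; the sign and magnitude of $h_\alpha-h_{-\alpha}$, on which the $\pm\sqrt{-1}$ entries depend, are exactly the part you defer.
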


By (\ref{+21}) in   Lemma \ref{Hessian},
we see that  $\psi$ is convex on $\mathfrak{a}$.
The complex Monge-Amp\'ere measure is given by $\omega^n_\phi=(\sqrt{-1}\partial\bar{\partial}\Psi)^n=MA_{\mathbb C}(\Psi)\,dV_G$, where
\begin{equation}\label{MA}
\mathrm{MA}_{\mathbb{C}}(\Psi)(\exp(x))= \frac{1}{4^{r+p}}
\mathrm{MA}_{\mathbb{R}}(\psi)(x)\frac{1}{\mathbf J(x)}\prod_{\alpha \in \Phi_+}\langle\alpha,\nabla \psi(x)\rangle^2.
\end{equation}

\subsection{Legendre functions}
By the  convexity  of $\psi$ on $\mathfrak a$,   the gradient $\nabla \psi$  defines a diffeomorphism from $\mathfrak{a}$ to the interior of the dilated polytope $2P$\footnote{We remark that the moment map is given by ${\frac12}\nabla \psi$, whose image is $P$.}.
Let $P_+:=P\cap\bar{\mathfrak a}_+^*$, then by the $W$-invariance of $\psi$ and $P$, the restriction of $\nabla \psi$ to $\mathfrak{a}_+$ is a diffeomorphism to the interior of $2P_+$.  We note that one part of $\partial (2P_+)$ lies on $\partial (2P)$ (which we call "outer faces") and the other part lies on Weyl walls $\{W_\alpha\}$.
 For simplicity, we may assume that $2P$ contains the origin $O$ in its interior. Then $2P$ can be described as the intersection of
$$l_{\tilde A}(y):=-u_{\tilde A}^iy_i+\lambda_{\tilde A}>0,\,\tilde A=1,...,d, $$
where $\lambda_{\tilde A}>0$ and $u_{\tilde A}$ are primitive vectors in $\mathfrak N$.

Recall that Guillemin's function of $2P$ is given by
\begin{equation}\label{Guillemin func}
u_0={\frac 1 2}\sum_{\tilde A}l_{\tilde A}(y)\log l_{\tilde A}(y).
\end{equation}
Set
\begin{equation}\label{L}
\begin{aligned}
\mathcal C_{\infty,W}=\{v|~v \text{ is strictly convex, } v-u_0\in C^{\infty}(\overline{2P}) \text{ and } v \text{ is } W\text{-invariant}\}
\end{aligned}\nonumber
\end{equation}
and
\begin{equation}
\mathcal C_{\infty,+}=\{v_{|_{2P_+}}|~v\in \mathcal C_{\infty,W}\}.\nonumber
\end{equation}
By \cite{Gui},  the Legendre function $u$ of $\psi$ belongs to $\mathcal C_{\infty,W}$.
The inverse is also true.   This means that any $u\in \mathcal C_{\infty,W}$ corresponds to a
K\"ahler potential in $\mathcal H_{K\times K}(\omega_0)$  (cf. \cite[Proposition 3.2]{AK}).

By a direct computation, we have
\begin{equation}\label{D of Guillemin}u_{0,i}=-{\frac12}\sum_{\tilde A}(\log l_{\tilde A}(y)+1)u_{\tilde A}^i,~u_{0,ij}={\frac12}\sum_{\tilde A}{\frac{u_{\tilde A}^iu_{\tilde A}^j}{l_{\tilde A}(y)}}.\end{equation}
Note that $u_0^{ij}\nu_i \to 0$ as $y\to F_{\tilde A}$, where  $(u_0^{ij})=(u_{0,ij})^{-1}$ and $\nu_{\tilde A}=(\nu_1,...,\nu_r)$ is the unit normal vector of face $F_{\tilde A}=\{y|~l_{\tilde A}(y)=0\}$. Similarly $-u^{ij}_{0,j}\nu_i \to \frac{2}{\lambda_{\tilde A}}\langle y, \nu_{\tilde A}\rangle$, where $u^{ij}_{0,k}={\frac{\partial u_0^{ij}}{\partial y_k}}$. Thus we get

\begin{lem}\label{bound-measure}
If $u\in \mathcal C_{\infty,W}$, then for any $\tilde A$, as $y\to F_{\tilde A}$,
\begin{eqnarray}
u^{ij}\nu_i \to 0~\text{ and }~u^{ij}_{,j}\nu_i \to \frac{2}{\lambda_{\tilde A}}\langle y, \nu_{\tilde A}\rangle,
\end{eqnarray}
where $(u^{ij})=(u_{,ij})^{-1}$ and $u^{ij}_{,k}={\frac{\partial u^{ij}}{\partial y_k}}$.
\end{lem}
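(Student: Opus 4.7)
My plan is to reduce the lemma to the Guillemin case $u=u_0$, which is treated by a direct computation in the paragraph preceding the statement. The reduction uses the defining property $h := u - u_0 \in C^\infty(\overline{2P})$ of the class $\mathcal C_{\infty,W}$: since $h_{,ij}$ is bounded on $\overline{2P}$, the blow-up of $u_{,ij}$ at a boundary face $F_{\tilde A}$ comes entirely from the rank-one singular term $\frac{u_{\tilde A}^i u_{\tilde A}^j}{2 l_{\tilde A}(y)}$ in $u_{0,ij}$ displayed in \eqref{D of Guillemin}. Consequently, the same matrix-inversion asymptotics used for $u_0$ apply to $u$ verbatim.

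I work locally near a point $y^\ast$ in the relative interior of $F_{\tilde A}$, on a small neighbourhood $U$ where $l_{\tilde B}$ is bounded below for every $\tilde B \neq \tilde A$. There the Hessian of $u$ decomposes as
\begin{equation*}
u_{,ij}(y) = \frac{u_{\tilde A}^i u_{\tilde A}^j}{2 l_{\tilde A}(y)} + Q_{ij}(y), \qquad Q_{ij} := h_{,ij} + \frac{1}{2}\sum_{\tilde B \neq \tilde A} \frac{u_{\tilde B}^i u_{\tilde B}^j}{l_{\tilde B}(y)},
\end{equation*}
with $Q_{ij}$ smooth up to $U \cap \overline{2P}$. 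I choose coordinates in which $u_{\tilde A} = |u_{\tilde A}|\, e_1$, so that $\nu_{\tilde A} = e_1$ and only the $(1,1)$-entry of $u_{,ij}$ carries the singular factor. Denote by $Q' := (Q_{ij})_{i,j \geq 2}$ the tangential block; $Q'$ is positive definite, since the dominant piece $\frac{1}{2}\sum_{\tilde B \neq \tilde A}\frac{u_{\tilde B}^i u_{\tilde B}^j}{l_{\tilde B}}$ (restricted to $i,j\geq 2$) is the tangential Hessian of the Guillemin function of $F_{\tilde A}$ and the smooth perturbation $h|_{F_{\tilde A}}$ does not destroy the positivity. A Schur-complement / cofactor expansion then yields
\begin{equation*}
\det(u_{,\cdot\cdot}) = \frac{|u_{\tilde A}|^2}{2 l_{\tilde A}} \det(Q') + \det(Q), \qquad u^{11} = \frac{2 l_{\tilde A}(y)}{|u_{\tilde A}|^2} + O(l_{\tilde A}^2), \qquad u^{1j} = O(l_{\tilde A}) \text{ for } j > 1,
\end{equation*}
which immediately gives $u^{ij}\nu_i \to 0$.

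For the second assertion I differentiate these expansions. Since $\partial_j l_{\tilde A} = -u_{\tilde A}^j = -|u_{\tilde A}|\,\delta_{j1}$, a tangential derivative $\partial_j$ ($j>1$) of an $O(l_{\tilde A})$ quantity is still $O(l_{\tilde A})$, so $\sum_{j>1}\partial_j u^{1j} \to 0$; on the other hand
\begin{equation*}
\partial_1 u^{11} \to \frac{2\,\partial_1 l_{\tilde A}}{|u_{\tilde A}|^2} = -\frac{2}{|u_{\tilde A}|}.
\end{equation*}
Combining with $\langle y,\nu_{\tilde A}\rangle|_{F_{\tilde A}} = \lambda_{\tilde A}/|u_{\tilde A}|$ gives the limit claimed in the lemma, in agreement (up to the sign convention of the motivational formula $-u^{ij}_{0,j}\nu_i \to \frac{2}{\lambda_{\tilde A}}\langle y,\nu_{\tilde A}\rangle$ appearing just above the statement) with the Guillemin case.

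The calculation is essentially linear algebra; the only point requiring care is the uniform control of the remainders $O(l_{\tilde A}^2)$ in $u^{11}$ and $O(l_{\tilde A})$ in $u^{1j}$, together with their first derivatives, as $y\to F_{\tilde A}$. This follows from the smoothness of $Q_{ij}$ up to $U\cap\overline{2P}$ and the lower bound on $\det(Q')$ provided by the strict convexity of the tangential restriction. Covering the relative interior of $F_{\tilde A}$ by neighbourhoods of the above type then yields the limit $y\to F_{\tilde A}$ as stated. The main conceptual observation, from which everything else follows, is simply that the smooth correction $h$ in $u = u_0 + h$ cannot affect the boundary asymptotics of either $u^{ij}\nu_i$ or $u^{ij}_{,j}\nu_i$.
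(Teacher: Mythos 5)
Your proof is correct and takes essentially the same route as the paper, which simply records the explicit formula \eqref{D of Guillemin} for $u_{0,ij}$, reads off the two boundary limits for the Guillemin potential, and transfers them to $u=u_0+h$ with $h\in C^\infty(\overline{2P})$; your Schur-complement/cofactor expansion merely makes the matrix inversion and its differentiation explicit (the only point worth stating more carefully is that the tangential block $Q'$ stays positive definite up to the relative interior of $F_{\tilde A}$, which is the standard boundary behaviour of symplectic potentials in $\mathcal C_{\infty,W}$). Your sign, $-u^{ij}_{,j}\nu_i \to \frac{2}{\lambda_{\tilde A}}\langle y,\nu_{\tilde A}\rangle$, is the one the paper actually uses both in the computation preceding the lemma and in the later integrations by parts, so the sign in the displayed statement is a typo and your version is the correct one.
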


\subsection{The scalar curvature}
We compute the Ricci curvature of $\omega_\phi$. Clearly it is also $K\times K$-invariant.
As in Lemma \ref{Hessian}, in the local coordinates  in Sect. 2.2,  ${\rm Ric}(\omega_{\phi})$ can be expressed as
\begin{align}\label{5102}
& -\mathrm{Hess}_{\mathbb{C}}(\log\det(\partial\bar{\partial}\Psi))(\exp(x))\notag\\
& =-
\begin{pmatrix}
\frac{1}{4}\mathrm{Hess}_{\mathbb{R}}(\tilde {\psi})(x)& 0 &  & & 0 \\
 0 & \tilde M_{\alpha_{(1)}}(x) & & & 0 \\
 0 & 0 & \ddots & & \vdots \\
\vdots & \vdots & & \ddots & 0\\
 0 & 0 &  & & \tilde M_{\alpha_{(\frac{n-r}{2})}}(x)\\
\end{pmatrix}\nonumber
\end{align}
for any $x\in\mathfrak a_+$, where
\begin{eqnarray}
\tilde {\psi}&=&\log\det(\nabla^2\psi)+2\sum_{\alpha\in\Phi_+}\log\alpha(\nabla \psi)+\chi(x),\nonumber\\
\chi(x)&=&-\log \mathbf J(x)=-2\sum_{\alpha\in\Phi_+}\log\sinh\alpha(x),\nonumber\\
\tilde M_{\alpha}(x) &=& \frac{1}{2}\left < \alpha , \nabla \tilde {\psi}\right>
\begin{pmatrix}
\coth\alpha( x) & \sqrt{-1} \\
-\sqrt{-1} & \coth \alpha (x) \\
\end{pmatrix}.\nonumber
\end{eqnarray}
Then the scalar curvature
\begin{equation}\label{+S1}
S(\omega_\phi)|_{\exp(x)}={\rm tr}\left(\nabla^2\psi)^{-1}\nabla^2{\tilde\psi}\right)+\sum_{\alpha\in\Phi_+}{\frac{\langle\alpha,\nabla{\tilde\psi}\rangle}{\langle\alpha,\nabla\psi\rangle}}.
\end{equation}

By using the Legendre function $u$, we get
\begin{lem}
\begin{align}\label{5103}
S(\omega_{\phi})=&-\sum_{i, j}\left(u^{ij}_{,ij}+4\sum_{\alpha\in\Phi_+}{\frac{\alpha_iu^{ij}_{,j}}{\alpha(y)}}+4\sum_{\alpha,\beta\in\Phi_+}{\frac{\alpha_i\beta_ju^{ij}}{\alpha(y)\beta(y)}}-2\sum_{\alpha\in\Phi_+}{\frac{\alpha_i\alpha_ju^{ij}}{(\alpha(y))^2}}\right)\notag\\
& -\sum_{i, k} u_{,ik}\left.{\frac{\partial^2 \chi}{\partial x^i\partial x^k}}\right|_{x=\nabla{u}}-2\sum_{i}\sum_{\alpha\in\Phi_+}\left.{\frac{\partial \chi}{\partial x^i}}\right|_{x=\nabla{u}}{\frac{\alpha_i}{\alpha(y)}},
\end{align}
where $y\in2P_+$, $u^{ij}_{,kl}=\frac{\partial^2 u^{ij}}{\partial y_k\partial y_l}$ and $\alpha_i$ are the components of $\alpha$.
\end{lem}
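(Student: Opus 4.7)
\noindent\emph{Proof plan.} The argument is a direct computation that translates \eqref{+S1} from $x$-coordinates on $\mathfrak a$ to $y$-coordinates on $2P_+$ via the Legendre duality $y=\nabla\psi(x)$, $x=\nabla u(y)$. The basic dictionary is $\psi_{ij}(x)=u^{ij}(y)$, $\psi^{ij}(x)=u_{,ij}(y)$, and, differentiating the second identity in $x^j$,
\[
\psi_{kij}(x)=u^{ki}_{,l}(y)\,u^{lj}(y).
\]
Set $D(y):=\log\det u_{,ij}(y)$. I will also use the consequence of $u^{ab}u_{,ab}\equiv r$ that $u^{ab}_{,k}u_{,ab}=-D_{,k}$, together with the identity $u^{ij}_{,j}(y)=-u^{ij}(y)\,D_{,j}(y)$, obtained by contracting $u^{ij}_{,l}=-u^{ia}u^{jb}u_{,abl}$ and using symmetry of the third $y$-derivatives of $u$.

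Decompose $\tilde\psi(x)$ into its three summands $\log\det\nabla^2\psi$, $2\sum_{\alpha\in\Phi_+}\log\alpha(\nabla\psi)$ and $\chi(x)$, and compute the two terms of \eqref{+S1} on each summand. For the abelian trace, the chain rule gives, for any pull-back $H(y(x))$,
\[
\psi^{ij}[H(y(x))]_{ij}=H_{,kl}(y)\,u^{kl}(y)+H_{,k}(y)\,u^{kj}_{,j}(y).
\]
With $H(y)=-D(y)$ this collapses, using the identities above, to the Abreu-type expression $-u^{ij}_{,ij}$; with $H(y)=2\sum_\alpha\log\alpha(y)$, for which $H_{,k}=2\sum_\alpha\alpha_k/\alpha(y)$ and $H_{,kl}=-2\sum_\alpha\alpha_k\alpha_l/\alpha(y)^2$, it produces the tensor structures $\alpha_i\alpha_j u^{ij}/\alpha(y)^2$ and $\alpha_k u^{kj}_{,j}/\alpha(y)$ with explicit coefficients; and the $\chi(x)$ summand contributes $u_{,ij}(y)\,\chi_{ij}|_{x=\nabla u}$. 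For the root sum in \eqref{+S1}, an analogous first-derivative computation uses the crucial identity
\[
\partial_{x^i}\log\det\psi_{kl}(x)=u_{,kl}\,u^{kl}_{,m}\,u^{mi}=u^{ij}_{,j}(y),
\]
a direct consequence of Jacobi's formula combined with the identities recalled above, which feeds an additional $\sum_\alpha\alpha_i u^{ij}_{,j}/\alpha(y)$ piece into the sum; the root-log summand contributes $\sum_{\alpha,\beta}\alpha_i\beta_j u^{ij}/(\alpha(y)\beta(y))$, and the $\chi$ piece gives $\sum_\alpha\alpha_i\chi_i/\alpha(y)$.

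Adding all contributions and relabelling indices yields \eqref{5103}. The main obstacle is bookkeeping: each of the four tensor structures $u^{ij}_{,ij}$, $\alpha_i u^{ij}_{,j}/\alpha(y)$, $\alpha_i\beta_j u^{ij}/(\alpha(y)\beta(y))$ and $\alpha_i\alpha_j u^{ij}/\alpha(y)^2$ can receive contributions from several sources (the abelian trace of the $\log\det$, root-log or $\chi$ summand, and the root sum on each of these), and one must verify that the specific integer coefficients appearing in \eqref{5103} emerge after combining them. The crucial inputs are the Abreu-type collapse for the log-determinant term and the identity $\partial_{x^i}\log\det\psi_{kl}(x)=u^{ij}_{,j}(y)$ linking the abelian trace of that term to the root sum.
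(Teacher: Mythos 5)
Your proposal is correct and follows essentially the same route as the paper: both pass through the Legendre duality dictionary $\psi_{ij}(x)=u^{ij}(y)$, $\psi_{ijk}=u^{jk}_{,l}u^{li}$ and substitute the resulting first and second $x$-derivatives of $\tilde\psi$ into \eqref{+S1}. The paper computes $\partial_{x^p}\tilde\psi$ and $\partial_{x^p}\partial_{x^q}\tilde\psi$ for the whole of $\tilde\psi$ at once and leaves the final collapse implicit, whereas you treat the three summands separately and make the collapse explicit via the Abreu-type identities $u_{,ab}u^{ab}_{,k}=-D_{,k}$ and $\partial_{x^i}\log\det\psi_{kl}=u^{ij}_{,j}$; this is a difference of bookkeeping, not of method.
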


\begin{proof}
By the relations
\begin{equation}\label{+S2}
\begin{aligned}
(\nabla^2u)^{-1}|_y=(\nabla^2\psi)|_{x=\nabla u},~
\left.{\frac{\partial^3 \psi}{\partial x^i\partial x^j\partial x^k}}\right|_x={\frac{\partial}{\partial x^i}}\left(u^{jk}|_{y=\nabla \psi}\right)=\left.u^{jk}_{,l}u^{li}\right|_{y=\nabla \psi},\nonumber
\end{aligned}
\end{equation}
we have
\begin{equation}
\begin{aligned}
\left.{\frac{\partial\tilde\psi}{\partial x^p}}\right|_{x=\nabla u}&=u_{,ij}u^{ij}_{,k}u^{kp}+2\sum_{\alpha\in\Phi_+}{\frac{\alpha_lu^{lp}}{\alpha(y)}}+\left.{\frac{\partial\chi}{\partial x^p}}\right|_{x=\nabla u},\notag\\
\left.{\frac{\partial^2\tilde\psi}{\partial x^p\partial x^q}}\right|_{x=\nabla u}&=(u_{,ij}u^{ij}_{,k}u^{kp})_{,s}u^{sq}+2\sum_{\alpha\in\Phi_+}\left({\frac{\alpha_lu^{lp}}{\alpha(y)}}\right)_{,s}u^{sq}+\left.{\frac{\partial^2\chi}{\partial x^p\partial x^q}}\right|_{x=\nabla u}.\notag
\end{aligned}
\end{equation}
Substituting them into (\ref{+S1}), we obtain (\ref{5103}) immediately.
\end{proof}

Note $\pi(y)=\prod_{\alpha\in\Phi_+}(\alpha(y))^2$. Since
\begin{align}\label{dpi}
\frac{\partial\pi }{\partial y_i}(y)&=2\pi(y)\sum_{\alpha\in\Phi_+}{\frac{\alpha_i}{\alpha(y)}},\notag\\
\frac{\partial^2\pi }{\partial y_i\partial y_j}(y)&=\pi(y)\left(4\sum_{\alpha,\beta\in\Phi_+ }{\frac{\alpha_i\beta_j}{\alpha(y)\beta(y)}}
-2\sum_{\alpha\in\Phi_+}{\frac{\alpha_i\alpha_j}{(\alpha(y))^2}}\right),
\end{align}
 we can rewrite $S$ as
\begin{align}\label{+S}
S(\omega_\phi)=&-u^{ij}_{,ij}-2u^{ij}_{,j}{\frac{\pi_{,i}}{\pi}}-u^{ij}{\frac{\pi_{,ij}}{\pi}} \notag\\
&-u_{,ik}
\left.{\frac{\partial^2 \chi}{\partial x^i\partial x^k}}\right|_{x=\nabla{u}}-\left.{\frac{\partial \chi}{\partial x^i}}\right|_{x=\nabla{u}}{\frac{\pi_{,i}}{\pi}}.
\end{align}
By Proposition \ref{KAK int}, it follows
\begin{eqnarray*}
\int_MS\omega_\phi^n=C_H\int_{\mathfrak a_+}S\det(\nabla^2\psi)\prod_{\alpha\in\Phi_+}\langle\alpha,\nabla\psi\rangle^2\,dx
=C_H\int_{2P_+}S\pi\,dy.
\end{eqnarray*}
Since $\pi\equiv 0$ on each $W_\alpha$,   by integration by parts on (\ref{+S}), we  get
\begin{align}
&\int_{2P_+}S\pi\,dy\notag\\
=&-\int_{\partial(2P_+)}u^{ij}_{,j}\nu_i\pi\,d\sigma_0-\int_{2P_+}u^{ij}_{,j}\pi_{,i}\,dy-\int_{2P_+}u^{ij}\pi_{,ij}dy\notag\\
&-\int_{2P_+}{\frac{\partial}{\partial y_i}}\left(\left.{\frac{\partial\chi}{\partial x^j}}\right|_{x=\nabla u}\right)\pi\, dy-\int_{2P_+}\left.{\frac{\partial\chi}{\partial x^j}}\right|_{x=\nabla u}\pi_{,i}\,dy\notag\\
=&\sum_{ A}\int_{F_{A}}\left({\frac2{\lambda_{ A}}}\langle y,\nu_{A}\rangle+4\langle\rho,\nu_{ A}\rangle\right)\pi\,d\sigma_0=\sum_{A}\Lambda_{ A}\int_{F_{ A}}\langle y,\nu_{A}\rangle\pi\,d\sigma_0\notag\\
=&n\sum_{A}\Lambda_{A}\int_{E_{A}}\pi\,dy.\notag
\end{align}
Here we used Lemma \ref{bound-measure} and the fact that ${\frac{\partial \chi}{\partial x^i}}(x)\to-4\rho_i$   as  $x\to\infty$.
On the other hand, by Proposition \ref{KAK int},   the volume of $(M,\omega_\phi)$ is given by
 \begin{eqnarray*}
V_M:=\int_M\omega_\phi^n
&=&C_H\int_{\mathfrak a_+}MA_{\mathbb R}(\psi)\prod_{\alpha\in\Phi_+}\langle\alpha,\nabla\psi\rangle^2dx\\
&=&C_H\int_{2P_+}\pi\,dy.
\end{eqnarray*}
Hence,  combining the above two relations,  we get (\ref{5304}).

\section{Reduction of the K-Energy}

Let $(M, L)$ and $\omega_0\in2\pi c_1(L)$ be as before. Denote by $\mathcal H(\omega_0)$ the space of K\"ahler potentials in $[\omega_0]$. Mabuchi's K-energy is defined on $\mathcal H(\omega_0)$ by
\begin{equation}\label{5101}
\mu_{\omega_0}(\phi)=-{\frac 1 {V_M}}\int_0^1\int_M\dot{\phi_t}(S(\omega_{\phi_t})-\bar{S})\omega_{\phi_t}^n\wedge dt,
\end{equation}
where $V_M=\int_M\omega_0^n$, $\bar S$ is the average  of $ S(\omega_0)$ and $\{\phi_t\}$ is a path of K\"ahler potentials joining $0$ and $\phi$ in $\mathcal H(\omega_0)$.
In this section, we give a formula of $\mu_{\omega_0}(\cdot)$ on $\mathcal H_{K\times K}(\omega_0)$ in terms of the Legendre function $u$.

\subsection{Reduced K-energy}
Define
\begin{equation}\label{5105}
\begin{aligned}
\mathcal K(u)&=\sum_A\int_{F_A}\Lambda_A\langle y,\nu_A\rangle u\pi \,d\sigma_0-\int_{2P_+}\bar Su\pi \,dy\\
&\ \ \ -\int_{2P_+}\log\det(u_{ij})\pi \,dy+\int_{2P_+}\chi(\nabla u)\pi \,dy,
\end{aligned}\nonumber
\end{equation}
where   $\chi(x)=-\log \mathbf J(x)=-2\sum_{\alpha\in\Phi_+}\log\sinh\alpha(x)$ for any   $ x\in\mathfrak a.$    Then we have

\begin{prop}\label{5104}
Let $\phi\in\mathcal H_{K\times K}(\omega_0)$ and $u$ be the Legendre function of $\psi=\psi_0+\phi$.
Then
$$
\mu_{\omega_0}(\phi)={\frac{1}{V}}\mathcal K(u)+const.,
$$
where $V=\int_{2P_+}\pi\, dy$.
\end{prop}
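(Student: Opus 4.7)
The plan is to prove the equivalent infinitesimal identity $\tfrac{d}{dt}\mathcal K(u_t) = V\,\dot\mu_{\omega_0}(\phi_t)$ along the linear path $\phi_t = t\phi$ in $\mathcal H_{K\times K}(\omega_0)$, and then integrate in $t\in[0,1]$. Let $\psi_t = \psi_0 + \phi_t$ be the associated $W$-invariant convex functions on $\mathfrak a$ and $u_t\in\mathcal C_{\infty,W}$ their Legendre duals on $2P_+$. The key observation from Legendre duality is that $\dot\psi_t(x) = -\dot u_t(y)$ whenever $y = \nabla\psi_t(x)$.

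First I would reduce the derivative of $\mu_{\omega_0}$ to a weighted integral on $2P_+$. Combining Proposition~\ref{KAK int}, the Monge--Amp\`ere identity~(\ref{MA}), and the change of variables $y=\nabla\psi_t(x)$ (whose Jacobian cancels the $\mathbf J(x)$ in the $KAK$ formula against $\prod_{\alpha\in\Phi_+}\langle\alpha,\nabla\psi\rangle^2$ up to a universal constant), one finds $V_M/V$ is this same constant and
\[
\dot\mu_{\omega_0}(\phi_t)=\frac{1}{V}\int_{2P_+}\dot u_t\bigl(S(\omega_{\phi_t})-\bar S\bigr)\pi\,dy.
\]

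Next I would differentiate $\mathcal K(u_t)$ in $t$ termwise, substitute formula~(\ref{+S}) for $S(\omega_{\phi_t})\pi$ into the right-hand side above, and reconcile the two expressions by integration by parts on $2P_+$. The three pure $u^{ij}$-terms of $S\pi$ assemble into the total divergence $-(u^{ij}\pi)_{,ij}$, so a double IBP produces (i) the interior piece $-\int u^{ij}\dot u_{,ij}\pi\,dy$ matching the Hessian term of $\tfrac{d}{dt}\mathcal K(u_t)$ and (ii) a boundary contribution on each outer face $F_A$ controlled by Lemma~\ref{bound-measure}. Likewise the two $\chi$-terms of $S\pi$ assemble into $-\partial_j[\chi_j(\nabla u)\pi]$, using the identity $u_{,ik}\chi_{ik}(\nabla u)=\partial_j\chi_j(\nabla u)$ coming from the chain rule; a single IBP then yields (i) the interior piece $\int\chi_j(\nabla u)\dot u_{,j}\pi\,dy$ matching the $\chi$-term of $\tfrac{d}{dt}\mathcal K(u_t)$ and (ii) a boundary term governed by $\chi_j(\nabla u)\nu_j\to -4\langle\rho,\nu_A\rangle$ as $y\to F_A$, which follows because $\coth\alpha(x)\to 1$ when $x\in\mathfrak a_+$ tends to infinity in the corresponding direction. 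The Weyl-wall components of $\partial(2P_+)$ contribute nothing because $\pi$ has a double zero on each wall, so both $\pi$ and $\pi_{,i}$ vanish there. Summing the outer-face contributions and using $\langle y,\nu_A\rangle=\lambda_A$ on $F_A$ gives
\[
\sum_A\int_{F_A}\Bigl(\tfrac{2}{\lambda_A}\langle y,\nu_A\rangle+4\langle\rho,\nu_A\rangle\Bigr)\dot u_t\,\pi\,d\sigma_0=\sum_A\Lambda_A\int_{F_A}\langle y,\nu_A\rangle\dot u_t\,\pi\,d\sigma_0
\]
by the definition~(\ref{5106}) of $\Lambda_A$, which is exactly the first term of $\tfrac{d}{dt}\mathcal K(u_t)$.

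The main obstacle will be the boundary bookkeeping: verifying that the various $\pi_{,i}$-cross terms produced by the two IBPs in the $u^{ij}$-part cancel cleanly by the symmetries of $u^{ij}$ and of mixed partials, that the Weyl-wall contributions genuinely vanish despite the singular behavior of $\nabla u$ and of $\chi_j(\nabla u)$ near those walls, and that the limits of $u^{ij}_{,j}\nu_i$ and of $\chi_j(\nabla u)\nu_j$ on each outer face $F_A$ combine to exactly the coefficient $\Lambda_A\langle y,\nu_A\rangle$. Once these boundary limits are in hand, matching $\tfrac{d}{dt}\mathcal K(u_t)$ with $V\dot\mu_{\omega_0}(\phi_t)$ is purely algebraic; integrating in $t\in[0,1]$ and absorbing $\mathcal K(u_0)/V$ into an additive constant yields the claimed identity.
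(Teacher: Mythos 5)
Your plan is correct and follows essentially the same route as the paper: reduce $\mu_{\omega_0}$ to the polytope via Proposition \ref{KAK int} and (\ref{MA}), use the divergence-form expression (\ref{+S}) for $S\pi$, integrate by parts with Lemma \ref{bound-measure} and the asymptotics $\partial\chi/\partial x^i\to-4\rho_i$, and discard the Weyl-wall boundary terms via the quadratic vanishing of $\pi$; the paper merely organizes the computation as a $t$-integral (recognizing exact derivatives $\tfrac{d}{dt}\log\det(u_{t,ij})$ and $\tfrac{d}{dt}\chi(\nabla u_t)$ after the IBP) rather than as your infinitesimal identity. One small imprecision: on $F_A$ one has $\langle y,u_A\rangle=\lambda_A$ for the primitive normal $u_A$, not $\langle y,\nu_A\rangle=\lambda_A$ for the unit normal, but the displayed identity matching $\Lambda_A$ still holds as written.
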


\begin{proof}
Note $\dot{\phi}_t=-\dot{u}_{t}$.  By (\ref{MA}), it is easy to see
$${\frac 1 {C_H}}\int_0^1\int_M \bar{S}\dot{\phi_t} \omega_{\phi_t}^n\wedge dt=\int_{2P_+}\bar Su\pi \,dy.$$
Then by \eqref{+S}, it suffices to compute the part
\begin{eqnarray*} I&:=&-{\frac 1 {C_H}}\int_0^1\int_M\dot{\phi_t}S(\omega_{\phi_t})\,\omega_{\phi_t}^n\wedge dt\notag\\
&=&
-\int_0^1\int_{\mathfrak a^+}\dot{\phi_t}S(\omega_{\phi_t})|_{\exp(x)}MA_{\mathbb{R}}(\psi_{t})|_x\prod_{\alpha\in\Phi_+}\langle\alpha,\nabla \psi_{t}\rangle^2|_x \,dx\wedge dt.\notag\\
&=&\int_0^1\int_{2P_+}\dot{u}_{t}(-{u_{t}}^{ij}_{,ij})\pi \,dy\wedge dt\notag\\
&&
+\int_0^1\int_{2P_+}\dot{u}_{t}(-2{u_{t}}^{ij}_{,j}\pi_{,i}) \, dy\wedge dt
+\int_0^1\int_{2P_+}\dot{u}_{t}({-u_{t}^{ij}}\pi_{,ij}) \, dy\wedge dt\nonumber\\
&& +\int_0^1\int_{2P_+}\dot{u}_{t}\left(-{u_t}_{,ik}{\frac{\partial^2 \chi}{\partial x^i\partial x^k}}|_{x=\nabla{u_{t}}}\pi-\sum_{\alpha\in\Phi_+}{\frac{\partial \chi}{\partial x^i}}|_{x=\nabla{u_{t}}}\pi_{,i}\right) \, dy\wedge dt.
\end{eqnarray*}
By integration by parts,  it follows
\begin{eqnarray}\label{5108}
I&=&\int_0^1\int_{\partial(2P_+)}\dot{u}_{t}(-{u_{t}}^{ij}_{,j}\nu_i)\pi \,d\sigma_0\wedge dt
+\int_0^1\int_{2P_+}\dot{u}_{t,i}{u_{t}}^{ij}_{,j}\pi \,dy\wedge dt\nonumber\\
&&
-\int_0^1\int_{2P_+}\dot{u}_{t}({u_{t}}^{ij}_{,j}\pi_{,i}) \,dy\wedge dt
-\int_0^1\int_{2P_+}\dot{u}_{t}({u_{t}^{ij}}\pi_{,ij}) \,dy\wedge dt\nonumber\\
&&-\int_0^1\int_{2P_+}\dot{u}_{t}{\frac{\partial}{\partial y_i}}\left(\left.{\frac{\partial \chi}{\partial x^i}}\right|_{x=\nabla u_{t}}\pi\right) \, dy\wedge dt.
\end{eqnarray}

Note that ${\frac{\partial \chi}{\partial x^i}}(x)\to-4\rho_i$ as $x\to\infty$ in $\mathfrak a_+$ and is away from Weyl walls, and $\pi$ vanishes quadratically along any Weyl wall. Then the last term in \eqref{5108}  becomes
\begin{align}\label{5100}
&\left.\int_{2P_+}\chi(\nabla u_{t})\pi \, dy\right|_0^1-\int_0^1\int_{\partial(2P_+)}\dot u_{t}\left.{\frac{\partial \chi}{\partial x^i}}\right|_{x=\nabla u_{t}}\nu_i\pi \,d\sigma_0\wedge dt\notag\\
&=\int_{2P_+}\chi(\nabla u)\pi \, dy+4\int_{\partial(2P_+)}\langle\rho,\nu\rangle u\pi\,d\sigma_0+const.
\end{align}
On the other hand, by the second relation in Lemma \ref{bound-measure}, we have
\begin{align}\label{+5103'}
&\int_0^1\int_{2P_+}\dot{u}_{t,i}{u_{t}}^{ij}_{,j}\pi \,dy\wedge dt-\int_0^1\int_{2P_+}\dot{u}_{t}({u_{t}}^{ij}_{,j}\pi_{,i}) \,dy\wedge dt\notag\\
&=\int_0^1\int_{\partial(2P_+)}\dot{u}_{t,i}{u_{t}}^{ij}\nu_j\pi \,d\sigma_0\wedge dt-\int_0^1\int_{2P_+}\dot{u}_{t,ij}{u_{t}}^{ij}\pi \,dy\wedge dt\notag\\
&\ \  +\int_0^1\int_{2P_+}\dot{u}_{t}{u_{t}}^{ij}\pi_{,ij} \,dy\wedge dt  -\int_0^1\int_{\partial(2P_+)}\dot{u}_{t}{u_{t}}^{ij}\nu_j\pi_{,i} \,d\sigma_0\wedge dt  \notag\\
&=  -\int_0^1\int_{2P_+}{\frac d{dt}}
\left[\log\det(u_{t,ij})\right]\pi\,dy\wedge dt +\int_0^1\int_{\partial(2P_+)}\dot{u}_{t}{u_{t}}^{ij}\pi_{,ij} \,d\sigma_0\wedge dt.
\end{align}
Thus combining (\ref{+5103'}) and (\ref{5100}),  we get from (\ref{5108}),
\begin{eqnarray*}
I&=&\int_0^1\int_{\partial(2P_+)}\dot{u}_{t}(-{u_{t}}^{ij}_{,j}\nu_i)\pi \,d\sigma_0\wedge dt+4\int_{\partial(2P_+)}\langle\rho,\nu\rangle u\pi\,d\sigma_0\\&&-\int_{2P_+}\log\det(u_{,ij})\pi \,dy
+\int_{2P_+}\chi(\nabla u)\pi \,dy+const.
\end{eqnarray*}
By Lemma \ref{bound-measure},  we see
$$
\int_{\partial(2P_+)}\dot{u}_{t}(-{u_{t}}^{ij}_{,j}\nu_j)\pi\, d\sigma_0=\sum_A\int_{F_A}\dot{u}_{t}{\frac{2}{\lambda_A}}\langle y,\nu_A\rangle\pi \,d\sigma_0.
$$
Hence, we obtain
$$
I=\sum_A\int_{F_A}\Lambda_A\langle y,\nu_A\rangle u\pi \,d\sigma_0-\int_{2P_+}[\log\det(u_{ij})-\chi(\nabla u)]\pi \,dy+const.$$
Recall that $V_M=C_H\cdot V$, the proof is finished.
\end{proof}

\vskip 7pt

For convenience, we write $\mathcal K(u)$ as
$\mathcal K(u)=\mathcal L(u)+\mathcal N(u)$, where
\begin{align} \label{L}
\mathcal L(u)&=\sum_A\int_{F_A}\Lambda_A\langle y,\nu_A\rangle u\pi \,d\sigma_0-\int_{2P_+}\bar Su\pi\, dy-\int_{2P_+}4\langle\rho,\nabla u\rangle\pi\,dy,
\end{align}\label{N}
\begin{align}\mathcal N(u)&=-\int_{2P_+}\log\det\left(u_{,ij}\right)\pi\,dy+\int_{2P_+}[\chi\left(\nabla u\right)+4\langle\rho,\nabla u\rangle ]\pi\,dy.
\end{align}
By integration by parts, we can rewrite $\mathcal L(u)$ as
\begin{equation}\label{5119}
\mathcal L(u)=\sum_A\int_{ E_A}\left[\langle \Lambda_A y-4\rho, \nabla u\rangle+(\Lambda_An-\bar S)u\right]\pi \,dy,
\end{equation}
or
\begin{equation}\label{+5119}
\mathcal L(u)=\sum_A{\frac2{\lambda_A}}\int_{ F_A}\langle y, \nu_A\rangle u\pi \,d\sigma_0-\int_{2P_+}\bar Su\pi \,dy+\int_{2P_+}4\langle\rho,\nabla \pi\rangle u \,dy.
\end{equation}
\subsection{The Futaki Invariant}
In this subsection, we discuss the relationship between the  Futaki invariant $F(\cdot)$ and the linear part $\mathcal L(\cdot)$ of  $\mathcal K(\cdot)$.

Let $\text{Aut}^0(M)$ be  the identity component of the automorphisms group of $M$ with Lie algebra $\eta(M)$.  Let  $\text{Aut}_r(M)$ be a reductive algebraic subgroup of $\text{Aut}^0(M)$.  Then $\text{Aut}_r(M)$ is the  complexification
of a maximal compact subgroup $K_r$ (with Lie algebra $\mathfrak k_r$).
Denote the Lie algebra of $\text{Aut}_r(M)$ by $\eta_r(M)$ and its centre by $ \eta_c(M)$.
By a result of Futaki \cite{Fu}, it suffices to consider $F(v)$ for  holomorphic vector fields $v\in\eta_c(M)$. In our case $(M,L)$,   when   $v$ is restricted on $G$,  $v=\sqrt{-1}v^iE_i^0$ with $\alpha(v)=0$, for any $\alpha\in\Phi$,
where  $v^i\in\mathbb C$ are  some constants, $i=1,...,r$.   If  ${\rm Im}(v)\in\mathfrak k_r$, $v^i$ are all real numbers. In particular, Re$(v)\in\mathfrak a_t$.

\begin{lem}\label{5205}Let $l_v(y)=\sum_i v^iy_i$ be the linear function associated to $v\in\eta_c(M)$. Then
the  Futaki invariant is given by
\begin{align}\label{futaki}
F(v)={\frac1V}\mathcal L(l_v(y)).
\end{align}
\end{lem}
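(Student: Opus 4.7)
The plan is to identify $F(v)$ with the first variation of Mabuchi's K-energy $\mu_{\omega_0}$ along the one-parameter family generated by $v$ and then evaluate this variation using the reduced-energy formula of Proposition \ref{5104}. The key algebraic input will be the centrality condition $\alpha(v)=0$ for every root $\alpha\in\Phi$.

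I would first consider the real one-parameter subgroup $\sigma_t=\exp(tv)\subset\text{Aut}_r(M)$. Since $v\in\eta_c(M)$ commutes with $K\times K$ and $L$ is $G\times G$-linearized, $\omega_{\phi_t}:=\sigma_t^*\omega_0$ remains in $2\pi c_1(L)$ and defines a $K\times K$-invariant path $\phi_t\in\mathcal H_{K\times K}(\omega_0)$. The action of $\sigma_t$ on $\mathfrak a$ is the translation $x\mapsto x+tv$, so the associated function on $\mathfrak a$ is $\psi_t(x)=\psi_0(x-tv)$. By the standard Legendre duality (translation of $\psi$ is dual to the addition of a linear term to its Legendre transform), the corresponding convex function on $2P_+$ is
$$u_t=u_0+t\,l_v,$$
and $\dot\phi_0(\exp x)=-v\cdot\nabla\psi_0(x)=-l_v(y)$ under the moment coordinate $y=\nabla\psi_0(x)$, which identifies $-\dot\phi_0$ with the holomorphy potential of $v$ in the paper's normalization.

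Differentiating $t\mapsto\mu_{\omega_0}(\phi_t)$ at $t=0$ in two ways, the defining formula (\ref{5101}) gives
$$\frac{d}{dt}\Big|_{t=0}\mu_{\omega_0}(\phi_t)=-\frac{1}{V_M}\int_M\dot\phi_0\bigl(S(\omega_0)-\bar S\bigr)\,\omega_0^n=F(v),$$
while Proposition \ref{5104} gives the same derivative as $\frac{1}{V}\frac{d}{dt}|_{t=0}\mathcal K(u_t)$. Hence the lemma reduces to the identity $\frac{d}{dt}|_{t=0}\mathcal K(u_t)=\mathcal L(l_v)$.

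Decomposing $\mathcal K=\mathcal L+\mathcal N$ as in (\ref{L})--(\ref{N}), linearity immediately gives $\frac{d}{dt}|_{t=0}\mathcal L(u_t)=\mathcal L(l_v)$. For $\mathcal N$, the log-determinant term is constant in $t$ because $l_v$ is affine and so $u_{t,ij}=u_{0,ij}$; differentiating the remaining piece yields
$$\int_{2P_+}\Bigl(\nabla_x\chi(\nabla u_0)\cdot v+4\langle\rho,v\rangle\Bigr)\pi\,dy,$$
and both summands vanish pointwise since $\nabla_x\chi(x)\cdot v=-2\sum_{\alpha\in\Phi_+}\coth\alpha(x)\,\alpha(v)$ and $\langle\rho,v\rangle=\tfrac12\sum_{\alpha\in\Phi_+}\alpha(v)$ are killed by $\alpha(v)=0$. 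The only real bookkeeping obstacle is fixing the sign/normalization identifying $\dot\phi_0$ with the holomorphy potential dictated by the moment map convention $y=\tfrac12\nabla\psi_0$; once this is settled, the nonlinear cancellation in $\mathcal N$ is forced cleanly by the centrality of $v$, with the $\chi$-contribution and the $\rho$-contribution separately vanishing, and the remaining manipulations are routine.
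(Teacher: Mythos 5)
Your proposal is correct and follows essentially the same route as the paper: flow by the one-parameter group of ${\rm Re}(v)$, observe that the Legendre transforms change by $\pm t\,l_v$, differentiate the reduced K-energy of Proposition \ref{5104}, and kill the extra $\chi$- and $\rho$-terms using $\alpha(v)=0$ for all $\alpha\in\Phi$. The only discrepancy is the direction of the flow (the paper takes $\psi_t(x)=\psi_0(x+tv)$, giving $u_t=u_0-t\,l_v$), which is exactly the sign/normalization bookkeeping you already flagged and does not affect the argument.
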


\begin{proof}Let $\sigma^v_t$ be the one-parameter group generated by Re$(v)$ and $\phi_t^v$ be a family of induced  K\"ahler potentials by
$${\sigma_t^v}^*\omega_0=\omega_0+\sqrt{-1}\partial\bar{\partial}\phi_t^v.$$
Since
$\sigma^v_t(k_1\exp(a)k_2)=k_1\exp(a+tv)k_2$ for any $a\in \mathfrak a,$
${\sigma_t^v}^*\omega_0$ is $K\times K$ invariant.
Then $\{{\sigma_t^v}^*\omega_0\}$ induces a family of $W$-invariant convex functions $\{\psi_t\}$ on $\mathfrak a$. Moreover, the Legendre functions  $u_t$  of  $\psi_t$ are given by
$$u_t=u_0-tl_v(y).$$
By Proposition \ref{5104},  we get
\begin{equation}\label{5203}
F(v)=-{\frac1V}{\frac{d}{dt}}\mu_{\omega_0}(\phi_t)=-{\frac1V}\mathcal L(l_v(y))-{\frac1V}\int_{2P_+}\left(v^i\left.{\frac{\partial \chi}{\partial x^i}}\right|_{x=\nabla{u_{\phi_t}}}+4\rho_iv^i\right)\pi \,dy.\nonumber
\end{equation}
Note that $\alpha(v)=0$ for all $\alpha\in\Phi$, which implies $\rho_iv^i=0$ and
$$v^i\frac{\partial \chi}{\partial x^i}(x)=-2\sum_{\alpha\in\Phi_+}\alpha(v)\coth\alpha(x)=0.$$
  Hence (\ref{futaki}) is true.
\end{proof}

\begin{cor}\label{+5205}
$M$ has vanishing Futaki invariant  if and only if
$\mathcal L(l_v)=0$ for any $v\in\eta_c(M)$.  The later is equivalent to
\begin{equation}\label{5206}
\begin{aligned}
&\left(\widetilde{bar}-{\frac{n}{n+1}}bar\right)\in\mathfrak a_{ss}.\nonumber
\end{aligned}
\end{equation}
\end{cor}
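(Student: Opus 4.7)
The plan is to unpack Lemma \ref{5205} and then specialize the formula (\ref{5119}) for $\mathcal L$ to the linear function $u = l_v$. First, by Lemma \ref{5205}, $F(v) = \frac{1}{V}\mathcal L(l_v)$, so the vanishing of the Futaki invariant on $\eta_c(M)$ is tautologically equivalent to $\mathcal L(l_v) = 0$ for all $v \in \eta_c(M)$. The content of the corollary is therefore the translation of this linear vanishing condition into a geometric statement about the weighted barycentres.

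Next, I would substitute $u = l_v(y) = \sum_i v^i y_i$ into (\ref{5119}). Since $\nabla l_v \equiv v$ is constant, the formula collapses to
\begin{equation*}
\mathcal L(l_v) = \Bigl(\sum_A \Lambda_A \int_{E_A} \langle y, v\rangle \pi\,dy\Bigr) - 4\langle \rho, v\rangle \int_{2P_+} \pi\,dy + (\bar S/n - \bar S)\int_{2P_+} l_v\,\pi\,dy + \cdots,
\end{equation*}
which, after using the definitions of $bar$, $\widetilde{bar}$, and the identity $\bar S \cdot V = n\sum_A \Lambda_A\int_{E_A}\pi\,dy$ from (\ref{5304}), rearranges to
\begin{equation*}
\mathcal L(l_v) = W\bigl\langle (n+1)\widetilde{bar} - n\cdot bar,\; v\bigr\rangle - 4V\langle \rho, v\rangle,
\end{equation*}
where $W := \sum_A \Lambda_A \int_{E_A} \pi\,dy$ and $V = \int_{2P_+}\pi\,dy$.

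Now I would invoke the characterization of $v \in \eta_c(M)$ recalled just before Lemma \ref{5205}: $\alpha(v) = 0$ for all $\alpha \in \Phi$, so $v$ lies in the toric part $\mathfrak a_t$ of $\mathfrak a$. Since $\rho = \frac{1}{2}\sum_{\alpha \in \Phi_+}\alpha$ lies in $\mathfrak a_{ss}^*$, the orthogonality of $\mathfrak a_t$ and $\mathfrak a_{ss}$ gives $\langle \rho, v\rangle = 0$, so the last term drops out and
\begin{equation*}
\mathcal L(l_v) = W\bigl\langle (n+1)\widetilde{bar} - n\cdot bar,\; v\bigr\rangle, \quad v \in \mathfrak a_t.
\end{equation*}

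Finally, this linear functional on $\mathfrak a_t$ vanishes identically if and only if the vector $(n+1)\widetilde{bar} - n\cdot bar$ is orthogonal to $\mathfrak a_t$, i.e.\ lies in $\mathfrak a_{ss}^*$; under the identification $\mathfrak a \cong \mathfrak a^*$ this is exactly the condition $\widetilde{bar} - \frac{n}{n+1}bar \in \mathfrak a_{ss}$. Combining with step one yields the corollary. I do not expect any real obstacle here, as the argument is a direct bookkeeping computation; the one point requiring care is confirming that $\eta_c(M)$ (after restriction to $G$) is exactly parametrized by $\mathfrak a_t$ so that the test against all $v$ produces precisely the $\mathfrak a_t^*$-component of $(n+1)\widetilde{bar} - n\cdot bar$.
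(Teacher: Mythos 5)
Your proposal is correct and follows essentially the same route as the paper: substitute $u=l_v$ into (\ref{5119}), use (\ref{5304}) and the definitions of $bar$ and $\widetilde{bar}$ to get $\mathcal L(l_v)=(n+1)\left(\sum_A\Lambda_A\int_{E_A}\pi\,dy\right)\langle\widetilde{bar}-\tfrac{n}{n+1}bar,v\rangle$ (the $\rho$-term dropping since $\alpha(v)=0$ forces $\langle\rho,v\rangle=0$), and then test against all $v$ with ${\rm Re}(v)\in\mathfrak a_t$. The only cosmetic difference is that the paper's displayed computation suppresses the $-4\langle\rho,v\rangle V$ term from the outset, whereas you carry it and cancel it explicitly.
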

\begin{proof}
By (\ref{5119}) and (\ref{5304}), we have $$
\begin{aligned}
\mathcal L(l_v)&=(n+1)\sum_A\int_{E_A}\Lambda_Ay_iv^i\pi \,dy-\int_{2P_+}\bar Sy_iv^i\pi\, dy\\
&=(n+1)\cdot\left(\sum_A\Lambda_A\int_{E_A}\pi\, dy\right)\cdot\left\langle \widetilde{bar}-{\frac{n}{n+1}}bar,v\right\rangle.
\end{aligned}$$
This proves the corollary.
\end{proof}

Another explanation of $\mathcal L(u)$ for a $W$-invariant, convex piecewise linear $u$ can be described as the generalized Futaki-invariant corresponding to a toric degeneration $\mathfrak U$ as done in \cite[Theorem 3.3]{AK}.  In fact,
\begin{align}\label{generalized-futaki} F(\mathfrak U)={\frac{1}{2\int_{P_+}H_rdy}}\left(\int_{\partial P_+}uH_rd\sigma+2\int_{P_+}uH_{r-1}dy-a\int_{P_+}uH_rdy\right),
\end{align}
where $a={\frac{\int_{\partial P_+}H_rd\sigma+2\int_{P_+}H_{r-1}dy}{\int_{P_+}H_rdy}}$.
The coefficients $H_{*}$ arise from the homogeneous expression
$$\dim(End (E_{\varpi}))=H_r(\varpi)+H_{r-1}(\varpi)+...,~\varpi\in\mathfrak a^*_+\cap\mathfrak M,$$ for the irreducible $G$-representation $E_\varpi$ of highest weight $\varpi$. By the Weyl character formula, $H_{r}(y)={\frac{\pi(y)}{\prod_{\alpha\in\Phi_+}\langle\alpha,\rho\rangle^2}}$ and $H_{r-1}(y)={\frac{\langle\nabla\pi(y),\rho\rangle}{\prod_{\alpha\in\Phi_+}\langle\alpha,\rho\rangle^2}}$.
Thus by changing the integral variable $y$ to ${\frac12} y$ in (\ref{generalized-futaki}), we see that $a=\bar S$ and \begin{equation}\label{+generalized-futaki}\mathcal L(u)=V\cdot F(\mathfrak U),\end{equation} for any $W$-invariant rational convex piecewise linear $u$.

In Fano case,  we have  all $\Lambda_A=1$. This is because there is a smooth $K\times K$-invariant Ricci potential $H_0$ on $M$ so that
\begin{equation}\label{5114}
-\log\det(\partial\bar{\partial}\Psi_0)-\Psi_0=H_0.\nonumber
\end{equation}
Then it reduces to a bounded smooth $h_0$ on $\mathfrak a$,
\begin{equation}\label{5115}
\begin{aligned}
-h_0&=\log\det(\psi_{0,ij})+\psi_0-\log \mathbf J(x)\\
&=-\log\det(u_{0,ij})+y_iu_{0,i}-u_0+\chi(\nabla u_0).
\end{aligned}
\end{equation}
By (\ref{D of Guillemin}), the singular terms on the right hand side for $y\in2P_+$ is
\begin{equation}\label{5116}
\sum_{ A}\left(1-{\frac1 2}y_iu_{A}^i+2\rho_iu_{A}^i\right)\log l_{A}(y).\nonumber
\end{equation}
It follows $$\lambda_{A}(\Lambda_{A}-1)=2-u_{A}^iy_i+4\rho_iu_{A}^i=0.$$
Thus $\Lambda_A=1$.

Now, in Fano case, we see that $\widetilde{bar}=bar$.  Then (\ref{bar}) implies that $bar\in\mathfrak a_{ss}$.   By Corollary \ref{+5205}, the Futaki invariant vanishes.  Furthermore,    by (\ref{5119}),
we get
\begin{equation}\label{L-fano}
\mathcal L(u)=\int_{2P_+}\langle y-4\rho,\nabla u\rangle\pi\,dy.
\end{equation}
The following proposition shows that (\ref{bar}) is a necessary condition of the existence of K\"ahler-Einstein metrics on $(M,L)$ from the view of K-stability.

\begin{prop}\label{necessary}
Let $(M,L)$ be a Fano compactification of $G$. Then $M$ is not K-stable if $bar-4\rho\not\in\Xi$.
\end{prop}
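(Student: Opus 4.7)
The plan is to construct a $W$-invariant rational convex piecewise-linear function $u$ on $2P$, not affine on $2P$, with $\mathcal L(u)\le 0$. By \eqref{+generalized-futaki}, such a $u$ corresponds to a non-trivial toric degeneration $\mathfrak U$ whose generalized Futaki invariant satisfies $F(\mathfrak U)=\mathcal L(u)/V\le 0$, which violates $K$-stability. Since we are in the Fano case where $\Lambda_A=1$ and $\bar S=n$, identity \eqref{L-fano} reduces the task to making
$$\mathcal L(u)=\int_{2P_+}\langle y-4\rho,\nabla u\rangle\pi\,dy$$
non-positive, which is the form I would work with throughout.

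First I would apply a convex separation argument. The cone $\Xi$ is the relative interior of the cone generated by $\Phi_+\subset\mathfrak a_{ss}^*$, and its dual cone in $\mathfrak a$ is exactly $\bar{\mathfrak a}_+=\{v\in\mathfrak a:\alpha(v)\ge 0,\ \forall\alpha\in\Phi_+\}$. Since $bar-4\rho\notin\Xi$, Hahn--Banach produces a nonzero $v\in\bar{\mathfrak a}_+$ with $\langle v,bar-4\rho\rangle\le 0$; because the facets of $\bar\Xi$ are cut out by integer combinations of roots, this $v$ can be chosen rational (after a small perturbation inside the dual cone in the strict case, and by using the rational normal to the supporting face in the boundary case).

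Next I would form
$$u(y)=\max_{w\in W}\langle wv,\,y\rangle,$$
a $W$-invariant rational convex piecewise-linear function on $\mathfrak a^*$. The standard Weyl-orbit dominance property (for $v\in\bar{\mathfrak a}_+$ and $y$ in the closed dominant chamber $\bar{\mathfrak a}_+^*$, the pairing $\langle wv,y\rangle$ is maximized at $w=e$) gives $u(y)=\langle v,y\rangle$ on $2P_+\subset\bar{\mathfrak a}_+^*$, so $\nabla u\equiv v$ on the interior of $2P_+$. Plugging into the Fano expression displayed above yields
$$\mathcal L(u)=\langle v,bar-4\rho\rangle\int_{2P_+}\pi\,dy=V\langle v,bar-4\rho\rangle\le 0,$$
and hence $F(\mathfrak U)\le 0$ by \eqref{+generalized-futaki}.

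The hard part will be verifying non-triviality of $\mathfrak U$, and this is where the main obstacle lies. If the semisimple component $v_{ss}\in\mathfrak a_{ss}$ of $v$ is non-zero, then $W$ acts non-trivially on $v_{ss}$, so $u$ has several distinct linear pieces and is not affine on $2P$, giving a genuine non-product test configuration. If instead every valid separating $v$ lies in the center $\mathfrak a_t$, then since $\rho\in\mathfrak a_{ss}^*$ the non-positivity $\langle v,bar-4\rho\rangle=\langle v,bar\rangle\le 0$ forces a non-vanishing $\mathfrak a_t^*$-component of $bar$; by Lemma \ref{5205} the classical Futaki invariant $F(v)=\mathcal L(l_v)/V\ne 0$, so $M$ is already destabilized by the product configuration generated by the central holomorphic vector field $v$. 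In either case one obtains a destabilizing toric degeneration, so $M$ is not $K$-stable.
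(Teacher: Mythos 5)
Your proof is correct and follows essentially the same route as the paper: the paper simply makes your separating functional explicit, writing $(bar-4\rho)_{ss}=\sum_i\lambda_i\alpha_{(i)}$ with (after relabelling) $\lambda_1\le 0$, taking $v=\varpi_1$ to be the fundamental weight dual to $\alpha_{(1)}$, and using $u=\max_{w\in W}\langle w\cdot\varpi_1,y\rangle$, which restricts to $\langle\varpi_1,y\rangle$ on $2P_+$ and gives $\mathcal L(u)={\frac12}|\alpha_{(1)}|^2\lambda_1\le0$ exactly as in your computation. Your extra branch for a purely central separating $v$ (where only the classical Futaki invariant obstructs) addresses a case the paper's displayed proof glosses over, and the one loose step there --- $\langle v,bar\rangle\le0$ alone does not force $bar_t\ne0$ --- is repaired by noting that if $bar_t=0$ then $bar-4\rho\in\mathfrak a_{ss}^*$, so separation inside $\mathfrak a_{ss}^*$ already yields a nonzero semisimple $v$ and you are back in your first branch.
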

\begin{proof}
Let $\{\alpha_{(1)},...,\alpha_{(r')}\}$ be the simple roots in $\Phi_+$. Since $bar-4\rho\not\in\Xi$, without loss of generality we can write
$$bar-4\rho=\lambda_1\alpha_{(1)}+...+\lambda_{r'}\alpha_{(r')}+v,$$
where $\lambda_1\leq0$ and $v\in\mathfrak a^*_{t}$. Let $\{\varpi_i\}$ be the fundamental weights for $\{\alpha_{(1)},...,\alpha_{(r')}\}$ such that ${\frac{2\langle \varpi_i,\alpha_{(j)}\rangle}{|\alpha_{(j)}|^2}}=\delta_{ij}$. Define a $W$-invariant rational piecewise linear function $u$ on $2P$ by
$$u(y)=\max_{w\in W}\{\langle w\cdot \varpi_1,y\rangle\}.$$
Then $u$ defines a  non-trivial   toric  degeneration. Since $\varpi_1$ is dominant, we have
$$u|_{2P_+}(y)=\langle \varpi_1,y\rangle.$$
 Thus by (\ref{L-fano}), we get
$$\mathcal L(u)=\langle bar-4\rho,\varpi_1\rangle={\frac12}|\alpha_{(1)}|^2\lambda_1\leq0.$$
By (\ref{+generalized-futaki}), the proposition is proved.
\end{proof}

\section{A criterion for properness of the K-Energy}

In this section, we study the properness of the K-energy  associated to a general K\"ahler class  $\omega_0$. We reduce the problem to $\mathcal K(\cdot)$.

Let $O$ be the origin of $\mathfrak a^*$. Note that $\mathfrak a^*_t$ is the fixed point set of the $W$-action. Then $\nabla u(O)\in\mathfrak a^*_t$ for any $u\in\mathcal C_{\infty,W}$. We can normalize $u\in\mathcal C_{\infty,W}$ by
\begin{equation}\label{tilde u}\tilde u(y)=u(y)-\langle\nabla u(O),y\rangle-u(O).\end{equation}
Then $\tilde u\in\mathcal C_{\infty,W}$ and
\begin{align}\label{normalization-u}\min_{2P}\tilde u=\tilde u(O)=0.
\end{align}
The subset of normalized functions in $\mathcal C_{\infty,W}$ and $\mathcal C_{\infty,+}$ will be denoted by $\hat {\mathcal C}_{\infty,W}$ and $\hat {\mathcal C}_{\infty,+}$, respectively.
The following proposition gives a criterion for the properness of $\mathcal K(\cdot)$.

\begin{prop}\label{proper-red}
Under the assumption of Theorem \ref{LZZ1},
for any $\delta\in(0,1)$, there exists a uniform constant $C_{\delta}>0$, such that
\begin{equation}\label{608}
\mathcal K(u)\geq \delta\int_{2P_+}u\pi \,dy-C_{\delta}, \, \forall \ u\in\hat{\mathcal C}_{\infty,+}.
\end{equation}
\end{prop}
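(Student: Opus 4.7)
I would split $\mathcal K(u)=\mathcal L(u)+\mathcal N(u)$ as in (\ref{L}), (\ref{N}), and bound each piece from below separately, following the general toric strategy of \cite{ZZ} but modified to accommodate the Weyl walls.

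For the linear part, the starting point is the identity
$$\int_{E_A}\bigl[\langle y,\nabla u\rangle+nu\bigr]\pi\,dy=\int_{F_A}u\pi\langle y,\nu_A\rangle\,d\sigma_0,$$
obtained by applying the divergence theorem to the vector field $uy\pi$ on the cone $E_A$ (using $\nabla\cdot(y\pi)=n\pi$, since $\pi$ is homogeneous of degree $n-r$). Substituting this into (\ref{5119}) and writing $\Lambda_A=\Lambda_*+(\Lambda_A-\Lambda_*)$, where $\Lambda_*:=\min_A\Lambda_A$, separates $\mathcal L(u)$ into a \emph{uniform} piece depending only on $\Lambda_*$ plus a manifestly non-negative boundary remainder (non-negative because $u\geq 0$ on $2P_+$ by the normalisation). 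Hypotheses (\ref{tildebar1})--(\ref{tildebar2}) assert precisely that $\Lambda_*\widetilde{bar}_{ss}-4\rho\in\Xi$ and $\widetilde{bar}_{ss}-bar_{ss}\in\bar\Xi$, which by Lemma \ref{5205} and Corollary \ref{+5205} is exactly the ``stability'' input needed to force the uniform piece to dominate $\int u\pi\,dy$ when $u$ is large. Combining with the normalisations $u\geq 0$, $u(O)=0$, $\nabla u(O)=0$ and a convexity comparison controlling $u(y)$ on large parts of $2P_+$ by its boundary values, I expect to obtain
$$\mathcal L(u)\geq \bigl((n+1)\Lambda_*-\bar S-\eta\bigr)\int_{2P_+}u\pi\,dy-C_\eta$$
for any preassigned $\eta>0$, the coefficient being strictly positive by (\ref{barS}).

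For the nonlinear part $\mathcal N(u)$, the determinant term $-\int\log\det(u_{,ij})\pi\,dy$ will be bounded below via Jensen's inequality applied to the probability measure $V^{-1}\pi\,dy$ combined with the Legendre duality $u+\psi=\langle y,x\rangle$, yielding a bound of the form $-C-\eta\int u\pi\,dy$, much as in the toric treatment of \cite{ZZ, ZZ08}. The genuinely new difficulty is the contribution $\int_{2P_+}[\chi(\nabla u)+4\langle\rho,\nabla u\rangle]\pi\,dy$. Asymptotically $\chi(x)+4\langle\rho,x\rangle$ tends to a finite constant as $x\to\infty$ strictly inside $\mathfrak a_+$, but along each Weyl wall $W_\alpha$ the summand $-2\log\sinh\alpha(\nabla u)$ of $\chi(\nabla u)$ blows up to $+\infty$ as $\alpha(\nabla u)\to 0$, which is forced by the $W$-invariance of the extension of $u$. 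The saving feature is the quadratic vanishing of $\pi=\prod_\beta\beta(y)^2$ on the walls. I would therefore perform a matched asymptotic analysis in a tubular neighbourhood of each $W_\alpha$, balancing the logarithmic blow-up of $\chi$ against the quadratic vanishing of $\pi$ and using the regularity $u-u_0\in C^\infty(\overline{2P})$, to derive
$$\int_{2P_+}\bigl[\chi(\nabla u)+4\langle\rho,\nabla u\rangle\bigr]\pi\,dy\geq -C-\eta\int_{2P_+}u\pi\,dy.$$

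Summing the two estimates and taking $\eta$ small gives (\ref{608}). The main obstacle is the wall analysis just described: the toric framework of \cite{ZZ} has no analogue of the $\chi(\nabla u)$ term, so controlling its contribution near the Weyl walls requires genuinely new energy estimates, exactly the difficulty flagged by the authors in the introduction.
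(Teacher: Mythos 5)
Your overall decomposition $\mathcal K=\mathcal L+\mathcal N$ matches the paper's, and the observation that writing $\Lambda_A=\Lambda_*+(\Lambda_A-\Lambda_*)$ produces a nonnegative boundary remainder is sound. But both halves of your plan have genuine gaps. For the linear part: centering the convexity inequality $\langle y-y_*,\nabla u\rangle\geq u(y)-u(y_*)$ at any point $y_*$ leaves behind the term $-\,u(y_*)\cdot\mathrm{const}$, which is negative and \emph{not} controlled by $\eta\int u\pi\,dy+C_\eta$ (normalization gives $u\geq0$, i.e.\ the wrong sign). The paper kills the resulting affine contributions using the vanishing of the Futaki invariant (Corollary \ref{+5205}) after centering at $\widetilde{bar}_{ss}$, and then must still run a compactness/contradiction argument (the proof of Proposition \ref{5301}) to extract \emph{some} positive constant $\lambda$ in $\mathcal L(u)\geq\lambda\int_{\partial(2P_+)}\langle y,\nu\rangle u\pi\,d\sigma_0$; your asserted coefficient $(n+1)\Lambda_*-\bar S-\eta$ is not obtained by the local convexity comparison alone, and no substitute for the compactness step is offered.

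For the nonlinear part you have misidentified the difficulty. Since $\rho=\tfrac12\sum_{\alpha\in\Phi_+}\alpha$, one has $\chi(x)+4\langle\rho,x\rangle=2\sum_{\alpha\in\Phi_+}\bigl[\alpha(x)-\log\sinh\alpha(x)\bigr]\geq0$, so the term you flag as requiring ``matched asymptotics'' is pointwise nonnegative and needs no lower bound at all; the blow-up of $-\log\sinh\alpha(\nabla u)$ near the walls is to $+\infty$, which helps. The real obstruction is $-\int\log\det(u_{,ij})\pi\,dy$. Jensen's inequality against $V^{-1}\pi\,dy$ fails outright because $\int_{2P_+}\det(u_{,ij})\pi\,dy=\int_{\mathfrak a_+}\pi(\nabla\psi)\,dx=+\infty$. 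The workable route (Proposition \ref{non-linear part}) is the convexity of $-\log\det$ and of $\chi$ compared at the Guillemin potential $u_0$, followed by integration by parts; this produces (i) boundary terms $\sum_A\frac{2}{\lambda_A}\int_{F_A}u\langle y,\nu_A\rangle\pi\,d\sigma_0$ with \emph{fixed}, not small, coefficients, which can only be absorbed via the coercivity of $\mathcal L$ from Proposition \ref{5301} combined with the scaling trick $\mathcal N(u)>\mathcal N(\epsilon u)-n\log\epsilon$ of (\ref{607})--(\ref{605}), so an independent bound $\mathcal N(u)\geq-C-\eta\int u\pi\,dy$ with arbitrary $\eta$ is not available; and (ii) the singular quantity $Q$ of (\ref{59}), built from $u_0$, whose control $|Q|\lesssim\langle\rho,\nabla\pi/\pi\rangle$ near the Weyl walls (Lemmas \ref{u^-1}--\ref{521}, Proposition \ref{Q int}) together with the reflection estimate $\int_{2P'_+}u\,dy\leq C\int_{2P'_+}u\pi\,dy$ of Lemma \ref{52} constitutes the genuinely new wall analysis. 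Your proposal contains no substitute for these steps.
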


We shall estimate both of  the linear part $\mathcal L(\cdot)$ and nonlinear part $\mathcal N(\cdot)$ of   $\mathcal K(\cdot) $ below.

\subsection{Estimate of $\mathcal L(\cdot)$}
The following lemma can be directly proved from the convexity of $u$.

\begin{lem}\label{norm-convex}
There is a uniform constant $\Lambda$, such that
\begin{equation}\label{44}
\int_{2P_+}u\pi\,dy\leq\Lambda\int_{\partial(2P_+)}u\langle y,\nu\rangle\pi\,d\sigma_0,\ \forall u\in\hat {\mathcal C}_{\infty,+}.\nonumber
\end{equation}
\end{lem}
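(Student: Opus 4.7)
The plan is to exploit the cone decomposition $2P_+=\bigcup_A E_A$ already introduced before Theorem \ref{LZZ1}, together with the radial convexity coming from the normalization \eqref{normalization-u} and the homogeneity of $\pi$. The key observations are that any $u\in\hat{\mathcal C}_{\infty,+}$ is nonnegative with $u(O)=0$ (since $O$ is a global minimum of the normalized $\tilde u$ because $\nabla\tilde u(O)=0$), that $2P_+$ is star-shaped with respect to $O$, that $\pi(y)=\prod_{\alpha\in\Phi_+}\langle\alpha,y\rangle^2$ is homogeneous of degree $n-r=2|\Phi_+|$, and that $\pi\equiv 0$ on every Weyl wall.

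First I would parameterize each cone $E_A=\{tz:t\in[0,1],\,z\in F_A\}$ by $(t,z)\mapsto tz$. A direct Jacobian computation, using an orthonormal frame $\{e_1,\dots,e_{r-1},\hat\nu_A\}$ with $\{e_i\}$ tangent to $F_A$ and $\hat\nu_A$ the unit outward normal, gives
\[
dy=t^{r-1}\langle z,\hat\nu_A\rangle\,dt\,d\sigma_0(z),
\]
in which $\langle z,\hat\nu_A\rangle$ is the constant distance from $O$ to the hyperplane $\{\langle y,u_A\rangle=\lambda_A\}$ carrying $F_A$.

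Next, convexity of $u$ along the segment $[O,z]$ with $u(O)=0$ yields the one-dimensional estimate $u(tz)\le t\,u(z)$ for $t\in[0,1]$ and $z\in F_A$, while homogeneity gives $\pi(tz)=t^{n-r}\pi(z)$. Inserting these into $\int_{E_A}u\pi\,dy$ and performing the $t$-integration,
\begin{align*}
\int_{E_A}u\pi\,dy
&\le\int_0^1 t\cdot t^{n-r}\cdot t^{r-1}\,dt\int_{F_A}u(z)\pi(z)\langle z,\hat\nu_A\rangle\,d\sigma_0(z)\\
&=\frac{1}{n+1}\int_{F_A}u\langle z,\hat\nu_A\rangle\pi\,d\sigma_0.
\end{align*}

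Finally, summing over $A$ and noting that the portion of $\partial(2P_+)$ lying on Weyl walls contributes nothing to $\int_{\partial(2P_+)}u\langle y,\nu\rangle\pi\,d\sigma_0$ (because $\pi$ vanishes there), one obtains the lemma with $\Lambda=\tfrac{1}{n+1}$, up to a fixed rescaling if $\nu_A$ is taken to be the primitive lattice normal $u_A$ rather than the unit outward normal $\hat\nu_A$. There is no real obstacle here; the only points requiring care are the bookkeeping distinction between outer faces $F_A$ and Weyl walls in $\partial(2P_+)$, and a consistent convention for the outward normal used in the boundary integral.
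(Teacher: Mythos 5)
Your proof is correct and is essentially the argument the paper intends: the authors give no proof, remarking only that the lemma ``can be directly proved from the convexity of $u$,'' and your cone decomposition $2P_+=\bigcup_A E_A$ combined with $u(tz)\le t\,u(z)$ (valid since the normalization gives $u\ge 0$ and $u(O)=0$) and the degree-$(n-r)$ homogeneity of $\pi$ is exactly that direct convexity argument. The explicit constant $\Lambda=\tfrac{1}{n+1}$ and the observation that the Weyl-wall portion of $\partial(2P_+)$ contributes nothing to the boundary integral because $\pi$ vanishes there are both consistent with the conventions used elsewhere in the paper.
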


Now we prove
\begin{prop}\label{5301}
Under the assumption of   Theorem \ref{LZZ1},  there exists a positive constant $\lambda$ such that
\begin{equation}\label{5302}
\mathcal L(u)\geq\lambda\int_{\partial(2P_+)}\langle y,\nu\rangle u\pi\,d\sigma_0,~\forall u\in\hat {\mathcal C}_{\infty,+}.\nonumber
\end{equation}
\end{prop}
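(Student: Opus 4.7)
The plan is to prove the bound by starting from the cone form \eqref{5119} of $\mathcal L$, extracting a positive bulk term via convexity of $u$, and then absorbing the sign-indefinite $\rho$-contribution using the three barycenter-type hypotheses \eqref{tildebar1}, \eqref{tildebar2}, \eqref{barS}. Write $\Lambda_* = \min_A \Lambda_A$ and decompose $\Lambda_A = \Lambda_* + (\Lambda_A - \Lambda_*)$. Using $\int_{E_A}\langle y,\nabla u\rangle\pi\,dy = \int_{F_A}\langle y,\nu_A\rangle u\pi\,d\sigma_0 - n\int_{E_A}u\pi\,dy$ (since $\text{div}(y\pi)=n\pi$, with $\pi=0$ on Weyl walls and $\langle y,\nu\rangle=0$ at the apex), I rearrange \eqref{5119} into
\[
\mathcal L(u)=\int_{2P_+}\langle \Lambda_* y-4\rho,\nabla u\rangle\pi\,dy+\sum_A(\Lambda_A-\Lambda_*)\int_{F_A}\langle y,\nu_A\rangle u\pi\,d\sigma_0+(n\Lambda_*-\bar S)\int_{2P_+}u\pi\,dy.
\]
The middle sum is automatically non-negative, and by the normalization \eqref{normalization-u} plus convexity ($u(O)=0$ and $u\geq 0$) we have $\langle y,\nabla u\rangle\geq u$ on $2P_+$, so the first integral majorizes $\Lambda_*\int u\pi\,dy - 4\int\langle\rho,\nabla u\rangle\pi\,dy$. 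Combining with \eqref{barS} in the form $(n+1)\Lambda_*-\bar S>0$, the bulk contribution from $u$ alone is already positive.

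The delicate point is the $-4\int\langle\rho,\nabla u\rangle\pi\,dy$ term. I integrate by parts to obtain $-4\sum_A\int_{F_A}\langle\rho,\nu_A\rangle u\pi\,d\sigma_0+4\int_{2P_+}u\langle\rho,\nabla\pi\rangle\,dy$. Since $\nabla\pi=2\pi\sum_{\alpha\in\Phi_+}\alpha/\alpha(y)$ and $\langle\rho,\alpha\rangle>0$ for all $\alpha\in\Phi_+$, the interior piece $4\int u\langle\rho,\nabla\pi\rangle\,dy$ is $\geq 0$. Using the facet identity $4\langle\rho,\nu_A\rangle = (\Lambda_A-2/\lambda_A)\langle y,\nu_A\rangle$ on $F_A$, the remaining boundary piece can be absorbed into the positive surface integral coming from the $(\Lambda_A-\Lambda_*)$ sum above, provided that the hypothesis \eqref{tildebar1}, $\Lambda_*\widetilde{bar}_{ss}-4\rho\in\Xi$, is invoked. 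Concretely, \eqref{tildebar1} says exactly that the weighted integrated vector $\sum_A\int_{E_A}(\Lambda_*y_{ss}-4\rho)\pi\,dy$ lies in the positive Weyl cone; pairing this with directions extracted from $\nabla u$ (after using Corollary \ref{+5205} and the vanishing of the Futaki invariant to discard the $\mathfrak a_t^*$-linear part of $u$) turns the integrated lower bound into a genuine pointwise one on the $\rho$-derivative term. Condition \eqref{tildebar2} is used to pass between the $\Lambda_A$-weighted barycenter $\widetilde{bar}$ and the ordinary barycenter $bar$ when matching the coefficient of $\int u\pi\,dy$ that comes from $\bar S\int u\pi\,dy$.

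The final step is to convert the resulting bound, which naturally controls $\int u\pi\,dy$ and the individual $\int_{F_A}\langle y,\nu_A\rangle u\pi\,d\sigma_0$, into the single boundary integral $\int_{\partial(2P_+)}\langle y,\nu\rangle u\pi\,d\sigma_0$. This is where Lemma \ref{norm-convex} (a convexity-based Poincaré-type inequality) does the work, converting interior integrals against $\pi\,dy$ into multiples of the boundary integral. Choosing $\lambda>0$ smaller than the gap produced jointly by $(n+1)\Lambda_*-\bar S>0$ and by the cone-positivity from \eqref{tildebar1}–\eqref{tildebar2} yields the desired inequality.

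The principal obstacle I anticipate is the careful justification of the integration-by-parts identities \emph{near the Weyl walls} $W_\alpha\cap\partial(2P_+)$. Although $\pi$ vanishes to second order on each $W_\alpha$, the function $u$, being only the restriction to $2P_+$ of a $W$-invariant convex function on $2P$, is not $W$-symmetric relative to $2P_+$ and its normal derivative along $W_\alpha$ need not vanish; this is precisely the new difficulty alluded to in the introduction. Thus the IBP computations that move $\nabla u$ across $\pi\,dy$ must be done with a cut-off argument in an $\varepsilon$-neighborhood of each Weyl wall, verifying that the exceptional boundary contributions vanish in the limit using the quadratic decay of $\pi$ against the bounded behavior of $u$ and $\nabla u$ provided by $u\in\hat{\mathcal C}_{\infty,+}$. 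Controlling these limits uniformly is what couples the proof to the smoothness class rather than to an arbitrary convex function.
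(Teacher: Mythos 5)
Your starting point coincides with the paper's: both use the cone form \eqref{5119} of $\mathcal L$, the convexity inequality for $u$, and the three hypotheses \eqref{tildebar1}--\eqref{barS} to show that $\mathcal L(u)$ decomposes into nonnegative pieces (the paper centers the convexity inequality at $\widetilde{bar}_{ss}$ and uses Corollary \ref{+5205} to kill the $\mathfrak a_t$-component, much as you indicate). But there is a genuine gap at the point where you pass from nonnegativity to the claimed strictly positive constant $\lambda$. First, your facet-by-facet absorption does not close: after integrating the $-4\langle\rho,\nabla u\rangle$ term by parts and using $4\langle\rho,\nu_A\rangle=(\Lambda_A-2/\lambda_A)\langle y,\nu_A\rangle$ on $F_A$, the surviving coefficient of $\int_{F_A}\langle y,\nu_A\rangle u\pi\,d\sigma_0$ is $2/\lambda_A-\Lambda_*$, which has no definite sign, and the hypothesis \eqref{tildebar1} is an \emph{integrated} barycenter condition that cannot be upgraded to the pointwise or facetwise positivity your absorption requires; your own phrase about turning the integrated bound into a pointwise one is precisely the unproved step. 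Second, the bulk term $((n+1)\Lambda_*-\bar S)\int_{2P_+}u\pi\,dy$ cannot dominate a boundary integral, and Lemma \ref{norm-convex} goes the wrong way for your purpose: it bounds $\int_{2P_+}u\pi\,dy$ \emph{above} by $\int_{\partial(2P_+)}u\langle y,\nu\rangle\pi\,d\sigma_0$, so it converts interior control into nothing useful for a lower bound of $\mathcal L$ by the boundary integral. There is no ``gap'' to choose $\lambda$ smaller than: all the nonnegative terms in the decomposition vanish simultaneously on affine functions with gradient in $\mathfrak a_t$, and can be made arbitrarily small relative to the normalized boundary integral by nearby convex functions, so a purely termwise argument cannot produce $\lambda>0$.

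The paper supplies the missing mechanism with a compactness and contradiction argument that your proposal omits entirely: assuming a normalized sequence $u_k$ with $\int_{\partial(2P_+)}u_k\langle y,\nu\rangle\pi\,d\sigma_0=1$ and $\mathcal L(u_k)\to 0$, it extracts a locally uniform limit $u_\infty$, uses the vanishing of the first nonnegative term to conclude $u_\infty$ is affine, uses $\mathcal L(u_\infty)=0$ together with \eqref{tildebar1}--\eqref{tildebar2} to force the gradient into $\mathfrak a_t$ and hence $u_\infty\equiv 0$, and then derives a contradiction from the \emph{second} representation \eqref{+5119} of $\mathcal L$, whose outer-facet term alone is bounded below by $2\lambda_0$ once $\int_{2P_+}u_k\pi\,dy\to 0$. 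It is this interplay between the two expressions \eqref{5119} and \eqref{+5119} for $\mathcal L$, mediated by compactness, that produces the constant $\lambda$; without it (or an equivalent quantitative coercivity estimate) your argument does not establish the proposition. Your concluding worry about integration by parts near the Weyl walls is, by contrast, not the real obstacle here: $\pi$ vanishes quadratically on each $W_\alpha$ and the identities \eqref{5119}, \eqref{+5119} are already justified in Section 3; the Weyl-wall difficulties alluded to in the introduction concern the estimates of $Q$ and $\mathcal N$, not this linear-part estimate.
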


\begin{proof}
Since $u$ is convex, we have $$\langle y-\widetilde{bar}_{ss},\nabla u(y)\rangle\geq u(y)-u(\widetilde{bar}_{ss}).$$
By (\ref{5119}), we have
\begin{equation}\begin{aligned}
&\ \ \ \mathcal L(u)\notag\\
&=\sum_A\int_{ E_A}\Lambda_A\langle y-\widetilde{bar}_{ss},\nabla u\rangle\pi  dy+\sum_A\int_{ E_A}\langle\Lambda_A\widetilde{bar}_{ss}-4\rho,\nabla u\rangle\pi dy\\
&\ \ \ +\sum_A\int_{ E_A}(\Lambda_An-\bar S)u\pi dy\\
&\geq\sum_A\int_{ E_A} (\Lambda_A(n+1)-\bar S ) [u(y)-u(\widetilde{bar}_{ss})-\langle\nabla{u}|_{\widetilde{bar}_{ss}},y-\widetilde{bar}_{ss}\rangle ]\pi dy\\
&\ \ \ +\sum_A\int_{ E_A}\langle\Lambda_A\widetilde{bar}_{ss}-4\rho,\nabla u\rangle\pi  dy
+\sum_A\int_{ E_A}\Lambda_A\langle\nabla{u}|_{\widetilde{bar}_{ss}},y-\widetilde{bar}_{ss}\rangle\pi dy\\
&\ \ \ +\sum_A\int_{ E_A}(\Lambda_An-\bar S)\left(\langle\nabla{u}|_{\widetilde{bar}_{ss}},y-\widetilde{bar}_{ss}\rangle+u(\widetilde{bar}_{ss})\right)\pi dy.\notag
\end{aligned}\end{equation}
By \eqref{5304}, the last two terms equals
\begin{equation}\label{5305}
[\langle(n+1)\widetilde{bar}_t-n\cdot bar_t,\nabla u|_{\widetilde{bar}_{ss}}\rangle
+n\langle\widetilde{bar}_{ss}-bar_{ss},\nabla{u}|_{\widetilde{bar}_{ss}}\rangle] \sum_A\int_{ E_A}\Lambda_A\pi  dy.
\nonumber\end{equation}
Note that $\mathfrak a_t$ is orthogonal to $\mathfrak a_{ss}$. Choosing Re$(v)=(\nabla u|_{\widetilde{bar}_{ss}})_t$ in Corollary \ref{+5205}, we have $$\langle(n+1)\widetilde{bar}_t-n\cdot bar_t,\nabla{u}|_{\widetilde{bar}_{ss}}\rangle=\langle(n+1)\widetilde{bar}_t-n\cdot bar_t,(\nabla u|_{\widetilde{bar}_{ss}})_t\rangle=0.$$
Thus
\begin{align}\label{lower bounde of L}
&\ \ \ \mathcal L(u)\notag\\
 &\geq \sum_A\int_{ E_A}(\Lambda_A(n+1)-\bar S ) [u(y)-u(\widetilde{bar}_{ss})-\langle\nabla{u}|_{\widetilde{bar}_{ss}},y-\widetilde{bar}_{ss}\rangle ]\pi(y) dy\nonumber\\
&\ \ \ +\sum_A\int_{ E_A}\langle\Lambda_A\widetilde{bar}_{ss}-4\rho,\nabla u\rangle\pi \,dy\notag\\
&\ \ \ +n\left(\sum_A\int_{ E_A}\Lambda_A\pi\, dy\right)\langle\widetilde{bar}_{ss}-bar_{ss},\nabla{u}|_{\widetilde{bar}_{ss}}\rangle.
\end{align}
Condition (\ref{tildebar2}) implies
$\langle\widetilde{bar}_{ss}-bar_{ss},\nabla{u}|_{\widetilde{bar}_{ss}}\rangle\geq 0,$
 while (\ref{tildebar1}) implies
$$\sum_A\int_{ E_A}\langle\Lambda_A\widetilde{bar}_{ss}-4\rho,\nabla u\rangle\pi \,dy\geq 0.$$
Moreover, each equality holds if and only if $\nabla u(y)\in\mathfrak a_t$ for all $y\in2P_+$. Hence the three terms in (\ref{lower bounde of L}) are all nonnegative for $u\in\hat{\mathcal C}_{\infty,+}$.

We want to use (\ref{lower bounde of L})  to  prove  the lemma.  Suppose that  it  is not true.  Then  there exists a sequence $\{u_k\}\subset\hat {\mathcal C}_{\infty,+}$ such that
\begin{equation}\label{323}
\int_{\partial(2P_+)}u_k\langle y,\nu\rangle\pi\,d\sigma_0=1
\text{ and }
\mathcal L(u_k)\to0,\,k\to\infty.
\end{equation}
Thus there is a subsequence (still denoted by $\{u_k\}$) which converges locally uniformly to a convex function $u_{\infty}$ in $2P_+$. Since the last two terms of (\ref{lower bounde of L}) is nonnegative, we have
\begin{equation}\label{+324}\begin{aligned}
0&\leq\sum_A\int_{ E_A}\left(\Lambda_A(n+1)-\bar S\right)\left[u_k(y)-u_k(\widetilde{bar}_{ss})-\langle\nabla{u}|_{\widetilde{bar}_{ss}},y-\widetilde{bar}_{ss}\rangle\right]\pi(y) dy\\&\leq\mathcal L(u_k)\to0.\end{aligned}\nonumber
\end{equation}
Hence  $u_{\infty}$ must be  an affine linear function. By the fact  $u_k(O)=0$, we have $u_{\infty}(O)=0$
 and so
$u_{\infty}=\xi^iy_i$
for some $\xi=(\xi^i) \in\bar{\mathfrak a}_+$.

Substituting $u_{\infty}$ into (\ref{L}), we have
\begin{equation}\label{+L}
\begin{aligned}
0&=\mathcal L(u_{\infty})\\&=\sum_A\int_{ E_A}\langle\Lambda_A\widetilde{bar}_{ss}-4\rho,\xi\rangle\pi\, dy+n\left(\sum_A\int_{ E_A}\Lambda_A\pi\, dy\right)\langle\widetilde{bar}_{ss}-bar_{ss},\xi\rangle\geq 0.
\end{aligned}\nonumber
\end{equation}
Note that $\langle \Lambda_A\widetilde{bar}_{ss}-4\rho,\xi\rangle\geq0$
and $\langle\widetilde{bar}_{ss}-bar_{ss},\xi\rangle\geq0$ with "=" holds
iff $\xi\in\mathfrak a_t$. By $\mathcal L(u_{\infty})=0$, we get $\xi\in\mathfrak a_{t}$.
This implies that $u_{\infty}(y)$ is a linear function depending only on $y_{t}$, i.e., the projection of  $y$ in $\mathfrak a^*_t$.
Since $O$ lies in the interior of $\mathfrak a_t^*\cap(2P_+)$ and $u_{\infty}\geq0$,
we get  $u_{\infty}=0$. As a consequence,
\begin{eqnarray}
\int_{2P_+}u_k\pi\,dy&
\to0,\text{ as $k\to\infty.$}\label{328}
\end{eqnarray}
On the other hand, since all $\lambda_A>0$,
there exists a uniform constant $\lambda_0>0$, such that for $u\in\hat{\mathcal C}_{\infty,+}$,
\begin{equation}\label{326}
 \sum_A{\frac2{\lambda_A}} \int_{F_A}\langle y,\nu_A\rangle u\pi\, d\sigma_0\geq2\lambda_0\int_{\partial(2P_+)}u\langle y,\nu\rangle\pi\,d\sigma_0.\nonumber
\end{equation}
Note that  $\langle4\rho,\nabla \pi\rangle\geq0$ on $2P_+$.
Hence, substituting (\ref{323}), (\ref{328}) and the above equality for $u=u_k$  into (\ref{+5119}), we see
 $\mathcal L(u_k)\geq\lambda_0>0$, which contradicts to the second relation  in  (\ref{323}). The lemma is proved.
\end{proof}

\subsection{Estimate of $\mathcal N$}
We prove

\begin{prop}\label{non-linear part}
There exist uniform constants $C_{\Lambda},\,C_L,\,C_0>0$ such that for any $u\in\hat{\mathcal C}_{\infty,+}$,
\begin{eqnarray}\label{+4N'}
\mathcal N(u)\geq -C_{\Lambda}\int_{\partial(2P_+)}u\langle y,\nu\rangle\pi\,d\sigma_0-C_L\mathcal L(u)+\int_{2P_+}Qu \pi\, dy -C_0,
\end{eqnarray}
where
\begin{equation}\label{59}
Q=-\left.{\frac{\partial \chi}{\partial x^i}}\right|_{x=\nabla u_0}{\frac{\pi_{,i}}{\pi}}-\left.{\frac{\partial^2 \chi}{\partial x^i\partial x^k}}\right|_{x=\nabla u_0}u_{0,ik}- u_0^{ij}{\frac{\pi_{,ij}}{\pi}}.\end{equation}
\end{prop}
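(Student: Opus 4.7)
The plan is to establish \eqref{+4N'} by comparing $u$ to the Guillemin reference $u_0$ via the convexity of the two integrands of $\mathcal N(u)$, performing integration by parts to shift every derivative off $u$, and then using the scalar curvature identity \eqref{+S} together with \eqref{+5119} to identify $\int u Q\pi\,dy$ and to absorb the remainder into $-C_L\mathcal L(u)$ plus a boundary integral. The starting point is that $-\log\det$ is convex on positive-definite symmetric matrices and $\chi(x)=-2\sum_{\alpha\in\Phi_+}\log\sinh\alpha(x)$ is convex on $\mathfrak a$ (since $\log\sinh$ is concave); applying the two pointwise inequalities
\begin{align*}
-\log\det(u_{,ij}) &\geq -\log\det(u_{0,ij}) - u_0^{ij}(u_{,ij}-u_{0,ij}),\\
\chi(\nabla u)+4\langle\rho,\nabla u\rangle &\geq \chi(\nabla u_0)+4\langle\rho,\nabla u_0\rangle + \left[\chi'(\nabla u_0)+4\rho\right]\cdot(\nabla u-\nabla u_0),
\end{align*}
integrating against $\pi\,dy$ on $2P_+$, and collecting every $u$-independent term into a single constant $-C_0$ (finite because the logarithmic singularities of $\chi(\nabla u_0)$ at the Weyl walls are dominated by $\pi\sim\alpha(y)^2$), one reduces matters to bounding the two linear-in-$u$ bulks $-\int_{2P_+}u_0^{ij}u_{,ij}\pi\,dy$ and $\int_{2P_+}[\chi'(\nabla u_0)+4\rho]\cdot\nabla u\,\pi\,dy$.

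Next I would integrate both bulks by parts to move every derivative onto $u_0$ and $\pi$. For the first, two successive integrations combined with Lemma \ref{bound-measure} on the outer faces $F_A$ (where $u_0^{ij}\nu_j\to 0$ and $u^{ij}_{0,j}\nu_i\to 2\langle y,\nu_A\rangle/\lambda_A$) together with $\pi|_{W_\beta}=\pi_{,j}|_{W_\beta}=0$ on every Weyl wall yield
\[
-\int_{2P_+}u_0^{ij}u_{,ij}\pi\,dy=\sum_A\frac{2}{\lambda_A}\int_{F_A}\langle y,\nu_A\rangle u\pi\,d\sigma_0-\int_{2P_+}u\,(u_0^{ij}\pi)_{,ij}\,dy.
\]
For the second bulk a single integration by parts suffices; the outer-face contributions vanish because $\chi'(\nabla u_0)+4\rho\to 0$ on $F_A$ (using $\chi'(x)\to -4\rho$ as $x\to\infty$ in $\mathfrak a_+$ away from the Weyl walls), and the Weyl-wall contributions vanish by $\pi|_{W_\beta}=0$. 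Evaluating \eqref{+S} at $u=u_0$ and using the definition \eqref{59}, which gives $Q=-\bigl(u_0^{ij}\pi_{,ij}/\pi+u_{0,ik}\chi''(\nabla u_0)_{ik}+\chi'(\nabla u_0)_i\pi_{,i}/\pi\bigr)$, I then substitute into the two bulks and track the cross-cancellations; this isolates the term $\int u Q\pi\,dy$, with the residual bulk collapsing to $\int uS(\omega_0)\pi\,dy-\int 4\langle\rho,\nabla\pi\rangle u\,dy$.

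It remains to absorb $\int uS(\omega_0)\pi\,dy-\int 4\langle\rho,\nabla\pi\rangle u\,dy$ together with the outer-face integral into $-C_\Lambda\int_{\partial(2P_+)}u\langle y,\nu\rangle\pi\,d\sigma_0-C_L\mathcal L(u)$. Using \eqref{+5119} to rewrite $-\int 4\langle\rho,\nabla\pi\rangle u\,dy$ as the sum of $-\mathcal L(u)$, the outer-face integral, and $-\bar S\int u\pi\,dy$, the $\bar S$-contributions cancel with those in $\int uS(\omega_0)\pi\,dy$. The oscillatory remainder $\int u[S(\omega_0)-\bar S]\pi\,dy$ is then controlled by $\int u\pi\,dy$ (since $S(\omega_0)$ is bounded on $M$), which in turn is dominated by $\int_{\partial(2P_+)}u\langle y,\nu\rangle\pi\,d\sigma_0$ via Lemma \ref{norm-convex}. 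With $C_\Lambda,C_L>0$ chosen appropriately this produces \eqref{+4N'}.

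The principal obstacle is the singular behavior near the Weyl walls. By the $W$-invariance of $u_0$, one has $\alpha(\nabla u_0(y))\to 0$ at the matching rate $\sim\alpha(y)$ as $y\to W_\alpha$, so that $\chi'(\nabla u_0)_i\sim 1/\alpha(y)$ and $\chi''(\nabla u_0)_{ik}\sim 1/\alpha(y)^2$ near $W_\alpha$, while $\pi$ and $\pi_{,i}$ vanish only to orders $2$ and $1$ respectively. Each integration by parts and each limiting boundary argument along a Weyl wall therefore requires a precise asymptotic matching of these singularities against the vanishing of $\pi$, so as to justify that the boundary integrals vanish in the limit and that all bulk integrands remain integrable. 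A secondary obstacle is the algebraic bookkeeping: $\int u Q\pi\,dy$ emerges only after a delicate cancellation between the $-\log\det$ bulk and the $\chi+4\rho$ bulk mediated by \eqref{+S}, so isolating it with the correct sign requires careful accounting of every cross-term.
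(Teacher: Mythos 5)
Your overall architecture --- the convexity comparison with Guillemin's $u_0$ as in \eqref{+42}, integration by parts driven by Lemma \ref{bound-measure} and the quadratic vanishing of $\pi$ along the Weyl walls, and absorption of the remainder via \eqref{+5119} and Lemma \ref{norm-convex} --- is the paper's. The gap is in the final bookkeeping, where you claim that the integrated-by-parts bulk ``isolates the term $\int_{2P_+}Qu\pi\,dy$, with the residual bulk collapsing to $\int_{2P_+}uS(\omega_0)\pi\,dy-\int_{2P_+}4\langle\rho,\nabla\pi\rangle u\,dy$''. This double-counts $Q$. Evaluating \eqref{+S} at $u_0$ gives $S(\omega_0)=-u_{0,ij}^{ij}-2u_{0,j}^{ij}\pi_{,i}/\pi+Q$, while your two bulks combine, after the integrations by parts, into
\begin{equation}
-\int_{2P_+}u\left[(u_0^{ij}\pi)_{,ij}+{\frac{\partial}{\partial y_i}}\left(\left(\left.{\frac{\partial\chi}{\partial x^i}}\right|_{x=\nabla u_0}+4\rho_i\right)\pi\right)\right]dy
=\int_{2P_+}uS(\omega_0)\pi\,dy-\int_{2P_+}4\langle\rho,\nabla\pi\rangle u\,dy,\nonumber
\end{equation}
with \emph{no} separate $\int Qu\pi\,dy$ left over: the $Q$-contributions coming from $u_0^{ij}\pi_{,ij}$ and from the $\chi$-bulk cancel exactly against the copy of $Q$ hidden inside $S(\omega_0)$. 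So either you keep $\int Qu\pi\,dy$ explicit, in which case the residual is $-\int_{2P_+}u\,(u_{0,ij}^{ij}\pi+2u_{0,j}^{ij}\pi_{,i})\,dy-\int_{2P_+}4\langle\rho,\nabla\pi\rangle u\,dy$ and has nothing to do with $S(\omega_0)$, or you express everything through $S(\omega_0)$ and the $\int Qu\pi\,dy$ term disappears. As written, your identity is off by exactly $\int_{2P_+}Qu\pi\,dy$, and since $Q$ is the singular, sign-indefinite quantity the proposition is designed to isolate (and which Proposition \ref{Q int} then controls), this is not an absorbable constant.

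The repair is short and is what the paper does: keep $+\int Qu\pi\,dy$, and bound the genuine residual by noting that $u_0^{ij}$, $u_{0,j}^{ij}$ and $u_{0,ij}^{ij}$ are smooth up to $\overline{2P}$, so $|u_{0,ij}^{ij}|\le C_1$ and $|u_{0,j}^{ij}\pi_{,i}|\le C_2\langle\rho,\nabla\pi\rangle$; then \eqref{+5119} and Lemma \ref{norm-convex} convert $\int\langle\rho,\nabla\pi\rangle u\,dy$ and $\int u\pi\,dy$ into $C_L\mathcal L(u)+C_\Lambda\int_{\partial(2P_+)}u\langle y,\nu\rangle\pi\,d\sigma_0$. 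The boundedness of $S(\omega_0)$ alone cannot substitute for this, because what multiplies $u\pi$ in the residual is $S(\omega_0)-Q=-u_{0,ij}^{ij}-2u_{0,j}^{ij}\pi_{,i}/\pi$, whose control is precisely the smoothness statement about $u_0^{ij}$. Two smaller points: the outer-face boundary term produced by your second integration by parts should carry the sign that matches $-C_\Lambda$ in \eqref{+4N'}; and your singularity matching near the Weyl walls ($\partial\chi/\partial x\sim 1/\alpha(y)$, $\partial^2\chi/\partial x^2\sim 1/\alpha(y)^2$ against $\pi\sim\alpha(y)^2$, $\pi_{,i}\sim\alpha(y)$) is correct and is exactly what legitimizes the integrations by parts.
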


\begin{proof}
First,  we note that $\chi(\cdot)$ is strictly convex on $\mathfrak a_+$ (cf. \cite[Lemma 3.7]{Del2}). Then by the convexity of $-\log\det$, we have
\begin{align}\label{+42}
&\ \ -\log\det(u_{,ij})+\chi(\nabla u)\notag\\
&\geq-\log\det(u_{0,ij})+\chi(\nabla u_0)-u_0^{ij}(u_{,ij}-u_{0,ij})+\left.{\frac{\partial\chi}{\partial x^i}}\right|_{x=\nabla u_0}(u_{,i}-{u_{0,i}}).
\end{align}
By (\ref{N}), it follows
\begin{align}\label{42}
&\ \ \  \mathcal N(u)\notag\\
&\geq-\int_{2P_+}u_0^{ij}u_{,ij}\pi\,dy
+\int_{2P_+}u_{,i}\left.{\frac{\partial \chi}{\partial x^i}}\right|_{x=\nabla u_0}\pi\,dy
+4\int_{2P_+}\langle\rho, \nabla u\rangle\pi\,dy-C_0
\end{align}
for some constant $C_0$ independent of $u$.
Since $\nabla \chi(x)+4\rho_i x^i$ vanishes at infinity away from Weyl walls and $\pi(y)$ vanishes quadratically along any Weyl wall,
\begin{equation}\label{410}
\int_{\partial(2P_+)}\left(\left.{\frac{\partial \chi}{\partial x^i}}\right|_{x=\nabla u_0}+4\rho_i\right)\nu^iu\pi\, d\sigma_0 =0.\nonumber
\end{equation}
Thus by integration by parts for the first integral terms in (\ref{42}), and then by Lemma \ref{bound-measure}, we get
\begin{align}\label{49}
\mathcal N(u)&\geq-\sum_A\int_{F_A}{\frac 2{\lambda_A}}u\pi\langle y,\nu_A\rangle \,d\sigma_0-\int_{2P_+}u_{0,ij}^{ij}u\pi \,dy-2\int_{2P_+}u_{0,j}^{ij}\pi_{,i} u \,dy\notag\\
& \ \ \ -\int_{2P_+}4\langle\rho,\nabla\pi\rangle udy+\int_{2P_+}Q\pi\,dy.
\end{align}

On the other hand, by  (\ref{+5119}) and Lemma \ref{norm-convex}, we have
\begin{align}\label{43}
0\leq\int_{2P_+}4\langle\rho,\nabla{\pi}\rangle u\,dy&=\mathcal L(u)+\bar S\int_{2P_+}u\pi\,dy-\sum_A{\frac2{\lambda_A}}\int_{F_A}u\langle y,\nu_A\rangle\pi\,d\sigma_0\notag\\
&\leq \mathcal L(u)+C\int_{\partial(2P_+)}u\langle y,\nu\rangle\pi\,d\sigma_0.
\end{align}
 Moreover,
\begin{align}\label{u-0-ij}
\int_{2P_+}\left|u^{ij}_{0,ij}\right|u\pi \,dy\leq C_1\int_{2P_+}u\pi \,dy,
\end{align}
\begin{align}\label{u-0-ij-2}\int_{2P_+}\left|u^{ij}_{0,j}\pi_{,i}\right| u \,dy
&\leq C_2\int_{2P_+}\langle\rho,\nabla\pi\rangle u \,dy\notag\\
&\leq C_2\mathcal L(u)+C_2'\int_{\partial(2P_+)}u\langle y,\nu\rangle\pi\,d\sigma_0,
\end{align}
  since $u^{ij}_{0,ij}$, $u^{ij}_{0,j}$ are smooth up to the boundary, where $C_1, C_2, C_2'>0$ are constants independent of $u$.
  Hence, substituting (\ref{43}),  (\ref{u-0-ij}) and (\ref{u-0-ij-2}) into (\ref{49}), we obtain (\ref{+4N'}).
\end{proof}

\subsection{Estimate of $Q$}
Since $Q$ is singular and $\pi$ vanishes along each $W_\alpha$, we shall give an explicit estimate for the singular order of $Q$.
In the following, we will divide $2P$ into two parts $2P=2P'\cup U$, where $U$ is a union of small neighborhoods of faces of codim$\geq2$ which are contained in $\cup_{\alpha\in\Phi_+} W_\alpha$, and  $2P'_+= 2P'\cap \bar{\mathfrak a}^*_+$, where $2P'$  is a $W$-invariant polytope whose boundary intersects the Weyl walls orthogonally.
By Proposition \ref{non-linear part}, to finish the proof of Proposition \ref{proper-red}, it suffices to prove

\begin{prop}\label{Q int} There are constants $C_I, C_{II}>0$ independent of $u$ such that
\begin{equation}\label{604}
\left|\int_{2P_+}Qu\pi\, dy\right|
\leq C_I\int_{2P_+}\langle\rho,{\nabla\pi}\rangle u\, dy+C_{II}\int_{{2P_+}}u\pi\, dy,~\forall u\in\hat{\mathcal C}_{\infty,W}.\nonumber
\end{equation}
\end{prop}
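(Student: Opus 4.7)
The key observation is that although each of the three summands in $Q$ develops a singularity of order $1/\alpha(y)^2$ as $y$ approaches a Weyl wall $W_\alpha$, the factor $\pi(y)=\prod_{\alpha\in\Phi_+}\alpha(y)^2$ multiplying $Q$ in the integrand vanishes to second order along every Weyl wall, so that $Q\pi$ is actually bounded near the smooth part of each wall. The plan is to (i) extract the singular structure of $Q$ term by term, (ii) after multiplication by $\pi$, exhibit a pointwise bound whose ``residual'' singularity has the shape $\langle\rho,\alpha\rangle/\alpha(y)$, controlled by $\langle\rho,\nabla\pi\rangle/\pi$, and (iii) handle the higher codimension intersections of walls by a direct size estimate.

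\textbf{Key steps.} First I would substitute the explicit formulas $\chi_{,i}(x)=-2\sum_\alpha\alpha_i\coth\alpha(x)$ and $\chi_{,ik}(x)=2\sum_\alpha\alpha_i\alpha_k/\sinh^2\alpha(x)$, together with (2.10) for $\pi_{,i}/\pi$ and $\pi_{,ij}/\pi$, into the definition \eqref{59} of $Q$, rewriting it as a double sum over $\Phi_+\times\Phi_+$. Second, invoking the $W$-invariance of $u_0$ together with Guillemin's formula (2.12), one sees that $\alpha(\nabla u_0(y))$ vanishes on $W_\alpha$ to the same order as $\alpha(y)$, so that $\coth\alpha(\nabla u_0)\sim c_\alpha/\alpha(y)$ and $1/\sinh^2\alpha(\nabla u_0)\sim c_\alpha^2/\alpha(y)^2$ near $W_\alpha$; this allows a Taylor expansion of each diagonal contribution of $Q$ to order two in $1/\alpha(y)$. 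The cross terms with two distinct roots have combined singularity $1/(\alpha(y)\beta(y))$, likewise dominated by $\pi$.

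Third, multiplying through by $\pi$ and collecting terms, one deletes a small $W$-invariant tube $U$ around the union of pairwise intersections $W_\alpha\cap W_\beta$ inside $2P$, set $2P'_+=2P_+\setminus U$, and verify the pointwise estimate
\[
|Q(y)|\,\pi(y) \leq C_I\,\langle\rho,\nabla\pi(y)\rangle + C_{II}\,\pi(y),\qquad y\in 2P'_+.
\]
Integrating against $u\geq 0$ gives the contribution of $2P'_+$ to the desired bound. Fourth, on $U$ we observe that $\pi$ vanishes to order at least four while the worst singularity of $Q$ is $\sum_j 1/\alpha_j(y)^2$, so $Q\pi$ is bounded and integrable; choosing $U$ with small $\pi\,dy$-measure and exploiting that $u-u_0\in C^\infty(\overline{2P})$ yields bounded values of $u$ near the Weyl walls, the contribution $\int_U|Q|u\pi\,dy$ is absorbed into $C_{II}\int_{2P_+}u\pi\,dy$.

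\textbf{Main obstacle.} The principal technical difficulty is tracking the cancellation in the third step uniformly when several Weyl walls meet or when a Weyl wall meets an outer face $F_A$: there both $u_0^{ij}$ (with vanishing eigenvalues along $F_A$, cf.\ Lemma \ref{bound-measure}) and $\coth\alpha(\nabla u_0)$ (with $1/\alpha(y)$ blow-up along $W_\alpha$) become singular simultaneously, and one must separate the $\alpha$-components tangential and transverse to $F_A$ using the explicit form of $\nabla u_0$ from (2.13). A secondary point is to use the positivity $\langle\rho,\alpha\rangle>0$ for every $\alpha\in\Phi_+$ to guarantee that the residual singular pieces really are dominated by $\langle\rho,\nabla\pi\rangle/\pi$ rather than just by some weaker quantity.
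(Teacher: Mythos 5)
Your overall strategy---dominating $|Q|$ pointwise by $C\langle\rho,\nabla\pi\rangle/\pi$ plus a bounded remainder and then integrating against $u\pi$---is the right shape (it is essentially what the paper does, and Remark \ref{remark-Q} even notes that the pointwise bound alone suffices). However, there are two genuine gaps. First, your second step asserts that $\alpha(\nabla u_0)$ vanishes to the same order as $\alpha(y)$ on $W_\alpha$, so that $\coth\langle\alpha,\nabla u_0\rangle\sim c_\alpha/\alpha(y)$. This is only true away from the outer faces: by \eqref{D of Guillemin func}, near a point of $W_\alpha\cap F_{a}$ where $F_a$ is not orthogonal to $W_\alpha$, the behaviour depends on the ratio $\alpha(y)/l_a(y)$, and when this ratio is large one has $\langle\alpha,\nabla u_0\rangle$ bounded away from $0$ and $\coth\langle\alpha,\nabla u_0\rangle=O(1)$, not $\sim 1/\alpha(y)$. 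You flag this as the ``main obstacle'' but do not resolve it; the resolution requires the three-regime case analysis of Lemma \ref{521}, the matching expansion of $u_0^{ij}\alpha_i\alpha_j$ in Lemma \ref{u^-1} so that the two $1/\alpha(y)^2$ singularities cancel to leading order in the small-ratio regime, and---crucially, in the large-ratio regime---the integrality $\alpha(u_{a})\in\mathbb Z_{>0}$, hence $\alpha(u_a)\geq 1$, to make $\left(l_{a_0}(y)/\alpha(y)\right)^{\alpha(u_{a_0})-1}$ bounded in \eqref{second-derivative}. None of these ingredients appear in your sketch, and without them the claimed pointwise estimate on the region containing wall--face intersections is not established.

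Second, your treatment of the tube $U$ is incorrect as stated. Boundedness of $Q\pi$ on $U$ gives $\int_U|Q|u\pi\,dy\leq C\int_U u\,dy$, with Lebesgue measure $dy$, not $u\pi\,dy$; and since $\pi$ vanishes on the Weyl walls, $\int_U u\,dy$ is not trivially controlled by $\int_{2P_+}u\pi\,dy$. Your justification that ``$u-u_0\in C^\infty(\overline{2P})$ yields bounded values of $u$'' cannot work because the constants must be uniform over all $u\in\hat{\mathcal C}_{\infty,W}$, and these functions are only normalized by $\min u=u(O)=0$, not uniformly bounded above. Closing this gap is exactly the content of the paper's Lemma \ref{52}, which proves $\int_{2P'_+}u\,dy\leq C_{P'}\int_{2P'_+}u\pi\,dy$ by an iterated reflection/convexity argument exploiting the $W$-invariance of $u$ (comparing the integral over a shrunken neighbourhood $\Sigma^O_{y_0}$ of a wall point with the integral over $\Sigma_{y_0}\setminus\Sigma^O_{y_0}$, and peeling off one wall at a time). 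Some such argument is indispensable wherever you rely on boundedness of $Q\pi$ rather than the sharper bound $|Q|\leq C\langle\rho,\nabla\pi\rangle/\pi$.
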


\subsubsection{Integral estimate on $2P'_+$}
It is easy to see  that  $Q\pi$  is uniformly bounded in  $2P_+$. Then
\begin{equation}\label{q-pi}
\int_{2P'_+} Qu  \pi \,  dy\leq C \int_{2P'_+}u \, dy,\ \ \ \forall u\in\hat {\mathcal C}_{\infty,W}.
\end{equation}
In this subsection, we further prove

\begin{lem}\label{52}
Suppose that $2P'(\subset2P)$ is a $W$-invariant polytope as above.
Then there exists a constant $C_{P'}$ independent of $u$ such that
\begin{equation}\label{53}
\int_{2P'_+}u\,dy\leq C_{P'} \int_{2P'_+}u\pi \,dy,\ \ \ \forall u\in\hat {\mathcal C}_{\infty,W}.
\end{equation}
\end{lem}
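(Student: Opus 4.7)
The inequality is trivial on the subregion where $\pi$ is uniformly positive, so the content of the lemma is to control $u$ in neighborhoods of the Weyl walls $W_\alpha$, where $\pi$ degenerates to order $2$. My plan is to exploit the combination of convexity of $u$ and its $W$-invariance: the reflection $s_\alpha\in W$ forces $u$ to be even across $W_\alpha$, and convexity then forces it to be nondecreasing as one moves perpendicularly away from the wall. Consequently $u$ is smallest exactly where $\pi$ is smallest, so the missing weight near a wall can be paid for by values of $u$ on a parallel slab further inside, where $\pi$ is uniformly positive.

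Concretely, I would fix a small $\delta>0$ and decompose $2P'_+=\Omega_\delta\cup(2P'_+\setminus\Omega_\delta)$ with $\Omega_\delta=\{y\in 2P'_+:\langle\alpha,y\rangle\ge\delta\ \forall\alpha\in\Phi_+\}$. On $\Omega_\delta$ one has $\pi\ge\delta^{2|\Phi_+|}$, giving the trivial bound $\int_{\Omega_\delta}u\,dy\le\delta^{-2|\Phi_+|}\int_{\Omega_\delta}u\pi\,dy$. The complement lies in $\bigcup_{\alpha\in\Phi_+}U_\alpha$, where $U_\alpha=\{y\in 2P'_+:\langle\alpha,y\rangle<\delta\}$. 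Since neighborhoods of codim$\,\ge 2$ strata of $\bigcup_\beta W_\beta$ have been absorbed into $U$, on each $U_\alpha$ the remaining factors $\langle\beta,y\rangle$ with $\beta\neq\alpha$ are uniformly bounded below; the orthogonal-intersection hypothesis then allows me to parametrize $U_\alpha$ as $\{y_\parallel+t\xi:y_\parallel\in W_\alpha\cap 2P'_+,\ 0\le t<t_0\}$ with $\xi$ a unit vector normal to $W_\alpha$ and $t_0>0$ uniform.

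The key step is the slice-wise estimate. By $W$-invariance under $s_\alpha$, the map $t\mapsto u(y_\parallel+t\xi)$ is even; combined with convexity, it is nondecreasing in $|t|$. Choosing a fixed reference interval $[t_1,t_2]$ with $t_1>t_0$ such that the associated tube lies inside $\Omega_\delta$, monotonicity and the uniform positivity of $\pi$ on that tube yield
\begin{align*}
\int_0^{t_0}u(y_\parallel+t\xi)\,dt\le\frac{t_0}{t_2-t_1}\int_{t_1}^{t_2}u(y_\parallel+t\xi)\,dt\le C\int_{t_1}^{t_2}u(y_\parallel+t\xi)\,\pi(y_\parallel+t\xi)\,dt.
\end{align*}
Fubini in $y_\parallel$ followed by summation over $\alpha\in\Phi_+$ delivers \eqref{53}.

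The main obstacle is the geometric/measure-theoretic aggregation at the end: one must choose the reference interval $[t_1,t_2]$ uniformly in $y_\parallel$ and verify that the assembled tubes inside $\Omega_\delta$ overlap with bounded multiplicity, so that the slice-wise inequality genuinely assembles into a bound by $\int_{\Omega_\delta}u\pi\,dy$ rather than something larger. This reduces to checking that the orthogonal height of $2P'$ above $W_\alpha\cap 2P'_+$ is uniformly bounded below, which is guaranteed by the orthogonality of $\partial 2P'\cap W_\alpha$ together with the removal of codim$\,\ge 2$ Weyl-wall intersections into $U$.
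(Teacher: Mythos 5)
Your single-wall slice argument is essentially the paper's mechanism for the easiest case, but the proposal rests on a false premise that makes it collapse precisely where the lemma is hard. You assert that, because neighborhoods of codimension $\geq 2$ strata of $\bigcup_\beta W_\beta$ "have been absorbed into $U$", on each $U_\alpha=\{y:\langle\alpha,y\rangle<\delta\}$ the other factors $\langle\beta,y\rangle$ are uniformly bounded below. That is not what $U$ is: in the paper, $U$ is a neighborhood of $\Omega^*\subset\partial(2P)$, the locus where Weyl walls meet \emph{outer faces} of $2P$ non-orthogonally. The mutual intersections of the Weyl walls themselves (which pass through the interior of $2P$ and all contain the origin) are untouched and occupy a large part of $2P'_+$; indeed the paper's own proof and Figure 1(b) are organized around points $y_0$ lying on $k\geq 2$ walls. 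Near such a point, $\pi$ vanishes to order $2$ in each of $k$ independent directions, your reference tube at distance $t\in[t_1,t_2]$ from $W_\alpha$ may still sit inside $U_\beta$ where $\pi$ is not bounded below, and the final inequality on each slice fails. A second, concrete symptom of the same problem: for distinct simple roots one has $\langle\alpha,\beta\rangle\leq 0$, so moving from $y_\parallel\in W_\alpha$ along the inward normal $\xi\propto\alpha$ \emph{decreases} $\beta(y)$; hence near $W_\alpha\cap W_\beta$ the tube $y_\parallel+t\xi$ exits $\bar{\mathfrak a}^*_+$ and $U_\alpha$ is not the product $\bigl(W_\alpha\cap 2P'_+\bigr)\times[0,t_0)$ you integrate over.

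What is missing is an induction on the number $k$ of walls through $y_0$, which is the heart of the paper's proof. There, one works in the $k$-plane $H_{y_0}$ spanned by $\alpha_{(1)},\dots,\alpha_{(k)}$, uses $W$-invariance plus convexity to show that the radial scaling toward $y_0$ decreases $u$ (so $\int_{\Sigma_{y_0}^{O}}u\,dy\leq\int_{\Sigma_{y_0}\setminus\Sigma_{y_0}^{O}}u\,dy$), and then decomposes $\Sigma_{y_0}\setminus\Sigma_{y_0}^{O}$ into pieces that are either in $(2P'_+)_\epsilon$, where $\pi$ is bounded below, or meet at most $k-1$ walls; iterating $k$ times reduces everything to the region where $\pi\geq\epsilon_k^{\,n-r}$. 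Your one-step monotonicity is the $k=1$ instance of this; to repair the proposal you would need to either prove the multi-wall case by such an iteration or replace the normal-direction tube by the radial comparison in the orthogonal $k$-plane. As written, the argument proves the lemma only for root systems of rank one (or when all positive roots are mutually orthogonal), not in general.
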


\begin{proof}
Set $(2{P'}_+)_{\epsilon}:=(\cap_{\alpha\in\Phi_+}\{y|\langle\alpha,y\rangle>\epsilon\})\cap\overline{2 P'}_+$ for $\epsilon>0$. Then
\begin{equation}\label{54}
\pi(y)\geq\epsilon^{n-r},~\forall y\in(2{P'}_+)_{\epsilon},
\end{equation}
since the number of elements of $\Phi_+$ is  $\frac{n-r}{2}$.
Consequently
\begin{equation}\label{54}
\int_{2P'_+}u\,dy\leq\epsilon^{r-n}\int_{(2P'_+)_\epsilon}u\pi\,dy+\int_{2P'_+\setminus(2P'_+)_\epsilon}u\,dy,
\end{equation}
It suffices to estimate the second term for some fixed $\epsilon$.
Let ${y_0}\in \overline{2 P'_+}$ be a point which  lies on the intersection of exactly $k$ Weyl walls. For example,  ${y_0}\in\tilde W_k:=\cap_{i=1}^kW_{\alpha_{(i)}}$  and $y_0$ is away from other walls. Without loss of generality, we may assume that $\alpha_{(1)},...,\alpha_{(k)}$ are simple roots in $\Phi_+$. Then $\tilde W_k$ is an $(r-k)$-dim linear subspace in  $\mathfrak a^*$. Take $I_{y_0}$ a cubic relative neighbourhood of ${y_0}$ in $\tilde W_k\cap \overline{2 P'_+}$.
Consider the affine $k$-dim plane
$$H_{y_0}:=\{{y_0}+\sum_{i=1}^k\tau_i\alpha_{(i)}|\tau_i\in\mathbb R\},$$
which is the unique $k$-plane passing through $y_0$ and orthogonal to all
$W_{\alpha_{(1)}},...,W_{\alpha_{(k)}}$. By our assumptions, we can take a small relative neighbourhood $U_{y_0}$ of ${y_0}$ in $\overline{2 P'_+}\cap H_{y_0}$, which is an $k$-dimensional  polytope, such that $\partial U_{y_0}\backslash \cup_{i=1}^kW_{\alpha_{(i)}}$ intersects $W_{\alpha_{(1)}},...,W_{\alpha_{(k)}}$ orthogonally and is away from other Weyl walls. Let ${\frac 1 2}U_{y_0}$ be the shrinking of $U_{y_0}$ with centre at ${y_0}$ at rate ${\frac 1 2}$. Take $U_{y_0}$ small enough, one can assume $\Sigma_{y_0}:=U_{y_0}\times I_{y_0}$ and $\Sigma_{{y_0}}^{O}:={\frac 1 2}U_{y_0}\times I_{y_0}$ are contained in $\overline{2 P'_+}$, whose closures are away from other Weyl walls (See Figure 1).

\begin{figure}[hbp]
\centering
(a)\includegraphics[height=1.5in]{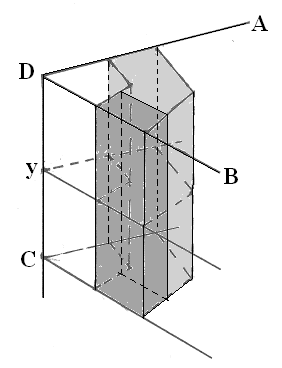}
(b)\includegraphics[height=1.5in]{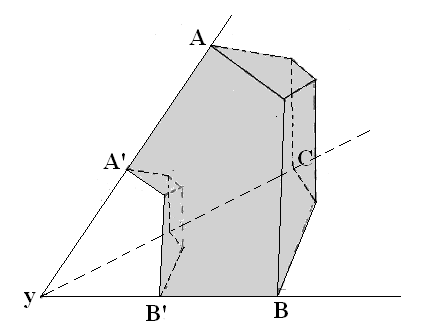}
\caption{The dark area is $\Sigma_{y_0}\backslash\Sigma_{y_0}^O$ in a  $3$-dimension $2P_+$.  In (a), ${y_0}$ lies on two walls $ADC$ and $BDC$,  $ADB$ is an outer face,  the line segment $\overline{DC}$ stands for $I_{y_0}$, and   the deeper dark
area  presents  a subpolytope $P_i$  of  case  (2) in (a).   In (b), ${y_0}$ lies on three walls.}
\end{figure}

Let $y=(y', y'')$ be any  point in $\Sigma_{y_0}$. Fix a $y''\in I_{y_0}$.  Since $u(y',y'')$ is a strictly convex function for $y'$, by the $W$-invariance of $u$, it must attains its minima at $\tilde y_0=( y_0',  y'')$, where ${y_0}'$ is the coordinate component  of ${y_0}$ in  $U_{y_0}$. By the convexity of $u$, we have
\begin{align}\label{55}
\int_{\Sigma_{y_0}^O}u\,dy&=\int_{I_{y_0}}\left(\int_0^{\frac 1 2}\int_{\partial U_{y_0}\backslash \tilde W_k}u(ty',y'')\langle y',\nu\rangle\,d\sigma_{\partial U_{y_0}}(y')\wedge dt\right )\wedge dy''\notag\\
&\leq \int_{I_{y_0}}\left(\int_{\frac 1 2}^1\int_{\partial U_{y_0}\backslash \tilde W_k}u(ty',y'')\langle y',\nu\rangle \,d\sigma_{\partial U_{y_0}}( y')\wedge dt\right)\wedge dy''\notag\\
&=\int_{\Sigma_{y_0}\backslash \Sigma_{y_0}^O}u\,dy.
\end{align}

We divide ${\Sigma_{y_0}\backslash \Sigma_{y_0}^O}$ into finitely many subpolytopes $P_1,...,P_m$ in two types:
\begin{itemize}
\item[(1)] $P_i$ is contained in some $(2P'_+)_{\epsilon}$;
\item[(2)] $\bar{P_i}$ intersects at most $(k-1)$ Weyl walls and its outer faces are orthogonal to these walls.
\end{itemize}
For $P_i$ of type  $(1)$, by (\ref{54}), we have
\begin{equation}\label{+54}
\int_{P_i}u\, dy\leq  \epsilon^{-(n-r)}\int_{P_i}u\pi\,dy,~\forall u\geq0.
\end{equation}
For $P_i$ of type $(2)$, we regard $P_i$ as $\Sigma_{y^{(1)}}$ for some $y^{(1)}$ which lies on at most $(k-1)$ Weyl walls. Then according to the above argument, there is a subset $\Sigma_{y^{(1)}}^O$ of $\Sigma_{y^{(1)}}$ such that as in (\ref{55}),
$$\int_{\Sigma_{y^{(1)}}^O}u \,dy\le \int_{\Sigma_{y^{(1)}}\setminus \Sigma_{y^{(1)}}^O}u\, dy.$$
Moreover, we have finitely many  subpolytopes $\{P_j^{(1)}\}_j$, where $P_j^{(1)}$ is either contained in some $(2P'_+)_{\epsilon_1}$ for some $\epsilon_1>0$,  or intersects at most $(k-2)$ Weyl walls  such that
$\Sigma_{y^{(1)}}\setminus \Sigma_{y^{(1)}}^O=\cup_j P_j^{(1)}$.

Thus we can iterate the above progress for finite times until each $P_j^{(k)}$ in  $\Sigma_{y^{(k)}}\setminus \Sigma_{y^{(k)}}^{O}$ is  of type (1) for some $\epsilon_k>0$ while $P_j^{(k-1)}$ is of type (2). Hence by the relations (\ref{55}) and (\ref{+54}), we can find a small number $\delta_0>0$ such that
\begin{equation}\label{near-wall}\int_{\Sigma_{y_0}}u \,dy\le C \delta_0^{-(n-r)} \int_{2P'_+}u\pi \, dy.\nonumber
\end{equation}
Since $\partial (2P_+)\cap (\cup_{\alpha\in\Phi_+} W_\alpha)$ is compact, we can cover it by finitely many $\{\Sigma_{y_p}\}$. Choose $\epsilon_0>0$ such that $2P'_+\setminus(2P'_+)_{\epsilon_0}\subset\cup_p\Sigma_{y_p}$.
Then (\ref{53})  follows from (\ref{54}).
\end{proof}

\begin{rem} If  $M$ is a toroidal compactification of $G$ \cite{Del3}, we can take $P'=P$ and then Proposition \ref{proper-red}   follows from Lemma \ref{52} directly.  Lemma \ref{52} will  be also  used in Section 6 (cf. Lemma \ref{extend u}).
\end{rem}

\subsubsection{Asymptotic estimate of $Q$ near $W_\alpha$ }

In general,  a Weyl wall $W_\alpha$   could not  intersect  a  $(r-1)$-dimensional  face $F_{\tilde A}$  of  $2P$ orthogonally.
In this case,  if let  $s_\alpha\in W_\alpha $ be the reflection with respect to $W_\alpha$, then by the $W$-invariance of $2P$, $F_{\tilde A,\alpha}:=s_\alpha(F_{\tilde A})$ is again a face of $2P$.
For simplicity, we  denote  $\mathfrak F_\alpha$ 
$$\mathfrak F_\alpha=\left\{F_{\tilde A}\subset\{y|~\langle \alpha, y\rangle\geq0 \}|~ F_{\tilde A} \neq F_{\tilde A,\alpha}\right\}.$$
We note that  $F_{\tilde A}$ may not intersect $W_\alpha$.

In order to make the computation of the quantity $Q$ more explicitly,   associated  to  each $W_\alpha$,  we relabel the $(r-1)$-dimensional faces of $2P$ as follows:

\begin{enumerate}
\item
Faces $ F_{a}\in \mathfrak F_\alpha$.  We denote them by
    $F_{a}=\{y\in\partial(2P)|~l_{a}(y)=0\},\,a=1,..,d_1$. By the convexity of $2P$, we have $\alpha(u_{a})>0$. Since $\alpha\in\mathfrak M$ and $u_a\in\mathfrak N$, $\alpha(u_a)\in\mathbb{Z}_{>0}$.

\item
Faces $ F_{a,\alpha}$ with $ F_{a}\in\mathfrak F_\alpha$.  We denote them by $F_{a,\alpha}=\{y\in\partial(2P)|~l_{a,\alpha}(y)=0\}$, where  $l_{a,\alpha}(y)$ satisfies
\begin{align}\label{symmetry}  l_{a,\alpha}(y)=l_{a}(y)+{\frac{2\alpha(u_{a})}{|\alpha|^2}}\langle\alpha,y\rangle.
\end{align}

\item
Faces $ F_{b}$ which are orthogonal to $W_\alpha$. By the convexity of $2P$, $F_b\cap W_\alpha\neq\emptyset$. We denote them by $F_b=\{y\in\partial(2P)|~l_{b}(y)=0\},\,b=1,..,d_2$.
Since $\alpha(u_b)=0$, $F_b$ is invariant under $s_\alpha$.
\end{enumerate}

Under the above notations,
we rewrite Guillemin's function $u_0$ in (\ref{Guillemin func}) as
$$u_0={\frac 1 2}\sum_{a}\left(l_{a}(y)\log l_{a}(y)+l_{a,\alpha}(y)\log l_{a,\alpha}(y)\right)+{\frac 1 2}\sum_bl_b(y)\log l_b(y).$$
Thus
\begin{equation}\label{D of Guillemin func}
\langle\alpha,\nabla u_0\rangle={\frac 1 2}\sum_{a}\alpha(u_a)\log\left(1+{\frac{2\alpha(u_a)\alpha(y)}{|\alpha|^2l_{a}(y)}}\right)
\end{equation}
and
\begin{equation}\label{D2 of Guillemin func}
{u_0}_{,ij}={\frac 1 2}\sum_{a}\left({\frac{u_a^iu_{a}^j}{l_{a}(y)}}+{\frac{u_{a,\alpha}^iu_{a,\alpha}^j}{l_{a,\alpha}(y)}}\right)
+{\frac 1 2}\sum_b\left({\frac{u_b^iu_b^j}{l_b(y)}}\right).
\end{equation}

\begin{lem}\label{u^-1}
Let  $y_0\in  W_\alpha$.  Then
\begin{equation}\label{+u1}
u_0^{ij}\alpha_i\alpha_j=\left\{
\begin{aligned}
&\left(\sum_{a}{\frac{(\alpha(u_{a}))^2}{l_{a}(y)}}\right)^{-1}|\alpha|^4+O(\alpha(y)), ~\text{if } {\frac{\alpha(y)}{l_{a}(y)}\to0}, ~\forall ~ a,\\[4pt]
&O(\alpha(y)),~\text{otherwise}.
\end{aligned}\nonumber
\right.
\end{equation}
\end{lem}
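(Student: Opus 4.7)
The plan is to sandwich $u_0^{ij}\alpha_i\alpha_j$ between two explicit expressions derived from the rank-one decomposition \eqref{D2 of Guillemin func} and then match them asymptotically. Throughout I write $t_a:=\alpha(u_a)\in\mathbb{Z}_{>0}$, and use the variational characterization
\[
u_0^{ij}\alpha_i\alpha_j = \sup_{\xi\neq 0}\frac{(\alpha_i\xi^i)^2}{u_{0,kl}\xi^k\xi^l},
\]
together with the algebraic identity $u_a - u_{a,\alpha}=\tfrac{2t_a}{|\alpha|^2}\alpha$, which is a direct consequence of \eqref{symmetry} and rewrites $\alpha_i\xi^i=\tfrac{|\alpha|^2}{2t_a}(u_a^i\xi_i-u_{a,\alpha}^i\xi_i)$ for every $a=1,\dots,d_1$.

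For the upper bound, I would take a weighted average of these identities with the optimal weights $\mu_a = t_a/(l_a+l_{a,\alpha})$, apply Cauchy--Schwarz, and use the elementary inequality $(x-y)^2/(p+q)\le x^2/p+y^2/q$ to relate the outcome to $u_{0,kl}\xi^k\xi^l$ (the $b$-block of \eqref{D2 of Guillemin func} is non-negative and can be discarded in this step). The result is
\[
u_0^{ij}\alpha_i\alpha_j\le \frac{|\alpha|^4}{2\sum_a t_a^2/(l_a+l_{a,\alpha})}.
\]
For the matching lower bound I insert $\xi=\alpha$ into the variational formula and use $\alpha(u_b)=0$ to kill the $b$-block, which gives
\[
u_0^{ij}\alpha_i\alpha_j\ge \frac{2|\alpha|^4}{\sum_a t_a^2(l_a+l_{a,\alpha})/(l_al_{a,\alpha})}.
\]

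To match these two bounds I substitute $l_{a,\alpha}=l_a+2t_a\alpha(y)/|\alpha|^2$ and expand both denominators to first order in $\alpha(y)$. Both expansions produce the common zeroth-order piece $\sum_a t_a^2/l_a$, while the first-order corrections take the form $\alpha(y)\,\mathcal{R}(y)$ with $\mathcal{R}(y)\sim\bigl(\sum_a t_a^3/l_a^2\bigr)/\bigl(\sum_a t_a^2/l_a\bigr)^2$. Under the hypothesis $\alpha(y)/l_a(y)\to 0$ for every $a$, the Cauchy--Schwarz estimate $(\sum_a t_a^2/l_a)^2\ge \sum_a t_a^4/l_a^2$ combined with $t_a\ge 1$ bounds $\mathcal{R}(y)$ uniformly; thus both bounds agree up to $O(\alpha(y))$, establishing the first case. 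In the complementary case there is an index $a_0$ with $l_{a_0}(y)\lesssim \alpha(y)$; then $l_{a_0}+l_{a_0,\alpha}\lesssim\alpha(y)$, the single $a_0$-term makes the denominator of the upper bound at least $\gtrsim 1/\alpha(y)$, and one concludes $u_0^{ij}\alpha_i\alpha_j=O(\alpha(y))$.

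The main technical obstacle is verifying the uniform boundedness of $\mathcal{R}(y)$ in the first case, since some $l_a(y)$ are allowed to vanish (as long as $\alpha(y)$ does so strictly faster); this is where the integrality $t_a\in\mathbb{Z}_{>0}$ enters in an essential way, for without a uniform lower bound on $t_a$ the ratio $\mathcal{R}$ cannot be controlled. A secondary delicate point is to check that $\mu_a=t_a/(l_a+l_{a,\alpha})$ are indeed the optimizers of the Cauchy--Schwarz step, which one verifies by direct minimization of $\bigl(\sum_a\mu_a^2(l_a+l_{a,\alpha})\bigr)/\bigl(\sum_a\mu_a t_a\bigr)^2$.
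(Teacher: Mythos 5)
Your argument is correct, and it arrives at the same kind of two-sided sandwich as the paper but by a different mechanism. The paper works at the matrix level: starting from (\ref{D2 of Guillemin func}) it forms $M_1\le (u_{0,ij})\le M_2$ by replacing both denominators $l_a,\,l_{a,\alpha}$ in each paired rank-one block by the larger (resp.\ smaller) of the two, observes that $\alpha$ is a common eigenvector of $M_1$ and $M_2$ (the same reflection identity $u_{a,\alpha}=u_a-\tfrac{2\alpha(u_a)}{|\alpha|^2}\alpha$ and the fact $\alpha(u_b)=0$ that you use), inverts, and then quotes Lemma \ref{appendix} to show that the two scalar bounds $|\alpha|^4\bigl(\sum_a t_a^2/l_{a,\alpha}\bigr)^{-1}$ and $|\alpha|^4\bigl(\sum_a t_a^2/l_a\bigr)^{-1}$ differ by $O(\alpha(y))$. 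You instead use the dual variational characterization of $u_0^{ij}\alpha_i\alpha_j$, with the test vector $\xi=\alpha$ for the lower bound and an optimally weighted Cauchy--Schwarz for the upper bound; this produces the sharper sandwich $|\alpha|^4\bigl(\sum_a t_a^2/H_a\bigr)^{-1}\le u_0^{ij}\alpha_i\alpha_j\le|\alpha|^4\bigl(\sum_a t_a^2/A_a\bigr)^{-1}$, where $H_a$ and $A_a$ are the harmonic and arithmetic means of $\{l_a,l_{a,\alpha}\}$, and your first-order expansion with the bound $\mathcal R\le 1$ closes the gap exactly as the appendix lemma does for the paper's cruder pair. What the paper's route buys is reusability: the matrices $M_1,M_2$ and the full matrix inequality $(u_0^{ij})-M_2^{-1}\ge 0$ are needed again in Lemma \ref{M_2} and in the estimate of the cross terms $I_{\alpha,\beta}$ in Lemma \ref{521}, whereas your bounds only control the single quadratic form in the direction $\alpha$, so the rest of Section 4 would still require something like $M_2$. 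What your route buys is a self-contained expansion (no appendix lemma) and slightly sharper constants. One small over-claim: the integrality $t_a\in\mathbb Z_{>0}$ is not what is essential for bounding $\mathcal R$ --- only a positive lower bound on $\min_a t_a$ is needed, and that is automatic since $\alpha(u_a)>0$ by convexity of $2P$ and there are finitely many faces; integrality genuinely matters later, in Case (iii) of Lemma \ref{521}, where the exponent $\alpha(u_{a_0})-1\ge 0$ is used.
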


\begin{proof}
Since $l_{a,\alpha}(y)>l_{a}(y)>0$, we have
\begin{align}
0&<M_1:={\frac12}\left[\sum_{a}{\frac{1}{l_{a,\alpha}(y)}}\left(u_{a}^iu_{a}^j+{u_{a,\alpha}^iu_{a,\alpha}^j}\right)
+\sum_b{\frac{u_b^iu_b^j}{l_b(y)}}\right]\notag\\
&\leq (u_{0,ij})\leq{\frac12}\left[\sum_{a}{\frac{1}{l_{a}(y)}}\left(u_{a}^iu_{a}^j+{u_{a,\alpha}^iu_{a,\alpha}^j}\right)
+\sum_b{\frac{u_b^iu_b^j}{l_b(y)}}\right]=:M_2.\notag
\end{align}
It is easy to see
\begin{align}\label{eigenvalue}M_1\alpha=\left(\sum_{a}{\frac{(\alpha(u_{a}))^2 }{|\alpha|^2l_{a,\alpha}(y)}}\right)\alpha,\ \ \  M_2\alpha=\left(\sum_{a}{\frac{(\alpha(u_{a}))^2}{|\alpha|^2l_{a}(y)}}\right)\alpha.
\end{align}
Thus
\begin{align}
&\left(\sum_{a}{\frac{(\alpha(u_{a}))^2}{l_{a}(y)}}\right)^{-1}={\frac{\alpha^T M_2^{-1}\alpha}{|\alpha|^4}}\notag\\
&\leq {\frac{u_0^{ij}\alpha_i\alpha_j}{|\alpha|^4}}\leq {\frac{\alpha^T M_1^{-1}\alpha}{|\alpha|^4}}=
\left(\sum_{a}{\frac{(\alpha(u_{a}))^2}{l_{a}(y)+2{\frac{\alpha(u_{a})}{|\alpha|^2}}\alpha(y)}}\right)^{-1}.\nonumber
\end{align}
Moreover, if ${\frac{\alpha(y)}{l_{a}(y)}}\to0$ for all $a$,  by Lemma \ref{appendix} in Appendix,
\begin{align}\label{large-matrix} 0&\leq
\left(\sum_{a}{\frac{(\alpha(u_{a}))^2}{l_{a}(y)
+2{\frac{\alpha(u_{a})}{|\alpha|^2}}\alpha(y)}}\right)^{-1}
-\left(\sum_{a}{\frac{(\alpha(u_{a}))^2}{l_{a}(y)}}\right)^{-1}\notag\\[4pt]
&=O(\alpha(y)).
\end{align}
This implies
$${\frac{u_0^{ij}\alpha_i\alpha_j}{|\alpha|^4}}\leq \left(\sum_{a}{\frac{(\alpha(u_{a}))^2}{l_{a}(y)}}\right)^{-1}+O(\alpha(y)).$$
The first case is proved.

In the second case,   there exists an  $F_{a_0}\in\mathfrak F_\alpha$ such that
$\alpha(u_{a_0})\not=0$ and ${\frac{\alpha(y)}{l_{a_0}(y)}}\geq\epsilon_0$ for some $\epsilon_0>0$. Then
\begin{align}\label{large-matrix-2}
\left(\sum_{a}{\frac{(\alpha(u_{ a}))^2}{l_{a}(y)}}\right)^{-1}=O(l_{a_0}(y))\end{align}
and
\begin{align}\label{large-matrix-3}\left(\sum_{a}{\frac{(\alpha(u_{a}))^2}{l_{a}(y)+2{\frac{\alpha(u_{ a})}{|\alpha|^2}}\alpha(y)}}\right)^{-1}=O(\alpha(y)).
\end{align}
Thus
$${\frac{u_0^{ij}\alpha_i\alpha_j}{|\alpha|^4}}\le  O(l_{a_0}(y))+O(\alpha(y)) = O(\alpha(y)).$$
 The lemma is proved.
\end{proof}

\begin{lem}\label{M_2}
Let  $y_0\in  W_\alpha $. Suppose that $y_0$ also lies on  another Weyl wall $W_\beta$.
Then as $y\to y_0$,  it holds
\begin{align}\label{m2-u-0}
\alpha^T\left((u^{ij}_0)-M_2^{-1}\right)\alpha=& O(\alpha(y)),
\end{align}\label{m2-beta}
\begin{align}\label{m2-u-0-beta}\beta^T\left((u^{ij}_0)-M_2^{-1}\right)\beta= O(\alpha(y)+\beta(y)).
\end{align}
\end{lem}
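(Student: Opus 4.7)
The plan is to exploit the explicit difference $M_2 - (u_{0,ij})$. Subtracting \eqref{D2 of Guillemin func} from the definition of $M_2$ used in the proof of Lemma \ref{u^-1}, and using the relation $l_{a,\alpha}(y) - l_a(y) = 2\alpha(u_a)\alpha(y)/|\alpha|^2$ coming from \eqref{symmetry}, one obtains
\begin{equation*}
M_2 - (u_{0,ij}) = \sum_a u_{a,\alpha}^i u_{a,\alpha}^j \cdot \frac{\alpha(u_a)\,\alpha(y)}{|\alpha|^2\, l_a(y)\, l_{a,\alpha}(y)}.
\end{equation*}
Under the natural reading of the hypothesis, namely that $y_0$ lies on the two Weyl walls $W_\alpha, W_\beta$ but on no outer face, the affine functions $l_a$, $l_{a,\alpha}$ and $l_b$ are all uniformly bounded below in a neighborhood of $y_0$, so both $M_2$ and $(u_{0,ij})$ are bounded and uniformly positive definite there. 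In particular the expression above gives, as an $r\times r$ matrix,
\begin{equation*}
M_2 - (u_{0,ij}) = O(\alpha(y)) \quad\text{as } y\to y_0.
\end{equation*}

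Next, apply the algebraic identity $A^{-1} - B^{-1} = A^{-1}(B-A)B^{-1}$ with $A = (u_{0,ij})$ and $B = M_2$ to obtain
\begin{equation*}
(u_0^{ij}) - M_2^{-1} = (u_{0,ij})^{-1}\bigl(M_2 - (u_{0,ij})\bigr)\, M_2^{-1}.
\end{equation*}
For \eqref{m2-u-0}, the eigenvalue identity \eqref{eigenvalue} gives $M_2^{-1}\alpha = \lambda_\alpha^{-1}\alpha$ with $\lambda_\alpha = \sum_a (\alpha(u_a))^2/(|\alpha|^2 l_a)$ bounded above and below near $y_0$. Contracting both sides of the matrix identity with $\alpha^T$ and $\alpha$, using the boundedness of $(u_{0,ij})^{-1}$ together with $M_2 - (u_{0,ij}) = O(\alpha(y))$, yields the claimed estimate $\alpha^T((u_0^{ij}) - M_2^{-1})\alpha = O(\alpha(y))$.

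For \eqref{m2-u-0-beta}, I will not invoke any eigenvector property for $\beta$, since in general $\beta$ is not an eigenvector of $M_2$. Instead I use that $M_2^{-1}$, $(u_{0,ij})^{-1}$, $\beta$ and $\beta^T$ are all uniformly bounded near $y_0$. Then $(M_2 - (u_{0,ij}))M_2^{-1}\beta = O(\alpha(y))$ as a vector, and further contraction with $\beta^T(u_{0,ij})^{-1}$ preserves this rate, producing $\beta^T((u_0^{ij}) - M_2^{-1})\beta = O(\alpha(y))$, which in particular is $O(\alpha(y) + \beta(y))$ as claimed. The only delicate point in the argument is the uniform positive-definiteness of $M_2$ in a neighborhood of $y_0$: this rests on the spanning property of the collection of face normals $\{u_a, u_{a,\alpha}, u_b\}$ in $\mathfrak{a}^*$ combined with the uniform positive lower bounds on $l_a$, $l_{a,\alpha}$ and $l_b$, which in turn use the hypothesis that $y_0$ avoids every outer face. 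Without this hypothesis an $l_a$ could vanish at $y_0$, forcing the sharper analysis of Lemma \ref{appendix} already carried out in the proof of Lemma \ref{u^-1}, but that case is not the content of the present lemma.
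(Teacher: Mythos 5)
Your computation of $M_2-(u_{0,ij})$ is correct, and within the regime you restrict to the argument works; the problem is the restriction itself. The lemma carries no hypothesis that $y_0$ avoids the outer faces of $2P$, and the situations in which it is actually invoked are exactly those where $y_0$ \emph{does} lie on an outer face: in the proof of Proposition \ref{Q int} the estimate is needed at points of $\Omega^*$, which by definition lie on $W_\alpha\cap F_{\tilde A}$, and in the proof of \eqref{Ialphabeta} in Lemma \ref{521} the cases (ii)--(iii) allow $l_{a_0}(y)=o(\alpha(y))$, i.e.\ $l_{a_0}(y_0)=0$. In that regime your argument collapses at every step: since $l_{a,\alpha}(y)=l_a(y)+\tfrac{2\alpha(u_a)}{|\alpha|^2}\alpha(y)$, the coefficient $\tfrac{\alpha(u_a)\alpha(y)}{|\alpha|^2 l_a(y) l_{a,\alpha}(y)}$ is of size $1/l_a(y)$ rather than $O(\alpha(y))$ once $\alpha(y)\gtrsim l_a(y)$, so $M_2-(u_{0,ij})$ is not $O(\alpha(y))$ in matrix norm; moreover $(u_{0,ij})$ and $M_2$ blow up and their inverses degenerate in the $u_a$-directions, so none of the uniform boundedness you rely on is available. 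The content of the lemma is precisely the cancellation in the quadratic forms $\alpha^T(\cdot)\alpha$ and $\beta^T(\cdot)\beta$ that survives this degeneration, and a matrix-norm argument cannot detect it.

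The paper's proof is built to handle exactly this. For \eqref{m2-u-0} it reads the estimate off Lemma \ref{u^-1}, which exploits that $\alpha$ is a common eigenvector of $M_1$ and $M_2$ together with the scalar estimate of Lemma \ref{appendix}. For \eqref{m2-u-0-beta} --- where you correctly observe that $\beta$ is not an eigenvector of $M_2$ and then abandon the eigenvector route --- the paper's key idea is to regroup the faces into orbits of the group $S_{\alpha,\beta}$ generated by both reflections and to form new comparison matrices $\hat M_1\le (u_{0,ij})\le M_2\le\hat M_2$ for which $\beta$ \emph{is} an eigenvector of both; since $\max_{s}\{l_{c,s}(y)\}-\min_{s}\{l_{c,s}(y)\}=\tilde\mu_1^c\alpha(y)+\tilde\mu_2^c\beta(y)$, the appendix estimate yields $\beta^T\bigl(\hat M_1^{-1}-\hat M_2^{-1}\bigr)\beta=O(\alpha(y)+\beta(y))$ and the claim follows by sandwiching. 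This construction is the missing idea in your proposal; without it (or some substitute) the lemma is not proved in the generality in which the paper uses it.
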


\begin{proof}
 (\ref{m2-u-0}) follows from  the estimate in Lemma \ref{u^-1} immediately.  It remains to prove (\ref{m2-u-0-beta}).
Let $S_{\alpha,\beta}\subset W$ be the group generated by the reflections $s_\alpha$ and  $s_\beta$. We want to relabel faces of $2P$ according to this $S_{\alpha,\beta}$-action. In each orbit $\{ S_{\alpha,\beta}F_{\tilde A}\}$,
where $\{F_{\tilde A}\}$ is a $(r-1)$-dimensional face,  we take a face $F_c$ such that $\alpha(u_c),\,\beta(u_c)\geq0$. Let
$$ \{ S_{\alpha,\beta}F_{c}\}=\{F_{c,s_1},...,F_{c,s_{p(c)}}\},$$ where $F_{c,s}=\{y\in \partial(2P)|~ l_{c,s}(y)=\lambda_{c,s}-u_{c,s}^i y_i=0\}.$
Set
$$\hat M_1={\frac12}\sum_c\left({\frac{\sum_{q=1}^{p(c)}u_{c,s_q}^iu_{c,s_q}^j}{\max_{s}\left\{\left(l_{c,s}\right)(y)\right\}}}\right),~\hat M_2={\frac12}\sum_c\left({\frac{\sum_{q=1}^{p(c)}u_{c,s_q}^iu_{c,s_q}^j}{\min_{s}\left\{\left(l_{c,s}\right)(y)\right\}}}\right).$$
Then we  rewrite $u_0$  as
$$u_0={\frac12}\sum_c\sum_{q=1}^{p(c)}{\frac{u_{c,s_q}^iu_{c,s_q}^j}{l_{c,s_q}(y)}}.$$
Thus it is easy to see
$$0<\hat M_1\leq(u_{0,ij})\leq M_2\leq\hat M_2.$$

Note that   $ \{S_{\alpha,\beta}F_{c}\}$ is invariant  under the reflection associated to $W_\beta$.  Then as in the proof of Lemma \ref{u^-1}, we relabel $ \{ S_{\alpha,\beta}F_{c}\}$: faces $ F_{a}\in \mathfrak F_\beta$; faces $ F_{a,\beta}$ with $ F_{a}\in \mathfrak F_\beta$; and Faces $ F_{b}$ %with  $ F_{b}\cap W_\beta\neq\emptyset$,
which is orthogonal to $W_\beta$. Thus similar to (\ref{eigenvalue}), we get
$$\sum_{q=1}^{p(c)}u_{c,s_q}^iu_{c,s_q}^j\beta_j=\lambda_c\beta_i,$$
where  $\lambda_c\geq0$  is a constant with  at least one $\lambda_c>0$ since $\hat M_1>0$.
As a consequence,
$$\hat M_1\beta={\frac12}\sum_c\frac{\lambda_c}{\max_{s}\left\{l_{c,s}(y)\right\}}\beta $$
and
$$\hat M_2\beta={\frac12}\sum_c\frac{\lambda_c}{\min_{s}\left\{l_{c,s}(y)\right\}}\beta.$$
This means that  $\beta$ is an eigenvector of both $\hat M_1$ and $\hat M_2$.

On the other hand, there are  constants $\mu_1^c,\,\mu_2^c$ such that
$$l_{c,s}(y)=l_c(y)+\mu_1^c \alpha(y)+\mu_2^c \beta(y).$$
In particular,
$$ \max_{s}\{l_{c,s}(y)\}=\min_{s}\{l_{c,s}(y)\}
 +\tilde\mu_1^c \alpha(y)+  \tilde\mu_2^c\beta(y).$$
Then as in the estimate (\ref{large-matrix}) (also see (\ref{large-matrix-2}),  (\ref{large-matrix-3})),  we get
 $$\beta^T\left(\hat M_1^{-1}-\hat M_2^{-1}\right)\beta=O(\alpha(y)+\beta(y)).$$
 It follows
$$\beta^T\left((u^{ij}_0)-M_2^{-1}\right)\beta=O(\alpha(y)+\beta(y)).$$
The lemma is proved.

\end{proof}

\begin{rem}\label{remark-w-beta} Since   (\ref{D2 of Guillemin func}) can be rewritten as
$${u_0}_{,ij}={\frac 1 2}\sum_{a}\left({\frac{u_{a}^iu_{a}^j}{l_{a}(y)}}+{\frac{u_{a,\beta}^iu_{a,\beta}^j}{l_{a,\beta}(y)}}\right)+{\frac 1 2}\sum_b\left({\frac{u_b^iu_b^j}{l_b(y)}}\right)$$
for any Weyl wall $W_\beta$,  as in the proof of Lemma \ref{u^-1} for the second case, one can prove:
if  $y_0\notin  W_\beta$,  then
\begin{align}\label{y-not}
{u}^{ij}_0\beta_i\beta_j=O(1),~{\rm as}~ y\to y_0.
\end{align}
Similar to (\ref{m2-beta}),
\begin{align}\label{not-in-2}\beta^T\left((u^{ij}_0)-M_2^{-1}\right)\beta= O(1),~{\rm if}~y_0\in W_\alpha\setminus  W_\beta.
\end{align}

\end{rem}

 From (\ref{59}),   a  direct computation shows
\begin{align}\label{Q ex}
Q&=\sum_{\alpha\in\Phi_+}\left[4{\frac{|\alpha|^2\coth \langle\alpha,\nabla u_0\rangle}{\alpha(y)}}-2{\frac{u_{0,ij}\alpha_i\alpha_j}{\sinh^{2}\langle\alpha,\nabla u_0\rangle}}-2u_0^{ij}{\frac{\alpha_i\alpha_j}{(\alpha(y))^2}}\right]\nonumber\\
&+2\sum_{\alpha\neq \beta\in\Phi_+}\left[{\coth{\langle\alpha,\nabla u_0\rangle}}{\frac{\langle \alpha,\beta\rangle}{\beta(y)}}+{\coth{\langle\beta,\nabla u_0\rangle}}{\frac{\langle \alpha,\beta\rangle}{\alpha(y)}}-2u_0^{ij}{\frac{\alpha_i\beta_j}{\alpha(y)\beta(y)}}\right].
\end{align}
For simplicity, we denote each term  in these  two sums  by  $ I_\alpha(y)$  and $I_{\alpha,\beta}(y)$, respectively.
We need to estimate  them in the following key lemma.

\begin{lem}\label{521}Let  $y_0\in W_{\alpha}$.   Then there exist $C_{\alpha,y_0}, C_{\alpha,\beta,y_0}>0$, such that
\begin{equation}\label{Ialpha}
|I_\alpha(y)|\leq {\frac{C_{\alpha,y_0}}{\alpha(y)}}
\end{equation}
and
\begin{equation}\label{Ialphabeta}
|I_{\alpha,\beta}(y)|\leq C_{\alpha,\beta,y_0}\left({\frac{1}{\alpha(y)}}+{\frac{1}{\beta(y)}}\right)
\end{equation}
as  $y\to y_0$.
\end{lem}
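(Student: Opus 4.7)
The plan is to establish both (\ref{Ialpha}) and (\ref{Ialphabeta}) by a careful Taylor expansion of the singular ingredients in formula (\ref{Q ex}) around $y_0\in W_\alpha$, using the explicit structure of Guillemin's function from (\ref{D of Guillemin func}) and (\ref{D2 of Guillemin func}). The key point is that the ostensibly most singular contributions (of order $\alpha(y)^{-2}$ for $I_\alpha$, and of order $(\alpha(y)\beta(y))^{-1}$ for $I_{\alpha,\beta}$) cancel, because $\alpha$ and $\beta$ turn out to be (near-)eigenvectors of the comparison matrices $M_2$ of Lemma \ref{u^-1} and Lemma \ref{M_2}. As in Lemma \ref{u^-1}, I split into Case~1 where $\alpha(y)/l_a(y)\to 0$ for every $F_a\in\mathfrak F_\alpha$, and Case~2 where $\alpha(y)/l_{a_0}(y)\geq\epsilon_0>0$ for some $a_0$.

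For (\ref{Ialpha}) in Case~2, (\ref{D of Guillemin func}) shows directly that $\langle\alpha,\nabla u_0\rangle$ stays between two positive constants, so $\coth\langle\alpha,\nabla u_0\rangle$ and $\sinh^{-2}\langle\alpha,\nabla u_0\rangle$ are bounded, and since $u_0^{ij}\alpha_i\alpha_j=O(\alpha(y))$ by Lemma \ref{u^-1}, each summand of $I_\alpha$ is at worst $O(1/\alpha(y))$. In Case~1 I Taylor expand (\ref{D of Guillemin func}) using $\log(1+z)=z+O(z^2)$ to get $\langle\alpha,\nabla u_0\rangle=\alpha(y)S_\alpha/|\alpha|^2+O(\alpha(y)^2)$ with $S_\alpha:=\sum_a (\alpha(u_a))^2/l_a$, whence
\[
\coth\langle\alpha,\nabla u_0\rangle=\frac{|\alpha|^2}{\alpha(y)S_\alpha}+O(1),\qquad \frac{1}{\sinh^2\langle\alpha,\nabla u_0\rangle}=\frac{|\alpha|^4}{\alpha(y)^2 S_\alpha^2}+O\!\Bigl(\tfrac{1}{\alpha(y)}\Bigr).
\]
Using (\ref{D2 of Guillemin func}) together with $\alpha(u_{a,\alpha})=-\alpha(u_a)$ and $\alpha(u_b)=0$ gives $u_{0,ij}\alpha_i\alpha_j=S_\alpha+O(\alpha(y))$, while Lemma \ref{u^-1} gives $u_0^{ij}\alpha_i\alpha_j=|\alpha|^4/S_\alpha+O(\alpha(y))$. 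Substituted into the three summands of $I_\alpha$ these produce leading terms $4|\alpha|^4/(\alpha(y)^2 S_\alpha)$, $-2|\alpha|^4/(\alpha(y)^2 S_\alpha)$, and $-2|\alpha|^4/(\alpha(y)^2 S_\alpha)$ respectively, whose sum vanishes, leaving only an $O(1/\alpha(y))$ remainder.

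For (\ref{Ialphabeta}) I first consider the sub-case $y_0\notin W_\beta$: then $\beta(y)$ is bounded below, (\ref{D of Guillemin func}) applied to $\beta$ shows $\langle\beta,\nabla u_0(y_0)\rangle>0$ so $\coth\langle\beta,\nabla u_0\rangle=O(1)$, and Remark \ref{remark-w-beta} yields $u_0^{ij}\beta_i\beta_j=O(1)$. A Cauchy--Schwarz estimate $(u_0^{ij}\alpha_i\beta_j)^2\leq (u_0^{ij}\alpha_i\alpha_j)(u_0^{ij}\beta_i\beta_j)$ together with Lemma \ref{u^-1} then gives $|I_{\alpha,\beta}|\leq C/\alpha(y)$. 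In the harder sub-case $y_0\in W_\alpha\cap W_\beta$, both $\alpha(y),\beta(y)\to 0$; after the same $\coth$ expansion applied to $\alpha$ and $\beta$, the most singular part of $I_{\alpha,\beta}/2$ takes the shape
\[
\frac{(|\alpha|^2/S_\alpha+|\beta|^2/S_\beta)\langle\alpha,\beta\rangle-2u_0^{ij}\alpha_i\beta_j}{\alpha(y)\beta(y)}+O\!\Bigl(\tfrac{1}{\alpha(y)}+\tfrac{1}{\beta(y)}\Bigr).
\]
Since $u_{0,ij}\leq M_2$ gives $u_0^{ij}-M_2^{-1}\geq 0$ in the Loewner order, I can apply Cauchy--Schwarz to the positive semidefinite difference $u_0^{ij}-M_2^{-1}$; combined with the diagonal estimates of Lemma \ref{M_2} this yields $u_0^{ij}\alpha_i\beta_j=\alpha^T M_2^{-1}\beta+O(\alpha(y)+\beta(y))$, and using $M_2\alpha=(S_\alpha/|\alpha|^2)\alpha$ gives $\alpha^T M_2^{-1}\beta=(|\alpha|^2/S_\alpha)\langle\alpha,\beta\rangle$. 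The symmetric argument with the $\beta$-symmetrised comparison matrix $M_2'$ gives $u_0^{ij}\alpha_i\beta_j=(|\beta|^2/S_\beta)\langle\alpha,\beta\rangle+O(\alpha(y)+\beta(y))$. Averaging the two estimates makes the numerator in the displayed expression $O(\alpha(y)+\beta(y))$, producing exactly the claimed bound $C(1/\alpha(y)+1/\beta(y))$.

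The main obstacle I expect is this last step: Lemma \ref{M_2} is stated only for the diagonal forms $\alpha^T(u_0^{ij}-M_2^{-1})\alpha$ and $\beta^T(u_0^{ij}-M_2^{-1})\beta$, so upgrading to sharp control of the off-diagonal form $u_0^{ij}\alpha_i\beta_j$ via Loewner-positivity and Cauchy--Schwarz, and then balancing the two independent symmetrisations $M_2$ (based on $\alpha$) and $M_2'$ (based on $\beta$) so that the leading $1/(\alpha(y)\beta(y))$ divergences cancel, is the delicate algebraic heart of the argument.
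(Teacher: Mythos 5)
Your Case~1 for $I_\alpha$ and your whole treatment of $I_{\alpha,\beta}$ reproduce the paper's argument faithfully: the cancellation of the three $\alpha(y)^{-2}$ leading terms through $S_\alpha=\sum_a(\alpha(u_a))^2/l_a(y)$, and the off-diagonal control of $u_0^{ij}\alpha_i\beta_j$ obtained by Cauchy--Schwarz on the positive semidefinite matrix $(u_0^{ij})-M_2^{-1}$ combined with the diagonal bounds of Lemma \ref{M_2} and the two symmetrizations, are exactly what the paper does. The gap is in your Case~2 for $I_\alpha$. The hypothesis $\alpha(y)/l_{a_0}(y)\ge\epsilon_0$ only bounds $\langle\alpha,\nabla u_0\rangle$ from \emph{below}; by \eqref{D of Guillemin func} it tends to $+\infty$ whenever $l_{a_0}(y)\to 0$ faster than $\alpha(y)$ (for instance $y\to y_0\in W_\alpha\cap F_{a_0}$ with $F_{a_0}$ not orthogonal to $W_\alpha$, approached so that $l_{a_0}(y)\sim\alpha(y)^2$), so your claim that $\langle\alpha,\nabla u_0\rangle$ ``stays between two positive constants'' is false there. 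Boundedness of $\sinh^{-2}\langle\alpha,\nabla u_0\rangle$ still holds but is useless for the middle summand of \eqref{Q ex}: by \eqref{D2 of Guillemin func}, $u_{0,ij}\alpha_i\alpha_j$ blows up like $1/l_{a_0}(y)$, which can be far larger than $1/\alpha(y)$, so ``each summand is at worst $O(1/\alpha(y))$'' does not follow from what you wrote.

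The paper closes this by splitting your Case~2 into its Cases (ii) and (iii). In Case (iii), where $\alpha(y)/l_{a_0}(y)\ge N_0\gg1$, it uses the quantitative lower bound $\sinh^2\langle\alpha,\nabla u_0\rangle\ge\bigl(1+2\alpha(u_{a_0})\alpha(y)/(|\alpha|^2l_{a_0}(y))\bigr)^{\alpha(u_{a_0})}$, which gives
\begin{equation*}
\frac{u_{0,ij}\alpha_i\alpha_j}{\sinh^{2}\langle\alpha,\nabla u_0\rangle}
=O\left(\frac{1}{\alpha(y)}\left(\frac{l_{a_0}(y)}{\alpha(y)}\right)^{\alpha(u_{a_0})-1}\right),
\end{equation*}
and this is $O(1/\alpha(y))$ precisely because $\alpha(u_{a_0})\in\mathbb Z_{>0}$, hence $\ge 1$ --- the lattice integrality of $\alpha\in\mathfrak M$ paired against the primitive normals $u_a\in\mathfrak N$, which the paper records for exactly this purpose when relabelling the faces before the lemma, and which your proposal never invokes. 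The exponent is sharp when $\alpha(u_{a_0})=1$ (the quotient is then genuinely of order $1/\alpha(y)$), so there is no soft way around this step: you must add the Case (iii) analysis and the integrality input to make your argument complete.
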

\begin{proof}

We  consider  the following three cases as $y\to y_0$:
\begin{itemize}
\item [(i)] ${\frac{\alpha(y)}{l_{a}(y)}}\le  \epsilon_0<<1$, $\forall  F_a \in \mathfrak F_\alpha$.
\item [(ii)]There is an $F_{a_0}\in \mathfrak F_\alpha$  such that  $0<{\frac{1}{\tau}}<{\frac{\alpha(y)}{l_{a_0}(y)}}<\tau$ for some $0<\tau<+\infty$, and ${\frac{\alpha(y)}{l_{a}(y)}}\le \tau,~\forall  F_a$.
\item [(iii)]There is  an $F_{a_0}\in \mathfrak F_\alpha$ such that  ${\frac{\alpha(y)}{l_{a_0}(y)}}\ge N_0>>1$.
\end{itemize}

\textbf{Case (i)}.
In this case, $\langle\alpha,\nabla u_a\rangle\to 0$. By (\ref{D of Guillemin func}),  it is easy to check
$$\langle\alpha,\nabla u_0\rangle=\sum_{a}{\frac{(\alpha(u_{a}))^2\alpha(y)}{|\alpha|^2l_{a}(y)}}+O\left(\sum_{a}\left({\frac{(\alpha(u_{a}))^2\alpha(y)}{|\alpha|^2l_{a}(y)}}\right)^2\right).$$
Then
\begin{eqnarray*}
\coth\langle\alpha,\nabla u_0\rangle&=&{\frac{|\alpha|^2}{\alpha(y)}}\left(\sum_{a}{\frac{(\alpha(u_{a}))^2}{l_{a}(y)}}\right)^{-1}+O(1).\end{eqnarray*}
On the other hand, by (\ref{D2 of Guillemin func}), one can show
$$u_{0,ij}\alpha_i\alpha_j= \left(\sum_{a}{\frac{(\alpha(u_{a}))^2}{l_{a}(y)}}\right) + O\left(\sum_{a}{\frac{\alpha(y)}{l_{a}(y)l_{a,\alpha}(y)}}\right).$$
Then
\begin{eqnarray*}
 {\frac{ u_{0,ij}\alpha_i\alpha_j}{\sinh^{2}\langle\alpha,\nabla u_0\rangle}}
={\frac{|\alpha|^4}{(\alpha(y))^2}}\left(\sum_{a}{\frac{(\alpha(u_{a}))^2}{l_{a}(y)}}\right)^{-1}+O\left({\frac{1}{\alpha(y)}}\right).
\end{eqnarray*}
Thus
\begin{align}
&4{\frac{|\alpha|^2\coth \langle\alpha,\nabla u_0\rangle}{\alpha(y)}}-2{\frac{ u_{0,ij}\alpha_i\alpha_j}{\sinh^{2}\langle\alpha,\nabla u_0\rangle}}\notag\\
&=2{\frac{|\alpha|^4}{(\alpha(y))^2}}\left(\sum_{a}{\frac{(\alpha(u_{a}))^2}{l_{a}(y)}}\right)^{-1}+O\left({\frac{1}{\alpha(y)}}\right).\notag
\end{align}
Hence, by the above relation and  Lemma \ref{u^-1},  we  see that there exists a constant $C'>0$ such that
 $|I_\alpha(y)|\leq {\frac{C'}{\alpha(y)}}$.

\textbf{Case (ii)}.
In this case,  it is easy to see
 $$0<\coth\langle\alpha,\nabla u_0\rangle,\, \frac1{\sinh^2\langle\alpha,\nabla u_0\rangle}\leq C'_\tau, ~{\rm and}~
 u_{0,ij}\alpha_i\alpha_j= O\left(\frac{1}{\alpha(y)}\right).$$
Then by Lemma \ref{u^-1}, we have
$|I_\alpha(y)|\leq {\frac{C_{\tau}}{\alpha(y)}}$,
where  the  constant $C_{\tau}<\infty$ depends only on $\tau$.

\textbf{Case (iii)}.
In this case,   we may assume
 $$\frac{\alpha(y)}{l_{a}(y)}\le \frac{\alpha(y)}{l_{a_0}(y)}\ge N_0,~\forall a=1,..., d_1.$$
Then  by (\ref{D of Guillemin func}), we have
\begin{align}\label{gradient-order} \langle\alpha,\nabla u_0\rangle \ge {\frac 1 2}\alpha(u_{a_0})
\log\left(1+{\frac{2\alpha(u_{a_0})\alpha(y)}{|\alpha|^2l_{a_0}(y)}}\right).
\end{align}
It follows
\begin{align}\label{O-nabla}\coth \langle\alpha,\nabla u_0\rangle
=O(1)\end{align}
and
$$\sinh^{2}\langle\alpha,\nabla u_0\rangle\ge   \left(1+{\frac{2\alpha(u_{a_0})\alpha(y)}{|\alpha|^2l_{a_0}(y)}}\right)^{\alpha(u_{a_0}) }.$$
On the other hand,  by (\ref{D2 of Guillemin func}),  it is easy to see
$$ u_{0,ij}\alpha_i\alpha_j\le  2d_1  \left({\frac{(\alpha(u_{a_0})^2)}{l_{a_0}(y)}}\right) +C.$$
Thus
\begin{equation}\label{second-derivative}
{\frac{u_{0,ij}\alpha_i\alpha_j}{\sinh^{2}\langle\alpha,\nabla u_0\rangle}}
 =O\left({\frac 1{\alpha(y)}\left({\frac{l_{a_0}(y)}{\alpha(y)}}\right)^{\alpha(u_{a_0})-1}}\right)
\leq o\left({\frac 1{\alpha(y)}}\right).
\end{equation}
Here we used the fact that $\alpha(u_{a})\in\mathbb{Z}_{>0}$, hence $\geq1$.
Hence, combining (\ref{O-nabla}) and (\ref{second-derivative}) together with Lemma \ref{u^-1}, we get
$I_\alpha(y)=O\left(\frac1{\alpha(y)}\right)$. The proof of (\ref{Ialpha}) is completed.

Next,  we prove (\ref{Ialphabeta}). We may
assume $y_0\in W_\alpha\cap W_\beta$,  otherwise,   (\ref{Ialphabeta}) can be more easy to obtained (cf. Remark \ref{remark-w-beta}).
We note that $(u_0^{ij})-M_2^{-1}\geq0$ and $\alpha$ is an eigenvector of  $M_2$.  Then by  the above discussion  for $\coth\langle\alpha,\nabla u_0\rangle$  in cases (i)-(iii), we have
\begin{eqnarray*}
\coth\langle\alpha,\nabla u_0\rangle\cdot\langle\alpha,\beta\rangle-{\frac{\langle M_2^{-1}\alpha,\beta\rangle}{\alpha(y)}}=O(1).
\end{eqnarray*}
On the other hand, by   Lemma \ref{M_2},
\begin{eqnarray*}
\left|{\frac{\alpha^T((u_0^{ij})-M_2^{-1})\beta}{\alpha(y)}}\right|\leq{\frac{O(\alpha(y)+\beta(y))}{\alpha(y)}}.
\end{eqnarray*}
Thus
\begin{equation}
\begin{aligned}
&\coth{\langle\alpha,\nabla u_0\rangle}{\frac{\langle\alpha,\beta\rangle}{\beta(y)}}-u_0^{ij}{\frac{\alpha_i\beta_j}{\alpha(y)\beta(y)}}
=O\left({\frac{1}{\alpha(y)}}+{\frac{1}{\beta(y)}}\right),~y\to y_0.\nonumber
\end{aligned}
\end{equation}
Similarly, we have
\begin{equation}
\begin{aligned}
&\coth{\langle\beta,\nabla u_0\rangle}{\frac{\langle\alpha,\beta\rangle}{\alpha(y)}}-u_0^{ij}{\frac{\alpha_i\beta_j}{\alpha(y)\beta(y)}}
=O\left({\frac{1}{\alpha(y)}}+{\frac{1}{\beta(y)}}\right),~y\to y_0.\nonumber
\end{aligned}
\end{equation}
Combining these two relations, we see that (\ref{Ialphabeta}) is true.
\end{proof}

By Lemma \ref{52}  and  Lemma \ref{521}, we  begin to  prove  Proposition \ref{Q int}.

\begin{proof}[Proof of Proposition \ref{Q int}]
Set a  compact  subset of $\partial (2P_+)$  by
$$\Omega^*=\cup_{\alpha\in\Phi_+}\{y\in W_\alpha\cap F_{\tilde A}|~ W_\alpha ~{\rm intersects}~ F_{\tilde A} ~{\rm not~orthogonally}\}.$$
%Let $\Omega^*_+:=\Omega^*\cap \bar{\mathfrak a}_+$. 
Since $\emptyset\neq F_{\tilde A}\cap F_{\tilde A,\alpha}\subset W_\alpha$ if $ F_{\tilde A}\cap W_\alpha\neq\emptyset$, each point in $\Omega^*$ lies on  a face of codimension greater than 2.
We claim:
for any $y_0\in\Omega^*\cap\overline{(2P_+)}$,   there is a neighbourhood $V_{y_0}$ and a constant $C_{y_0}>0$ such that
\begin{equation}\label{522}
|Q|\leq C_{y_0}\left\langle\rho,{\frac{\nabla\pi}{\pi}}\right\rangle,\ \forall y\in V_{y_0}\cap(2P_+).
\end{equation}

By (\ref{dpi}), we see there exists a uniform $C$ such that
$$\left\langle {\frac{\nabla\pi}{2\pi}},\rho\right\rangle=\sum_{\alpha\in\Phi_+}{\frac{\langle\alpha,\rho\rangle}{\alpha(y)}}
\ge C\sum_{\alpha\in\Phi_+}{\frac{ 1}{\alpha(y)}}, $$
since  $\langle\alpha,\rho\rangle>0$ for each $\alpha$.  Thus  by Lemma \ref{521},  to prove the claim,  it suffices to estimate $I_\beta(y)$
with $y_0 \notin W_\beta$ and $I_{\beta,\gamma}(y)$ with $y_0 \notin W_\beta\cup W_\gamma$.  The later can be easily settled. In fact, $I_{\beta,\gamma}(y)$ is bounded near $y_0$.
  For $I_\beta(y)$, we observe that  $\langle\beta, \nabla u_0\rangle\ge C_0>0$ for any $y\in V_{y_0}$.  As in the proof of   Lemma \ref{521} for \textbf{Case (iii)},  we have
$$\coth \langle\beta,\nabla u_0\rangle
=O(1)$$
and
\begin{align}{\frac{u_{0,ij}\beta_i\beta_j}{\sinh^{2}\langle\beta,\nabla u_0\rangle}}=O\left({\frac 1{\beta(y)}\left({\frac{l_{a_0}(y)}{\beta(y)}}\right)^{\beta(u_{a_0})-1}}\right)\leq O(1).\nonumber
\end{align}
Hence,  together with (\ref{y-not}) in Remark \ref{remark-w-beta}, we get
\begin{align}\label{i-beta}
|I_\beta(y)|\le C, ~\forall y\in V_{y_0}\cap(2P_+).
\end{align}
The claim is proved.

By the above claim, we can pick a small neighbourhood $U$ of $\bar \Omega^*$ in $2P$ and a constant $C_U<+\infty$ independent of $u$, such that
\begin{equation}\label{603}
|Q|\leq C_U\left\langle\rho,{\frac{\nabla\pi}{\pi}}\right\rangle,\,\ \forall y\in U\cap(2P_+).
\end{equation}
Furthermore, we can take a $W$-invariant polytope $P'$ whose boundary intersects the Weyl walls orthogonally and $2P\backslash2P'\subset U$. This can be done as follows: for any $F_{\tilde A}\cap W_{\alpha_{(1)}}\cap...\cap W_{\alpha_{(k)}}\subset \Omega^*$, we chop off a sufficiently small corner of $2P$ with
$F_{\tilde A}\cap W_{\alpha_{(1)}}\cap...\cap W_{\alpha_{(k)}}$ being the top and with the base lying on an $(r-1)$-plane which is parallel to $F_{\tilde A}\cap W_{\alpha_{(1)}}\cap...\cap W_{\alpha_{(k)}}$ and orthogonal to $W_{\alpha_{(1)}},...,W_{\alpha_{(k)}}$. By suitable choice of the chopping off,  $2P'$ is $W$-invariant (See Figure 2).

\begin{figure}
\centering
(a)\includegraphics[height=1.5in]{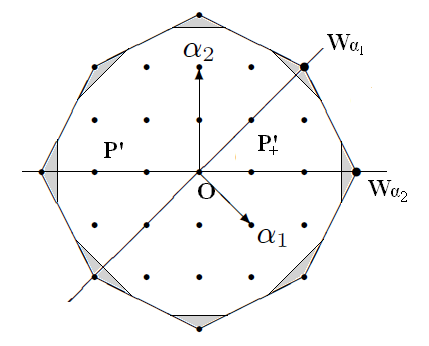}
(b)\includegraphics[height=1.5in]{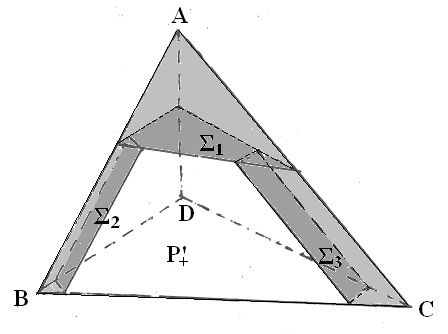}
\caption{The dark areas stand for $U$. In (a), $P$ is of dimension $2$ and we present out  the whole $P$ and $U$.   In (b), $P$ is of  dimension $3$.  We only present out  $P_+$.  $ADB,~ADC$ are two walls and $ABC,~BDC$ are outer faces. For simplicity, we assume  that $BDC$ is orthogonal to both walls, so that we need not to cut $P_+$ near $\overline{BD}$ and $\overline{CD}$.}
\end{figure}
Note that  $|Q\pi|$ is uniformly  bounded on $2P$.  Thus  by Lemma \ref{52} and (\ref{603}), we obtain
\begin{align}
\left|\int_{2P_+}Qu\pi\, dy\right|
&\leq\left(\int_U+\int_{{2P'_+}}\right)|Qu\pi|  dy\notag\\
&\leq C_U\int_{2P_+}\langle\rho,{\nabla\pi}\rangle u\, dy+\hat C\int_{{2P'_+}}u\pi dy.\notag
\end{align}
 Proposition  \ref{Q int} is proved.
\end{proof}

\begin{rem}\label{remark-Q}According to the proof in  Lemma \ref{521}, we  actually  prove that (\ref{522}) holds  for any $y_0\in \overline{2P_+}$. Then we can avoid to use Lemma \ref{52} and improve Proposition  \ref{Q int}  by the following  estimate
$$\left|\int_{2P_+}Qu\pi\, dy\right|
\leq C\int_{2P_+}\langle\rho,{\nabla\pi}\rangle u\, dy, ~\forall u\in\hat{\mathcal C}_{\infty,W}.$$

\end{rem}

\begin{proof}[Proof of Proposition \ref{proper-red}]
Let $f(t)=t-\log\sinh t,\,t>0$. Then
$$0>f(t)-f(\epsilon t)>\log \epsilon.$$
Regarding  $\mathcal N(\cdot)$ as $f(t)$,  and  then by Proposition \ref{non-linear part}, it follows
\begin{equation}\label{607}
\mathcal N(u)>\mathcal N(\epsilon u)-n\log \epsilon
\geq C_0-\epsilon\mathcal L_B(u)-n\log \epsilon,
\end{equation}
where
\begin{equation}\label{601}
\mathcal L_B(u)=-C_{\Lambda}\int_{\partial(2P_+)}u\langle y,\nu\rangle\pi\, d\sigma_0-C_L\mathcal L(u)+\int_{2P_+}Qu\pi\, dy.
\nonumber
\end{equation}
On the other hand, by Lemma \ref{norm-convex} and Propositions {\ref{5301}} and \ref{Q int}, for any $\delta\in(0,1)$, there exists uniform constants $C_1$, $C_2$, $C_3$ independent of $u$ such that
\begin{align}\label{605}
\mathcal L_B(u)-\mathcal L(u)&\leq C_1\int_{(2P_+)}u\pi \,dy+C_2\int_{\partial(2P_+)}u\langle y,\nu\rangle\pi \,d\sigma_0+C_3\mathcal L(u)\notag\\
&\leq\left(C_1\Lambda+C_2\right)\int_{\partial{(2P_+)}}u\langle y,\nu\rangle\pi \,d\sigma_0+C_3\mathcal L(u)\notag\\
&\leq\left(C_1\Lambda+C_2+\delta\Lambda\right)\int_{\partial{(2P_+)}}u\langle y,\nu\rangle\pi \,d\sigma_0+C_3\mathcal L(u)-\delta\int_{2P_+}u\pi \,dy\notag\\
&\leq\left(C_3+{\frac{C_1\Lambda+C_2+\delta\Lambda}{\lambda}}\right)\mathcal L(u)-\delta\int_{2P_+}u\pi \,dy.
\end{align}
Thus by choosing  $\epsilon=\left[1+\left(C_3+{\frac{C_1\Lambda+C_2+\delta\Lambda}{\lambda}}\right)\right]^{-1}$,  we get
$$\mathcal L_B(\epsilon u)<\mathcal L(u)-\epsilon\delta\int_{2P_+}u\pi\, dy.$$
By (\ref{607}),  we derive
$$\mathcal K(u)\geq \epsilon\delta\int_{2P_+}u\pi\, dy-C_{\delta},$$
where $C_{\delta}$ is  independent of $u$. (\ref{608}) is proved by replacing $\epsilon\delta$ with $\delta$.
\end{proof}

\subsection{Proof of Theorem \ref{LZZ1}}

Recall that the $J$-functional is given by
\begin{equation}\label{+3102}
\begin{aligned}
 J_{\omega_0}(\phi)&={\frac 1 {V_M}}\int_0^1\int_{M}\dot{\phi}_t(\omega_0^n-\omega_{\phi_t}^n)\wedge dt,\nonumber
\end{aligned}
\end{equation}
where $\phi\in\mathcal H_{K\times K}(\omega_0)$ and $\phi_t$ is a path in $\mathcal H_{K\times K}(\omega_0)$ joining $0$ and $\phi$.
The following definition can be found in  \cite{Tian, ZZ, DR}, etc.

\begin{defi}\label{properdef}
$\mu_{\omega_0}(\phi)$ is called proper modulo a subgroup $G_0$ of $\text{Aut}(M)$ in K\"ahler class $[\omega_0]$  if there is a continuous function $p(t)$ on $\mathbb R$ with the property $\displaystyle\lim_{t\to+\infty} p(t)=+\infty$, such that
\begin{equation}
\mu_{\omega_0}(\phi)\ge \inf_{\sigma\in G_0} p( J_{\omega_0}(\phi_{\sigma})),\nonumber
\end{equation}
where
$\phi_{\sigma}$ is defined by
$\omega_0+\sqrt{-1}\partial\bar{\partial}\phi_{\sigma}
=\sigma^*(\omega_0+\sqrt{-1}\partial\bar{\partial}\phi).$
\end{defi}

For our purpose, we focus on $\phi\in \mathcal H_{K\times K}(\omega_0)$ and $ G_0=Z(G)$. Let $u$ be the Legendre function of $\psi_0+\phi$.
Take a $v\in\eta_c(M)$ such that Re$(v)=-\nabla u(O)$. Let $\sigma_t^v$ be a one parameter group generated by Re$(v)$. Then $(\sigma_t^v)\in Z(G)$. It follows
$$(\sigma^v_1)^*\omega_\phi=\omega_0+ \sqrt{-1}\partial\bar\partial \tilde\phi$$
induces a $K\times K$-invariant K\"ahler potential $\tilde\phi$.
Thus  the Legendre function  $\tilde u$  of $\psi_0+\tilde \phi$ satisfies  $\nabla\tilde u(O)=0$. Moreover,  $\nabla(\psi_0+\tilde{\phi})(O)=0$. Since we may also normalize $\psi_0+\tilde\phi$ so that $(\psi_0+\tilde{\phi})(O)=0$, thus $\tilde u(O)=0$.
Moreover, $\mathcal K(\tilde u)=\mathcal K(u)$ since $\mathcal L(\tilde u)=\mathcal L(u)$ by Lemma \ref{5205} and the vanishing of Futaki invariant.

The following lemma is an analogue to \cite[Lemma 2.2]{ZZ}.
\begin{lem}\label{609}
There exists a uniform $C_J>0$ such that
\begin{equation}\label{bound psi}
\left | J_{\omega_0}(\tilde\phi)-{\frac 1 V}\int_{2P_+}\tilde u\pi\, dy\right|\leq C_J, \ \forall \phi\in \mathcal H_{K\times K}(\omega_0),\nonumber
\end{equation}
where $\tilde u\in\hat{\mathcal C}_{\infty,W}$ and $\psi_0+\tilde{\phi}$ is the Legendre function of $\tilde u$.
\end{lem}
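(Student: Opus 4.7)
The plan is to run exactly the same Legendre reduction as in Proposition \ref{5104}, but now applied to the $J$-functional instead of the K-energy, and then to control the resulting remainder by a uniform constant. Starting from
\begin{equation*}
J_{\omega_0}(\tilde\phi)=\tfrac{1}{V_M}\int_M\tilde\phi\,\omega_0^n-\tfrac{1}{(n+1)V_M}\sum_{k=0}^n\int_M\tilde\phi\,\omega_0^{n-k}\wedge\omega_{\tilde\phi}^k,
\end{equation*}
one applies the $KAK$-integral formula (Proposition \ref{KAK int}) and the Monge-Amp\`ere identity (\ref{MA}) to each mixed term on the right-hand side, then changes variables via the gradient maps $y=\nabla\psi_0(x)$ and $y=\nabla\psi(x)$ (where $\psi=\psi_0+\tilde\phi$). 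Using Legendre duality $\psi_*(\nabla u_*(y))+u_*(y)=\langle\nabla u_*(y),y\rangle$ at each stage, each term becomes an integral over $2P_+$ with the familiar weight $\pi(y)$.

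After regrouping, the principal contribution is exactly $\frac{1}{V}\int_{2P_+}\tilde u\,\pi\,dy$, and the remainder is a sum of terms involving only $u_0$, $\nabla u_0$, $\psi_0$, and the smooth difference $\tilde u-u_0\in C^\infty(\overline{2P})$. The normalization $\tilde u(O)=\nabla\tilde u(O)=0$ plays two roles: by convexity it forces $\tilde u\ge0$ on $\overline{2P}$, making the leading integral nonnegative, and it fixes the affine ambiguity so that the Legendre-gap quantities $\psi(\nabla u_0)+\tilde u-\langle\nabla u_0,y\rangle$ and $\psi_0(\nabla\tilde u)+u_0-\langle\nabla\tilde u,y\rangle$, which are nonnegative by Young's inequality, do not carry spurious linear parts that could grow with $\tilde\phi$.

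The main technical hurdle is controlling this remainder uniformly in $\tilde\phi$, since $\psi_0$ blows up as $x\to\infty$ in $\mathfrak a_+$ (equivalently as $y\to\partial(2P)$) and $\nabla u_0$ has logarithmic singularities on the outer faces. I would handle these by integration by parts along the path $\psi_s=\psi_0+s\tilde\phi$ (parametrised by $s\in[0,1]$) and by using Lemma \ref{bound-measure} to compute the limiting boundary fluxes $u_0^{ij}\nu_i\to 0$ and $u^{ij}_{0,j}\nu_i\to\tfrac{2}{\lambda_A}\langle y,\nu_A\rangle$ on each outer face $F_A$; the remaining singular contributions near the Weyl walls are absorbed by the quadratic vanishing of $\pi$, exactly as in the proof of Proposition \ref{5104} and Lemma \ref{52}. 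The resulting bound on the remainder depends only on the polytope $2P_+$ and on the fixed Guillemin data $(\psi_0,u_0)$, giving the desired $C_J$ independent of $\tilde\phi$.
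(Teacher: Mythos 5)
Your starting identity for $J_{\omega_0}$ is correct, but the next step breaks down: the Monge--Amp\`ere identity \eqref{MA} computes only the full complex Monge--Amp\`ere measure of a single $K\times K$-invariant potential, so the mixed terms $\int_M\tilde\phi\,\omega_0^{n-k}\wedge\omega_{\tilde\phi}^k$ with $0<k<n$ do \emph{not} push forward to integrals against $\pi(y)\,dy$ under either gradient map; they involve mixed discriminants of $\mathrm{Hess}_{\mathbb R}(\psi_0)$ and $\mathrm{Hess}_{\mathbb R}(\psi)$ and mixed products $\langle\alpha,\nabla\psi_0\rangle\langle\alpha,\nabla\psi\rangle$, for which no clean Legendre reduction is available. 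The paper avoids this entirely by keeping the path-integral form of $J_{\omega_0}$: since $\dot\phi_t=-\dot u_t$ and at each time $t$ one has the \emph{full} Monge--Amp\`ere of $\psi_t$, Proposition \ref{KAK int} and \eqref{MA} give in one line
$$J_{\omega_0}(\tilde\phi)=\frac1{V_M}\int_M\tilde\phi\,\omega_0^n+\frac1V\int_{2P_+}(\tilde u-u_0)\pi\,dy,$$
and your route is repairable only by reverting to this form.

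The second, more serious, gap is that after this reduction the entire content of the lemma is the uniform two-sided bound on $\frac1{V_M}\int_M\tilde\phi\,\omega_0^n$, and you assert it rather than prove it: your ``remainder'' still contains $\tilde u-u_0$ paired against a fixed measure, which a priori grows with $\tilde\phi$, so it does not ``depend only on the polytope and the Guillemin data''. The nonnegativity of your two Legendre gaps by itself only yields $J_{\omega_0}(\tilde\phi)\ge 0$. What is actually needed (and what \cite{ZZ}, Lemma 2.2, supplies in the toric case) is: (i) the pointwise upper bound $\tilde\phi(\exp x)=\psi(x)-\psi_0(x)\le u_0(y^*)-\tilde u(y^*)\le\sup_{\overline{2P}}u_0$ with $y^*=\nabla\psi(x)$, using $\tilde u\ge0$ from \eqref{normalization-u}; and (ii) a lower bound, e.g.\ $\psi\ge0$ (again from $\tilde u\ge 0$ and $\tilde u(O)=0$) gives $\tilde\phi\circ\exp\ge-\psi_0$, and $\psi_0(\nabla u_0(y))=\langle\nabla u_0(y),y\rangle-u_0(y)$ has only logarithmic singularities on $\partial(2P)$ and is therefore $\pi\,dy$-integrable. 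Without step (ii) (or the alternative Green's-function bound $\sup_M\tilde\phi\le\frac1{V_M}\int_M\tilde\phi\,\omega_0^n+C$), the claimed constant $C_J$ is not established. The machinery you invoke --- integration by parts along the path and the boundary fluxes of Lemma \ref{bound-measure} --- belongs to the reduction of the K-energy in Proposition \ref{5104} and is not what this lemma requires.
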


\begin{proof}
In fact, Lemma \ref{609} comes from the following new version of $\mathcal J_{\omega_0}(\phi)$,
\begin{eqnarray*}
 J_{\omega_0}(\phi)
&=&{\frac{1}{V_M}}\int_M\phi\,\omega_0^n-{\frac{1}{V_M}}\int_0^1\int_M\dot{\phi}_t\,\omega_{\phi_t}^n\wedge dt\\
&=&{\frac{1}{V_M}}\int_M\phi\,\omega_0^n-{\frac{1}{V}}\int_0^1\int_{\mathfrak a_+}\dot{\phi_t}MA_{\mathbb R}(\psi_t)\prod_{\alpha\in\Phi_+}\langle\alpha,\nabla\psi_t\rangle^2\,dx\wedge dt\\
&=&{\frac{1}{V_M}}\int_M\phi\,\omega_0^n+{\frac{1}{V}}\int_{2P_+}(u-u_0)\pi\,dy.
\end{eqnarray*}
Then the lemma can be proved similarly as Lemma 2.2 in  \cite{ZZ}.
\end{proof}

\begin{proof}[Proof of Theorem \ref{LZZ1}]
For any $\phi\in \mathcal H_{K\times K}(\omega_0)$, there exists $\sigma\in Z(G)$ such that
$$\sigma^*\omega_\phi=\omega_0+ \sqrt{-1}\partial\bar\partial \tilde\phi$$
as above.
Applying Proposition \ref{proper-red}, we have
$$\mathcal K(\tilde u)\geq  \delta\int_{2P_+}\tilde u\pi dy-C_{\delta}.$$
Thus  by  Proposition \ref{5104} and Lemma \ref{609}, we get
$$\mu_{\omega_0}(\phi)=\mu_{\omega_0}(\phi)=\frac{1}{V}\mathcal K(\tilde u)  \geq \delta\cdot   J_{\omega_0}(\tilde\phi)-C_J-{\frac{C_{\delta}}{V}}.$$
The theorem is proved.
\end{proof}

\section{K\"ahler-Ricci solitons and the Modified K-energy}
In this section, we   verify the properness of modified K-energy on $(M, K_M^{-1})$  under an analogous condition of (\ref{bar}).
By Hodge theorem, for any $v\in \eta(M)$, there exists  a unique smooth complex-valued  function
$\theta_v(\omega_\phi)$ of $M$ such that
\begin{equation}\label{potential-vec}
i_v\omega_\phi=\sqrt{-1}\bar\partial\theta_v(\omega_\phi), \ \int_Me^{\theta_v(\omega_\phi)}\omega_\phi^n=\int_M\omega_\phi^n.\nonumber
\end{equation}
If $\phi\in\mathcal H_{K\times K}(\omega_0)$ and $v\in\eta_c(M)$, $\theta_v(\omega_\phi)$ is $K\times K$-invariant, so it can be written as
\begin{equation}\label{+5201}
\theta_{X}(\omega_{\phi})=c^i{\frac{\partial\psi}{\partial x^i}}+c,~\forall x\in\mathfrak a,\nonumber
\end{equation}
where $c^i$ and $c$ are constants with $c^i\alpha_i=0$ for any $\alpha\in\Phi_+$. Since the soliton vector field $X\in\eta_c(M)$ and Im$(X)\in\mathfrak k_r$, we have $c^i,\,c\in\mathbb R$. Furthermore, by the vanishing of the modified Futaki invariant \cite{TZ02}, they can be uniquely determined by the following linear equations,
\begin{eqnarray}
\int_{2P_+}(c^iy_i+c)\pi\, dy&=&0,\nonumber\\
\left\langle v,\int_{2P_+}ye^{c^iy_i+c}\pi\, dy\right\rangle&=&0,\ \ \ \forall v\in\mathfrak a^*_{t}.\label{coefficients of X 2}
\end{eqnarray}

The modified K-energy $\mu_{\omega_0}^X(\cdot)$ associated to $X$ is defined by
\begin{equation}\label{4101}
\mu_{\omega_0}^X(\phi)={\frac 1 {V_M}}\int_{M}\log{\left({\frac{\omega_{\phi}^n}{\omega_0^n}}e^{\phi-h_0}\right)}e^{\theta_X(\omega_{\phi})}\omega_{\phi}^n-{\frac 1 {V_M}}\int_0^1\int_{M}\dot{\phi_t}e^{\theta_X(\omega_{\phi_t})}\omega_{\phi_t}^n\wedge
dt,\nonumber
\end{equation}
where $\phi\in\mathcal H_X(\omega_0)$ and $\phi_t$ is a path in $\mathcal H_X(\omega_0)$ joining 0 and $\phi$ \cite{TZ02}.
The modified J-functional is defined by
\begin{equation}\label{AubinMJ}
 J_{\omega_0}^X(\phi)={\frac 1 {V_M}}\int_0^1\int_M\dot{\phi}_t\left(e^{\theta_X(\omega_0)}\omega_0^n-e^{\theta_X(\omega_{\phi_t})}\omega_{\phi_t}^n\right)\wedge dt.\nonumber
\end{equation}
The properness of $\mu_{\omega_0}^X(\cdot)$ can be defined analogous to Definition \ref{properdef} \cite{CTZ}.

The following is the main result of this section.

\begin{theo}\label{thm40}
Let $M$ be a Fano compactification of $G$ and $X$  the soliton vector field as above.
Let $$bar_X:={\frac{\int_{2P_+}y e^{\theta_X(y)}\pi\,dy}{\int_{2P_+} e^{\theta_X(y)}\pi\,dy}},$$
where $\theta_X(y)=c^iy_i+c$. Suppose that the corresponding polytope $2P_+$ satisfies
\begin{equation}\label{bar4}
bar_X\in4\rho+\Xi.
\end{equation}
Then $\mu_{\omega_0}^X(\cdot)$ is proper on $\mathcal H_{K\times K}(\omega_0)$ modulo  $Z(G)$.
\end{theo}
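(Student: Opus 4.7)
My plan is to parallel the strategy of Section 4 with $\pi\,dy$ replaced throughout by the weighted measure $e^{\theta_X(y)}\pi\,dy$, where by \eqref{+5201} the soliton potential reduces to the affine function $\theta_X(y)=c^iy_i+c$ in moment coordinates. First I would carry out the Legendre reduction of $\mu^X_{\omega_0}$, repeating the integration by parts of Proposition \ref{5104} but with $\omega_\phi^n$ replaced by $e^{\theta_X(\omega_\phi)}\omega_\phi^n$. This gives $\mu^X_{\omega_0}(\phi)=\frac{1}{V^X}\mathcal K^X(u)+\text{const}$ with $V^X:=\int_{2P_+}e^{\theta_X}\pi\,dy$, and in the Fano case (so $\Lambda_A=1$ by the discussion after Corollary \ref{+5205}) the reduced energy splits as $\mathcal K^X=\mathcal L^X+\mathcal N^X$ with
$$\mathcal L^X(u)=\int_{2P_+}\langle y-4\rho,\nabla u\rangle\,e^{\theta_X}\pi\,dy,\qquad \mathcal N^X(u)=\int_{2P_+}\bigl[-\log\det(u_{,ij})+\chi(\nabla u)+4\langle\rho,\nabla u\rangle\bigr]\,e^{\theta_X}\pi\,dy.$$
The boundary terms arising from differentiating the weighted density $e^{\theta_X}\pi$ in place of $\pi$ are controlled because $e^{\theta_X}$ is smooth and strictly positive on $\overline{2P_+}$.

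The next step is the lower bound for $\mathcal L^X$. By the convexity of $u$,
$$\langle y-bar_X,\nabla u(y)\rangle\ge u(y)-u(bar_X)-\langle\nabla u(bar_X),y-bar_X\rangle,$$
and integration against $e^{\theta_X}\pi\,dy$ kills the cross term by the very definition of $bar_X$. The modified Futaki vanishing \eqref{coefficients of X 2} places $bar_X\in\mathfrak a_{ss}^*$, so that together with the hypothesis \eqref{bar4} the residual linear contribution $\int_{2P_+}\langle bar_X-4\rho,\nabla u\rangle e^{\theta_X}\pi\,dy$ is nonnegative on $\hat{\mathcal C}_{\infty,+}$. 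A compactness-contradiction argument as in Proposition \ref{5301} then yields $\lambda^X>0$ with
$$\mathcal L^X(u)\ge \lambda^X\int_{\partial(2P_+)}\langle y,\nu\rangle u\,e^{\theta_X}\pi\,d\sigma_0,\qquad\forall u\in\hat{\mathcal C}_{\infty,+}.$$
Otherwise a limit $u_\infty=\xi^iy_i\ge 0$ with $u_\infty(O)=0$ would satisfy $\mathcal L^X(u_\infty)=V^X\langle bar_X-4\rho,\xi\rangle=0$, forcing $\xi\in\mathfrak a_t$ since $bar_X-4\rho$ lies in the open positive Weyl chamber of $\mathfrak a_{ss}^*$, and then $u_\infty\equiv 0$ because $O$ is interior to $\mathfrak a_t^*\cap 2P$, contradicting the renormalization of the boundary mass.

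For the nonlinear part $\mathcal N^X$ I would follow Proposition \ref{non-linear part}: the convexity of $-\log\det$ and of $\chi$ at Guillemin's potential $u_0$ gives a lower bound in terms of a boundary integral, $\mathcal L^X$, and a singular interior term $Q^X$ defined by the obvious analog of \eqref{59} with the weighted density $e^{\theta_X}\pi$. Since $e^{\theta_X}$ is smooth and strictly positive, the asymptotic analysis at Weyl walls in Lemmas \ref{u^-1}--\ref{521} and the splitting argument of Proposition \ref{Q int} go through verbatim, yielding
$$\left|\int_{2P_+}Q^Xu\,e^{\theta_X}\pi\,dy\right|\le C_I\int_{2P_+}\langle\rho,\nabla\pi\rangle u\,e^{\theta_X}\,dy+C_{II}\int_{2P_+}u\,e^{\theta_X}\pi\,dy.$$
Combined with the rescaling trick $\mathcal N^X(u)>\mathcal N^X(\epsilon u)-n\log\epsilon$ from the proof of Proposition \ref{proper-red}, one obtains $\mathcal K^X(u)\ge\delta\int_{2P_+}u\,e^{\theta_X}\pi\,dy-C_\delta$ on $\hat{\mathcal C}_{\infty,+}$ for every $\delta\in(0,1)$.

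To conclude properness modulo $Z(G)$, normalize $\phi$ by a $Z(G)$-action so that $\tilde u\in\hat{\mathcal C}_{\infty,W}$ satisfies $\tilde u(O)=0$ and $\nabla\tilde u(O)=0$, and prove the analog of Lemma \ref{609} for $J^X_{\omega_0}$, namely $|J^X_{\omega_0}(\tilde\phi)-(V^X)^{-1}\int_{2P_+}\tilde u\,e^{\theta_X}\pi\,dy|\le C_J$, again by the one-parameter identity $\dot\phi_t=-\dot u_t$ and the $KAK$-integral formula \ref{KAK int}. The main obstacle, I expect, is the compactness step for $\mathcal L^X$ in the second paragraph: one must verify that the vanishing $\mathcal L^X(u_\infty)=0$ together with the modified Futaki condition really does force $\xi\in\mathfrak a_t$ and then $u_\infty\equiv 0$, using both the openness of $\Xi$ inside $\mathfrak a^*_{ss}$ and the orthogonal decomposition $\mathfrak a^*=\mathfrak a_t^*\oplus\mathfrak a_{ss}^*$ in a manner compatible with the toric-plus-semisimple bookkeeping used in Section 4.
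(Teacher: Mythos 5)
Your proposal follows essentially the same route as the paper's Section 5: the Legendre reduction to $\mathcal K^X=\mathcal L^X+\mathcal N^X$ (Proposition \ref{M-energy}), the lower bound $\mathcal L^X(u)\ge 0$ via convexity and the definition of $bar_X$ followed by the compactness-contradiction argument of Proposition \ref{5301} (Proposition \ref{Mlinear}), the convexity-at-$u_0$ bound for $\mathcal N^X$ with the same singular term $Q$ (the extra $c^i$-terms from differentiating $e^{\theta_X}$ being smooth and harmless, as in (\ref{4107})), the rescaling trick, and the modified $J$-functional comparison of Lemma \ref{4112}. The argument is correct and matches the paper's proof in all essential steps.
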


Since the properness of the modified K-energy implies the existence of K\"ahler-Ricci solitons \cite{TZ},  Theorem \ref{thm40} gives a proof for the existence of K\"ahler-Ricci solitons under the condition (\ref{bar4}).
As in the proof of Proposition \ref{necessary}, one can also show that  (\ref{bar4}) is a necessary condition by using the computation  as for toric manifolds  \cite{WZZ}.

\subsection{Reduction of Modified K-energy}
The following is a generalization of Proposition 3.1 in \cite{WZZ}.
\begin{prop}\label{M-energy}
Let $\phi\in\mathcal H_{K\times K}(\omega_0)$ and $u$ be the Legendre function of $\psi=\psi_0+\phi$.
Then
\begin{equation}\label{MK-energy}
\mu_{\omega_0}^X(\phi)=\frac{1}{V}\mathcal K^X(u)+const.,\nonumber
\end{equation}
where $\mathcal K^X(u)=\mathcal N^X(u)+\mathcal L^X(u)$, and
\begin{align}
\mathcal L^X(u)= \int_{2P_+}\langle y-4\rho,\nabla u\rangle e^{\theta_X(y)}\pi\,dy, \nonumber.
\end{align}
\begin{align}
\mathcal N^X(u)=-\int_{2P_+}\left(\log\det\left(u_{ij}\right)-\chi\left(\nabla u\right)-4\langle\rho,\nabla u\rangle\right)e^{\theta_X(y)}\pi\,dy. \label{MN}
\end{align}
\end{prop}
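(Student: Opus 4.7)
The plan is to mimic the proof of Proposition \ref{5104} with the polytope measure $\pi(y)\,dy$ replaced throughout by the weighted measure $e^{\theta_X(y)}\pi(y)\,dy$. The key preliminary observation is that $\theta_X(y) = c^i y_i + c$ is a \emph{fixed} affine, $W$-invariant function on $2P_+$, independent of the chosen $\phi$: indeed $c^i$ is determined by the Lie algebra structure (with $c^i\alpha_i = 0$ for every $\alpha \in \Phi_+$), while the normalization $\int_M e^{\theta_X(\omega_\phi)}\omega_\phi^n = V_M$ reduces, via Proposition \ref{KAK int} and (\ref{MA}), to $e^c \int_{2P_+} e^{c^i y_i}\pi\,dy = V$, which determines $c$ uniquely in terms of the polytope alone. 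Moreover, under the $KAK$-reduction $\theta_X(\omega_\phi)(\exp x) = c^i\,\partial\psi/\partial x^i(x) + c$ with $\psi = \psi_0 + \phi$, and under the Legendre transform $y = \nabla\psi(x)$ this collapses to the stated polytope expression.

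Next I would differentiate $\mu_{\omega_0}^X(\phi_t)$ along a smooth path $\{\phi_t\}\subset\mathcal H_{K\times K}(\omega_0)$ joining $0$ and $\phi$. The standard variational formula (see \cite{TZ02}) reads
\begin{equation*}
\frac{d}{dt}\mu_{\omega_0}^X(\phi_t) = -\frac{1}{V_M}\int_M \dot\phi_t\bigl(S(\omega_{\phi_t}) + \Delta_{\omega_{\phi_t}}\theta_X(\omega_{\phi_t}) - n\bigr)\,e^{\theta_X(\omega_{\phi_t})}\omega_{\phi_t}^n,
\end{equation*}
where the normalization constant equals $n$ in the Fano case. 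Applying Proposition \ref{KAK int}, using (\ref{MA}) and (\ref{+S}), and substituting $\dot\phi_t = -\dot u_t$ converts this into a polytope integral with the weighted measure $e^{\theta_X(y)}\pi(y)\,dy$. Because $\theta_X$ is linear in $y$, the extra term $\Delta\theta_X$ contributes only a first-order piece of the form $c^i u^{ij}_{,j}$ together with a cross-term from $\chi(\nabla u)$ of the form $c^i\,\partial_i\chi|_{x=\nabla u}$.

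The heart of the argument is to carry out the same integration-by-parts chain as in the proof of Proposition \ref{5104}, with the smooth bounded factor $e^{\theta_X(y)}$ carried throughout. Each time a $y$-derivative falls on $e^{\theta_X(y)}$ it produces a constant factor $c^i$, and these extra pieces combine with the $\Delta\theta_X$ contribution identified above to reassemble into exact time-derivatives of $-\int_{2P_+}\log\det(u_{,ij})e^{\theta_X}\pi\,dy$ and $\int_{2P_+}\chi(\nabla u)e^{\theta_X}\pi\,dy$, yielding $d\mathcal N^X(u_t)/dt$. The Weyl-wall boundary contributions still vanish because $\pi$ vanishes quadratically there; the outer-face contributions, with $\Lambda_A\equiv 1$ in the Fano case and Lemma \ref{bound-measure}, reorganize via the Fano specialization of (\ref{+5119}) into the clean linear functional $\mathcal L^X(u) = \int_{2P_+}\langle y - 4\rho,\nabla u\rangle e^{\theta_X(y)}\pi\,dy$.

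The main technical nuisance is bookkeeping: one must verify that the three new sources of extra terms --- the $\Delta\theta_X$ piece, the $c^i$'s produced by derivatives landing on $e^{\theta_X}$, and the $-n$ normalization --- collapse cleanly into $d\mathcal K^X(u_t)/dt$. The simplifications that make this routine are twofold: first, $\theta_X$ is affine in $y$, so its derivatives contribute only constant $c^i$'s; second, $c^i\alpha_i = 0$ for every $\alpha\in\Phi_+$ ensures that the manipulations near the Weyl walls (in particular the analog of (\ref{5100})) proceed verbatim, and that the constant-in-$u$ part is absorbed via the first normalization in (\ref{coefficients of X 2}). Integrating in $t$ from $0$ to $1$ then yields $\mu_{\omega_0}^X(\phi) = V^{-1}\mathcal K^X(u) + \text{const}$.
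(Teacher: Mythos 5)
Your strategy is genuinely different from the paper's, and as written it contains a real gap. The paper does \emph{not} differentiate $\mu^X_{\omega_0}$ along a path and redo the integration-by-parts chain of Proposition \ref{5104}. Instead it computes the two terms in the definition of $\mu^X_{\omega_0}$ directly: by (\ref{MA}) and (\ref{5115}) the density $\frac{\omega_\phi^n}{\omega_0^n}e^{\phi-h_0}$ reduces on $\mathfrak a_+$ to $MA_{\mathbb R}(\psi)\prod_\alpha\langle\alpha,\nabla\psi\rangle^2 e^{\psi}/\mathbf J(x)$, so the entropy term Legendre-transforms immediately into $\int_{2P_+}[-\log\det(u_{,ij})+\chi(\nabla u)]e^{\theta_X}\pi\,dy$ plus a $\psi$-term and the harmless constant $\int\log\pi\cdot e^{\theta_X}\pi\,dy$; and the path-integral term is computed \emph{exactly}, since $\dot\phi_t=-\dot u_t$ and the pushforward measure $e^{\theta_X(y)}\pi(y)\,dy$ on $2P_+$ is independent of $t$, giving $\int_{2P_+}(u-u_0)e^{\theta_X}\pi\,dy=\int_{\mathfrak a_+}(x^i\psi_{,i}-\psi)e^{\theta_X}MA_{\mathbb R}(\psi)\prod_\alpha\langle\alpha,\nabla\psi\rangle^2\,dx+C$. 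The two $\psi$-terms cancel and $x^i\psi_{,i}=\langle y,\nabla u\rangle$, which is the whole proof; no boundary terms, no Weyl-wall analysis, no scalar curvature formula is needed. Your preliminary observations (that $\theta_X$ reduces to the fixed affine function $c^iy_i+c$ with $c^i\alpha_i=0$) are correct and are indeed used by the paper, but the rest of your plan replaces a three-line exact computation with the hardest part of Section 3.

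The gap is twofold. First, the variational formula you quote, with integrand $S(\omega_{\phi_t})+\Delta_{\omega_{\phi_t}}\theta_X-n$, is not the derivative of the $\mu^X_{\omega_0}$ defined in Section 5: differentiating the entropy term against the weight $e^{\theta_X(\omega_{\phi_t})}$ produces additional first-order contributions of the form $X(h_{\phi_t})$ and $X(\theta_X)=|X|^2_{\omega_{\phi_t}}$ (the correct integrand in \cite{TZ02} is built from $(\Delta+X)(h_{\phi_t}-\theta_X(\omega_{\phi_t}))$), and these survive the reduction to $2P_+$ as nontrivial integrals against $c^i$ that your bookkeeping does not account for. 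Second, the step you yourself identify as ``the heart of the argument'' --- that the $\Delta\theta_X$ piece, the $c^i$'s generated when derivatives land on $e^{\theta_X}$, and the normalization all reassemble into $d\mathcal K^X(u_t)/dt$ --- is asserted, not carried out; since this is precisely where an error in the variational formula would surface, the proof is not complete as it stands. Both issues disappear if you follow the paper and exploit the exactness of the path integral on the polytope rather than differentiating.
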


\begin{proof}
By (\ref{MA}) and (\ref{5115}), we reduce ${\frac{\omega_{\phi}^n}{\omega_0^n}}e^{\phi-h_0}$ to a function on $\mathfrak a_+$ by
$$\frac{\omega_{\phi}^n}{\omega_0^n}e^{\phi-h_0}=
\frac{MA_{\mathbb R}(\psi)\prod_{\alpha\in\Phi_+}\langle\alpha,\nabla \psi\rangle^2}{J(x)}e^\psi.$$
Then
\begin{align}
&\ \ \ {\frac 1 {C_H}}\int_M\log\left({\frac{\omega_{\phi}^n}{\omega_0^n}e^{\phi-h_0}}\right)e^{\theta_X(\omega_{\phi})}\,\omega_{\phi}^n
\nonumber\\
&=\int_{\mathfrak a_+}\left[\log MA_{\mathbb R}(\psi)+\psi+\chi(x)\right]e^{\theta_X(y)}MA_{\mathbb R}(\psi)\prod_{\alpha\in\Phi_+}\langle\alpha,\nabla \psi\rangle^2dx+C_{\pi}\nonumber\\
&=\int_{2P_+}\left[-\log\det(u_{,ij})+\chi(\nabla u)\right]e^{\theta_X(y)}\pi\,dy\nonumber\\
&\ \ \ +\int_{\mathfrak a_+}\psi e^{\theta_X(y)}MA_{\mathbb R}(\psi)\prod_{\alpha\in\Phi_+}\langle\alpha,\nabla \psi\rangle^2dx+C_{\pi},\label{311}
\end{align}
where $C_{\pi}=\int_{2P_+}\log\pi(y)\cdot e^{\theta_X(y)}\pi \,dy$
is a uniform constant. On the other hand,
\begin{align}
&\ \ \ -{\frac 1 {C_H}}\int_0^1\int_M\dot{\phi}_te^{\theta_X(\omega_{\phi_t})}\omega_{\phi_t}^n\wedge dt\notag\\
&=-\int_0^1\int_{\mathfrak a_+}\dot{\phi}_te^{\theta_X(y)}MA_{\mathbb R}(\psi_{t})\prod_{\alpha\in\Phi_+}\langle\alpha,\nabla \psi_{t}\rangle^2dx\wedge dt\notag\\
&=\int_{2P_+}ue^{\theta_X(y)}\pi\,dy-\int_{2P_+}u_{0}e^{\theta_X(y)}\pi\,dy\notag\\
&=\int_{\mathfrak a_+}\left(x^i{\frac{\partial \psi}{\partial x^i}}-\psi\right)e^{\theta_X(y)}MA_{\mathbb R}(\psi)\prod_{\alpha\in\Phi_+}\langle\alpha,\nabla \psi\rangle^2dx+C.\notag
\end{align}
Combining  this with (\ref{311}), we get
\begin{equation}
\begin{aligned}
\mu_{\omega_0}^X(\phi)&={\frac1{V}}\int_{2P_+}\langle y,\nabla u\rangle e^{\theta_X(y)}\pi\,dy\\
&\  \ +{\frac1{V}}\left(\int_{2P_+}\chi\left(\nabla u\right)e^{\theta_X(y)}\pi\,dy
-\int_{2P_+}\log\det\left(u_{,ij}\right)e^{\theta_X(y)}\pi\,dy\right)+const.\nonumber
\end{aligned}
\end{equation}
This proves the proposition.
\end{proof}

\subsection{Properness}
Analogous to Proposition \ref{5301}, we have

\begin{prop}\label{Mlinear}
Under $(\ref{bar4})$,  it holds
\begin{equation}\label{soliton-linear}
\mathcal L^X(u)\geq\lambda_X\int_{\partial(2P_+)}u\langle y,\nu\rangle e^{\theta_X(y)}\pi\,d\sigma_0,~\forall u\in\hat {\mathcal C}_{\infty,+},
\end{equation}
where $\lambda_X>0$ is a uniform constant.
\end{prop}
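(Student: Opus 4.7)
The plan is to adapt the contradiction scheme of Proposition \ref{5301}, with $bar_X$ playing the role of $\widetilde{bar}_{ss}$ and condition (\ref{bar4}) the role of (\ref{tildebar1}). The first step is the decomposition
$$\mathcal L^X(u) = A(u) + B(u), \ A(u) := \int_{2P_+}\langle y-bar_X, \nabla u\rangle e^{\theta_X}\pi\,dy, \ B(u) := \int_{2P_+}\langle bar_X-4\rho, \nabla u\rangle e^{\theta_X}\pi\,dy.$$
Convexity of $u$ gives $\langle y-bar_X,\nabla u(y)\rangle \geq u(y)-u(bar_X)$, and the subgradient inequality at $bar_X$ gives $u(y)-u(bar_X) \geq \langle \nabla u(bar_X), y-bar_X\rangle$. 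Integrating both against $e^{\theta_X}\pi\,dy$ and invoking the defining identity $\int (y-bar_X)e^{\theta_X}\pi\,dy=0$ yields $A(u)\geq\int(u-u(bar_X))e^{\theta_X}\pi\,dy \geq 0$. Since $u$ is $W$-invariant, $\nabla u$ maps $2P_+$ into $\bar{\mathfrak a}_+$; combined with $bar_X-4\rho \in \Xi$ this gives $B(u)\geq 0$, hence $\mathcal L^X(u)\geq 0$.

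To upgrade this to the quantitative estimate (\ref{soliton-linear}) I would argue by contradiction. If (\ref{soliton-linear}) fails then there is $\{u_k\}\subset\hat{\mathcal C}_{\infty,+}$ with $\int_{\partial(2P_+)}u_k\langle y,\nu\rangle e^{\theta_X}\pi\,d\sigma_0=1$ and $\mathcal L^X(u_k)\to 0$. Extracting a locally uniformly convergent subsequence produces a convex limit $u_\infty$ on $2P_+$ with $u_\infty(O)=0$ and $u_\infty\geq 0$. Since $A(u_k), B(u_k)\geq 0$ both tend to zero, the pointwise nonnegative quantity $u_k(y)-u_k(bar_X)-\langle \nabla u_k(bar_X),y-bar_X\rangle$ must vanish in the limit, forcing $u_\infty(y)=\langle\xi,y\rangle$ for some $\xi\in\bar{\mathfrak a}_+$. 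Substituting into $B(u_\infty)=0$ gives $\langle \xi,bar_X-4\rho\rangle=0$; since $bar_X-4\rho\in\Xi$ pairs strictly positively with any element of $\bar{\mathfrak a}_+\setminus\mathfrak a_t$, this forces $\xi\in\mathfrak a_t$. Thus $u_\infty$ depends only on the toric component of $y$, and because $O$ lies in the relative interior of $\mathfrak a_t^*\cap 2P$ with $u_\infty\geq 0$ and $u_\infty(O)=0$, I conclude $\xi=0$ and $u_\infty\equiv 0$.

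The final contradiction comes from an integration-by-parts identity analogous to (\ref{+5119}). Using $\mathrm{div}[(y-4\rho)e^{\theta_X}\pi] = (r+\langle c,y-4\rho\rangle)e^{\theta_X}\pi + \langle y-4\rho,\nabla\pi\rangle e^{\theta_X}$ and the vanishing of $\pi$ on every Weyl wall (which kills the wall boundary contributions), one obtains a boundary sum over the outer faces $F_A$ dominated from below by $\lambda_0$ (uniform positive) times the normalized quantity, minus interior integrals $\int_{2P_+}u_k(\cdots)e^{\theta_X}\pi\,dy$ which tend to zero because $u_\infty\equiv 0$ (combining local uniform convergence with the a priori $L^1$-bound inherited from the analogue of Lemma \ref{norm-convex} and uniform integrability). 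This yields $\mathcal L^X(u_k)\geq\lambda_0-o(1)$, contradicting $\mathcal L^X(u_k)\to 0$. I expect the main obstacle to be controlling the indefinite-sign interior term $\langle c,y-4\rho\rangle e^{\theta_X}\pi$: here the defining equations (\ref{coefficients of X 2}) for $c$ (expressing the vanishing of the modified Futaki invariant) should absorb the offending contribution, much as $\langle 4\rho,\nabla\pi\rangle\geq 0$ on $2P_+$ handled the analogous step in Proposition \ref{5301}.
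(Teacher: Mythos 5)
Your proposal is correct and follows essentially the same route as the paper: the paper establishes $\mathcal L^X(u)\geq 0$ via exactly your decomposition (the gradient inequality $\langle y-bar_X,\nabla u\rangle\geq u(y)-u(bar_X)\geq\langle y-bar_X,\nabla u|_{bar_X}\rangle$ together with \eqref{bar4} and the defining property of $bar_X$), and then simply says to repeat the compactness/contradiction argument of Proposition \ref{proper-red}'s linear estimate, which is precisely the argument you spell out. Your only inessential detour is invoking \eqref{coefficients of X 2} to handle the $\langle c,y-4\rho\rangle$ term in the final step — boundedness of that factor together with $\int_{2P_+}u_k\pi\,dy\to 0$ already suffices.
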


\begin{proof}
By  (\ref{bar4}), we have
$$\langle bar_X-4\rho,\nabla u\rangle\geq0,~\forall y\in 2P_+.$$
Then $$\mathcal L^X(u)\geq\int_{2P_+}\langle y-bar_X,\nabla u\rangle e^{\theta_X(y)}\pi\,dy.$$
On the other hand, by  the convexity of $u$, we have
$$\langle y-bar_X,\nabla u\rangle\geq u(y)-u(bar_X)\geq\langle y-bar_X,\nabla u|_{bar_X}\rangle.$$
Thus
\begin{eqnarray*}
\mathcal L^X(u)\geq\int_{2P_+}\langle y-bar_X,\nabla u|_{bar_X}\rangle e^{\theta_X(y)}\pi\,dy=0.
\end{eqnarray*}
Now we can follow the arguments in the proof of Proposition \ref{5301} to get (\ref{soliton-linear}).
\end{proof}

\begin{prop}
Under (\ref{bar4}), for any $\delta\in(0,1)$, there exists a uniform constant $C_\delta>0$ such that
\begin{equation}\label{f-x-proper}
\mathcal K^X(u)\geq\delta\int_{2P_+}ue^{\theta_X(y)}\pi\,dy-C_\delta,~\forall u\in\hat{\mathcal C}_{\infty,+}.
\end{equation}
\end{prop}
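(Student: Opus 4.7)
The plan is to follow the three-step strategy used for Proposition \ref{proper-red}, with the weight $\pi$ replaced by $e^{\theta_X(y)}\pi$ throughout: first lower-bound $\mathcal L^X$ by a weighted boundary integral (this is already Proposition \ref{Mlinear}, which holds by hypothesis (\ref{bar4})), then control $\mathcal N^X$ via convexity and two integrations by parts, and finally conclude by the scaling $u\mapsto\epsilon u$.

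For the nonlinear part, I start from the pointwise convexity inequality (\ref{+42}), multiply by $e^{\theta_X}\pi$ and integrate. Performing the same two integrations by parts that led to (\ref{49}), the new terms produced by differentiating $e^{\theta_X}=e^{c^iy_i+c}$ are of the form $c^i\cdot(\text{smooth and bounded, or of the same asymptotic type already handled})$, because $c=\nabla\theta_X$ is a constant vector and $e^{\theta_X}$ is smooth and uniformly bounded between two positive constants on $\overline{2P_+}$. Following the derivation of Proposition \ref{non-linear part} verbatim, this yields
\begin{equation*}
\mathcal N^X(u) \geq -\tilde C_{\Lambda}\int_{\partial(2P_+)} u\langle y,\nu\rangle e^{\theta_X}\pi\,d\sigma_0 - \tilde C_L \mathcal L^X(u) + \int_{2P_+} Q\, u\, e^{\theta_X}\pi\,dy - C_0,
\end{equation*}
with the same singular quantity $Q$ as in (\ref{59}).

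The next step is a weighted analog of Proposition \ref{Q int}. The pointwise asymptotic estimates of Lemma \ref{521} near each Weyl wall, together with the interior estimate of Lemma \ref{52}, depend only on $u_0$ and the polytope geometry, so inserting the bounded positive factor $e^{\theta_X}$ does not alter their conclusions. Combined with the weighted forms of Lemma \ref{norm-convex} and of the identity (\ref{43}) (both valid thanks to $c_1\leq e^{\theta_X}\leq c_2$ and Proposition \ref{Mlinear}), we obtain
\begin{equation*}
\left|\int_{2P_+} Q\, u\, e^{\theta_X}\pi\,dy\right| \leq \hat C_L \mathcal L^X(u) + \hat C_{\Lambda}\int_{\partial(2P_+)} u\langle y,\nu\rangle e^{\theta_X}\pi\,d\sigma_0 + \hat C\int_{2P_+} u\, e^{\theta_X}\pi\,dy.
\end{equation*}
Applying the scaling trick $u\mapsto\epsilon u$ to the inequality $\mathcal N^X(u)>\mathcal N^X(\epsilon u)-n\log\epsilon$ exactly as in (\ref{607})--(\ref{605}), and choosing $\epsilon=\epsilon(\delta)$ small enough to absorb the positive coefficient of $\int u\,e^{\theta_X}\pi\,dy$ against $\delta$, one arrives at (\ref{f-x-proper}).

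The only genuinely new technical point is verifying that the extra factor $e^{\theta_X}$ does not worsen the singular behaviour of the integrands near the Weyl walls. This is where the identity $\langle c,\alpha\rangle=0$ for every $\alpha\in\Phi_+$ (equivalent to $\mathrm{Re}(X)\in\mathfrak a_t$, cf.\ the discussion preceding (\ref{coefficients of X 2})) is crucial: any term produced by moving a derivative onto $e^{\theta_X}$ carries a factor of $c$, which is orthogonal to every root direction, so no new $\alpha(y)^{-1}$ singularities appear beyond those already controlled by the estimate of $Q$. This cancellation, which is the soliton analog of the one exploited in Lemma \ref{5205}, is the sole but essential obstacle: once it is in place, every estimate from Section~4 transplants to the weighted setting mechanically.
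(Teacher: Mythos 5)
Your proposal is correct and follows essentially the same route as the paper: the convexity inequality (\ref{+42}) integrated against $e^{\theta_X}\pi$, integration by parts where the crucial cancellation $c^i\alpha_i=0$ (hence $c^i\partial\chi/\partial x^i+4c^i\rho_i=0$) kills the potentially singular new terms while the remaining extra terms $u_0^{ij}c^ic^j$, $u_{0,j}^{ij}c^i$, $u_0^{ij}c^j\pi_{,i}$ are bounded, the weighted version of the $Q$-estimate via $c_1\le e^{\theta_X}\le c_2$, and the scaling trick from Proposition \ref{proper-red}. This matches the paper's argument (its displays (\ref{4107})--(\ref{4110})) step for step.
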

\begin{proof}

Since $-\log\det$ and $\chi(x)$ are  both   convex,   by (\ref{MN}), we have
\begin{equation}\label{4106}
\mathcal N^X(u)\geq\int_{2P_+}\left(\left.{\frac{\partial \chi}{\partial x^i}}\right|_{x=\nabla u_0}+4\rho_i\right)u_{,i} e^{\theta_X(y)}\pi\,dy
-\int_{2P_+}u_0^{ij}u_{,ij}e^{\theta_X(y)}\pi\,dy+C_0.\nonumber
\end{equation}
By integration by parts, we get an analogue of (\ref{49}),
\begin{align}
\mathcal N^X(u)\geq&
-\sum_A\int_{F_A}{\frac{2}{\lambda_{A}}}\langle y,\nu_A\rangle ue^{\theta_X(y)}\pi \,d\sigma_0\nonumber\\
&-\int_{2P_+}\left( u^{ij}_{0,ij}+u_0^{ij}c^ic^j+2u_{0,j}^{ij}c^i\right)ue^{\theta_X(y)}\pi \,dy\nonumber\\
&-\int_{2P_+}2\left(u_{0,j}^{ij}+u_0^{ij}c^j\right)\pi_{,i}e^{\theta_X(y)}udy
+\int_{2P_+}Que^{\theta_X(y)}\pi dy.\label{4107}
\end{align}
Here we used the fact that
$$\left(\left.{\frac{\partial \chi}{\partial x^i}}\right|_{x=\nabla u_0}+4\rho_i\right)\pi(y)=0,~\forall y\in \partial(2P^+)$$
and
$$c^i{\frac{\partial \chi}{\partial x^i}}(x)+4c^i\rho_i=-2\sum_{\alpha\in\Phi_+}c^i\alpha_i\cdot\coth x+4c^i\rho_i=0.
$$

On the other hand, $\mathcal L^X(u)$ can be rewritten as
$$\mathcal L^X(u)=\int_{\partial(2P_+)}\langle y-4\rho,\nu\rangle ue^{\theta_X(y)}\pi\,d\sigma_0-\int_{2P_+}[n+c^i(y_i-4\rho_i)]u e^{\theta_X(y)}\pi\,dy.$$
Note that $\theta_X(y)$ is uniformly bounded on $2P^+$.   Then we have
\begin{align}
\int_{2P_+}\langle\rho,{\nabla\pi}\rangle ue^{\theta_X(y)}dy\leq \mathcal L^X(u)+C\int_{\partial(2P_+)}u\langle y,\nu\rangle e^{\theta_X(y)}\pi\, d\sigma_0,~\forall u\in\hat{\mathcal C}_{\infty,+}.\nonumber
\end{align}
Thus by (\ref{4107}), we get
\begin{eqnarray}
\mathcal N^X(u)&\geq& C_0-C_\Lambda\int_{\partial(2P_+)}u\langle y,\nu\rangle e^{\theta_X}\pi\,d\sigma_0-C_L\mathcal L(u)+\int_{2P_+}Que^{\theta_X}\pi\,dy\nonumber\\
&:=&C_0-\mathcal L^X_B(u),~\forall u\in\hat{\mathcal C}_{\infty,+}.\label{n-l-x}
\end{eqnarray}

By Proposition \ref{Q int},  as in (\ref{605}),  we  see that  for any $0<\delta\leq1$ there is a constant $C_{\delta}>0$ independent of $u$ such that,
\begin{equation}\label{4110}
\mathcal L_B^X(u)-\mathcal L^X(u)\leq C_{\delta}\mathcal L^X(u)-\delta\int_{2P_+}ue^{\theta_X(y)}\pi\, dy.
\end{equation}
Now by  (\ref{n-l-x}) and (\ref{4110}),  (\ref{f-x-proper}) follows by the argument in the proof of Proposition.
\end{proof}

Propostion \ref{M-energy}  implies Theorem \ref{thm40}  by  the following  lemma, which can be derived in a same way as for  Lemma \ref{609} (also see \cite[Lemma 3.4]{WZZ}).

\begin{lem}\label{4112}
There exists a uniform $C_{J,X}>0$ such that
\begin{equation}
\left| J_{\omega_0,X}(\tilde\phi)-{\frac 1 {V}}\int_{2P_+}\tilde ue^{\theta_X(y)}\pi\, dy\right|\leq C_{J,X},\nonumber
\end{equation}
where $\tilde u\in\hat{\mathcal C}_{\infty,W}$ and $\psi_0+\tilde{\phi}$ is the Legendre function of $\tilde u$.
\end{lem}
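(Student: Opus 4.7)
My plan follows closely the derivation sketched for Lemma \ref{609}, adapted to the presence of the weight $e^{\theta_X(y)}$. The crucial observation is that, under the Legendre substitution $y=\nabla\psi$, the potential $\theta_X(\omega_\phi)=c^i\partial\psi/\partial x^i+c$ becomes exactly $\theta_X(y)=c^iy_i+c$. Hence the weighted measure $e^{\theta_X(\omega_\phi)}\omega_\phi^n$ pulls back under $KAK$-reduction to $e^{\theta_X(y)}\pi(y)\,dy$, independently of the path parameter $t$. This is what makes the computation work as cleanly as in the unweighted case.

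First I would fix any smooth path $\phi_t\in\mathcal H_{K\times K}(\omega_0)$ joining $0$ and $\tilde\phi$. Since $e^{\theta_X(\omega_0)}\omega_0^n$ is $t$-independent, the first half of $J^X_{\omega_0}(\tilde\phi)$ collapses to $\frac{1}{V_M}\int_M\tilde\phi\,e^{\theta_X(\omega_0)}\omega_0^n$. For the second half, combining the KAK formula (Proposition \ref{KAK int}), the Monge--Amp\`ere expression \eqref{MA}, the identity $\dot\phi_t=-\dot u_t$ and the invariance of $\theta_X$ under Legendre substitution gives
\begin{equation*}
-\frac{1}{V_M}\int_0^1\!\!\int_M\dot\phi_t\,e^{\theta_X(\omega_{\phi_t})}\omega_{\phi_t}^n\wedge dt
=\frac{1}{V}\int_{2P_+}(\tilde u-u_0)\,e^{\theta_X(y)}\pi\,dy.
\end{equation*}
Adding the two pieces and rearranging,
\begin{equation*}
J^X_{\omega_0}(\tilde\phi)-\frac{1}{V}\int_{2P_+}\tilde u\,e^{\theta_X(y)}\pi\,dy
=\frac{1}{V_M}\int_M\tilde\phi\,e^{\theta_X(\omega_0)}\omega_0^n-\frac{1}{V}\int_{2P_+}u_0\,e^{\theta_X(y)}\pi\,dy,
\end{equation*}
where the last term is a fixed constant depending only on $\omega_0$ and $X$.

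It therefore suffices to bound the remaining integral $\frac{1}{V_M}\int_M\tilde\phi\,e^{\theta_X(\omega_0)}\omega_0^n$ uniformly in $\tilde\phi$. Since $\theta_X(\omega_0)$ equals $c^iy_i+c$ with $y=\nabla\psi_0(x)\in 2P$ and $2P$ is compact, the factor $e^{\theta_X(\omega_0)}$ is bounded above and below by positive constants on $M$; consequently it is enough to prove the unweighted estimate $\bigl|\int_M\tilde\phi\,\omega_0^n\bigr|\le C$. This is exactly the reduction step from Lemma 2.2 of \cite{ZZ}: the normalizations $\tilde u(O)=0$ and $\nabla\tilde u(O)=0$ force $\tilde u\ge 0$ on $2P$ and, by the Legendre duality $\tilde u(y)+\tilde\psi(x)=\langle x,y\rangle$ at conjugate points, pin down $\tilde\psi(0)=0$ and $\nabla\tilde\psi(0)=0$. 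Convexity of $\tilde\psi$ and $\psi_0$, both with gradient image $2P_+$, then yields a pointwise bound on $|\tilde\psi-\psi_0|=|\tilde\phi|$ on $\mathfrak a_+$ in terms of $2P$ and $\psi_0$ alone, and integrating via the KAK formula gives the claimed uniform constant $C_{J,X}$.

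The main obstacle is precisely this last uniform $L^1$-bound for $\tilde\phi$; once it is secured, the weight $e^{\theta_X(y)}$ only contributes harmless bounded factors. The delicate point is that $\tilde u$ is allowed to be an arbitrary element of $\hat{\mathcal C}_{\infty,W}$ with no quantitative control beyond its two normalizations and its Guillemin-type boundary singularity, so the estimate must rely purely on convexity and the geometry of $2P$, as in the toric case of \cite{ZZ}.
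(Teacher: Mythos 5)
Your proposal is correct and follows essentially the same route as the paper, which proves Lemma \ref{4112} by repeating the argument for Lemma \ref{609}: split $J^X_{\omega_0}$ into $\frac{1}{V_M}\int_M\tilde\phi\,e^{\theta_X(\omega_0)}\omega_0^n+\frac{1}{V}\int_{2P_+}(\tilde u-u_0)e^{\theta_X(y)}\pi\,dy$ using the $t$-independence of the pushed-forward weighted measure, and then invoke the convexity/normalization argument of \cite[Lemma 2.2]{ZZ} for the uniform bound on the remaining term. Your added observation that $e^{\theta_X(\omega_0)}$ is uniformly bounded because $\nabla\psi_0$ has image in the compact set $2P$, so the weighted case reduces to the unweighted one, is exactly the point the paper leaves implicit.
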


\section{Minimizers of K-energy}

In this section,  we discuss the weak minimizers of  $\mathcal K(u)$
under the assumption that the reduced K-energy is proper. We will adapt the argument in \cite{ZZ08}.

\subsection{Extension of $\mathcal K(\cdot)$}
Let  $P^*$  be a  union of $P$ and its open codim-1 faces. We need to complete  the space $\hat{\mathcal C}_{\infty,W}$ of  functions   on  $2P^*$.   Consider a class of convex functions on $2P^*$  which satisfies
\begin{align}\label{c-bounded}\int_{\partial(2P_+)}u\langle y,\nu\rangle \pi\, d\sigma_0\leq \kappa~
{\rm and }~\int_{2P_+}u \langle\rho,\nabla\pi\rangle dy\leq \kappa,
\end{align}
where   $\kappa\geq 0$ is a fixed  number.  Set
\begin{align}
\mathcal{\tilde C_*}^\kappa=\{u\in C(2P^*)|~&u~\text{ is a $W$-invariant convex function on } 2P^*,~\notag\\
&\text{ which is normalized as in } (\ref{normalization-u})
 \text{ such that } (\ref{c-bounded})~{\rm holds}\},\notag
\end{align}
and $\mathcal{\tilde C_*}=\cup_{\kappa\geq 0} \mathcal{\tilde C_*}^\kappa.$
We show that each  $\mathcal{\tilde C_*}^\kappa$ is a complete space. Namely,

\begin{lem}\label{extend u}
Let $\{u_k\}\subset\mathcal{\tilde C_*}^\kappa$  be a sequence. Then there is a subsequence which converges locally uniformly to some $u\in\mathcal{\tilde C_*}^\kappa$.
\end{lem}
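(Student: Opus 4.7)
The plan is to obtain locally uniform $L^\infty$ bounds on $\{u_k\}$ on compact subsets of $2P^*$, extract a locally uniformly convergent subsequence via the classical compactness of bounded convex functions, and verify the defining properties of the limit by Fatou-type arguments.

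First I would derive the uniform bounds. Since each $u_k\in\tilde{\mathcal C}_*^\kappa$ is convex with $u_k(O)=0=\min_{2P}u_k$, applying Lemma \ref{norm-convex} together with the first estimate in \eqref{c-bounded} yields
$$\int_{2P_+} u_k\,\pi\, dy \leq \Lambda\kappa.$$
For a compact subset $K\subset 2P^*$, $K$ stays away from the codim-$\geq 2$ strata of $2P$, so for points of $K$ in $\mathrm{int}(2P)$ away from the Weyl walls the weight $\pi$ is bounded below and the $L^1$ bound combined with convexity and $u_k\geq 0$ gives a pointwise bound. The $W$-invariance extends $u_k$ to a convex function on all of $2P$; at a point $y_0\in W_\alpha$ written as $y_0=\tfrac12(y+s_\alpha y)$ for nearby $y$, convexity yields $u_k(y_0)\leq \tfrac12(u_k(y)+u_k(s_\alpha y))=u_k(y)$, carrying the bound across the wall. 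For a point in the relative interior of an outer codim-$1$ face $F_A$, the boundary estimate in \eqref{c-bounded} combined with convexity of $u_k|_{F_A}$ and the same reflection across any $W_\alpha$ meeting $F_A$ produces a pointwise bound on compact subsets of $\mathrm{relint}(F_A)$.

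Second, locally bounded convex functions on a convex set are locally Lipschitz in the interior, so Arzel\`a--Ascoli extracts a subsequence $\{u_{k_j}\}$ converging locally uniformly on compact subsets of $\mathrm{int}(2P)$ to a convex function $u$. The bounds up to the open codim-$1$ faces together with the equi-Lipschitz property from convexity upgrade this to local uniform convergence on compact subsets of $2P^*$, so that $u\in C(2P^*)$. Convexity, $W$-invariance, and the normalization $u(O)=0=\min u$ pass to the limit pointwise. Finally the integral bounds persist by Fatou's lemma:
$$\int_{2P_+} u\,\langle\rho,\nabla\pi\rangle\, dy \leq \liminf_{j\to\infty} \int_{2P_+} u_{k_j}\,\langle\rho,\nabla\pi\rangle\, dy \leq \kappa,$$
and similarly for the boundary integral, using local uniform convergence on compact subsets of the relative interiors of the outer faces together with the uniform $L^1$ control near the codim-$\geq 2$ strata where $\pi$ vanishes.

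The main obstacle will be the pointwise bound at the non-transverse intersections of Weyl walls with outer faces, i.e., near the locus $\Omega^*$ from Section 4. There $\pi$ vanishes to positive order, so neither integral bound in \eqref{c-bounded} is directly effective; one has to exploit $W$-invariance together with convexity along the outer faces carefully, in the spirit of the toric argument of \cite[Section 2]{ZZ08} but adapted to the weighted measure $\pi\, dy$ and to the non-orthogonal intersections between Weyl walls and outer faces.
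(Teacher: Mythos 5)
Your argument is correct and follows the paper's strategy in substance: both start from the uniform bound $\int_{2P_+}u_k\pi\,dy\le\Lambda\kappa$ supplied by Lemma \ref{norm-convex} and then must convert control in the degenerate measure $\pi\,dy$ into genuine local bounds across the Weyl walls. The paper does this at the integral level by invoking Lemma \ref{52}, which gives $\int_{2P'_+}u\,dy\le C_{P'}\int_{2P'_+}u\pi\,dy$ on interior $W$-invariant polytopes $2P'$ exhausting $2P$, and then applies the standard compactness of convex functions with locally bounded $L^1$ norm; your pointwise reflection inequality $u_k(y_0)\le u_k(y_0+t\alpha)$ for $y_0\in W_\alpha$ is exactly the mechanism underlying the proof of Lemma \ref{52}, so you are in effect re-deriving that lemma in pointwise form rather than citing it. Two remarks. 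First, the ``main obstacle'' you flag near $\Omega^*$ does not arise: as observed in Section 4, a non-orthogonal intersection $F_{\tilde A}\cap W_\alpha$ is contained in $F_{\tilde A}\cap F_{\tilde A,\alpha}$, a face of codimension at least two, and such faces are excluded from $2P^*$ by definition; a compact subset of $2P^*$ therefore stays at positive distance from $\Omega^*$, and the only Weyl walls meeting the relative interior of an outer facet do so orthogonally, so your reflection argument already covers every point you need. Second, the paper does not attempt (and does not need) locally uniform convergence on the open codimension-one faces: it only proves convergence on compact subsets of the interior and defines the limit on the facets by the radial limits $u(z)=\lim_{t\to1^-}u(tz)$, after which the membership $u\in\tilde{\mathcal C}_*^\kappa$ follows from the monotonicity of $t\mapsto u_k(tz)$ together with Fatou, as in your last step.
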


\begin{proof}
For any domain $\Omega\subset2P$ with ${\rm dist}(\Omega,\partial(2P))>0$, one can construct  a $2P'$ as in
the proof of Proposition \ref{Q int}  such that $\Omega\subset2P'$. By Lemma \ref{norm-convex}
 and Lemma \ref{52}, we see
 $$\int_{2P'}u  \, dy=  \#W\cdot\int_{2P'_+}u  \, dy  \leq  C_0\kappa.$$
 Thus there is a subsequence (still denoted by $\{u_k\}$) converging  locally uniformly to some $u$  on $2P'$.   Clearly $u$ is a $W$-invariant, normalized convex function on $2P'$.  Since $2P'$  exhausts $2P$,  $u\in C(2P)$. Moreover,
 $u$ satisfies (\ref{normalization-u}).
  Defining $u$ on the boundary by  $u(z):=\displaystyle\lim_{t\to1^-} u(tz),$
then $u\in\mathcal{\tilde C_*}^\kappa.$
\end{proof}

It is clear that the linear part $\mathcal L(u)$ is well-defined for $u\in \mathcal{\tilde C_*}$. To make $\mathcal N (u)$ well-defined, we  let  $\partial^2u=D^2u$ at the points where the Hessian exist,  and $\partial ^2u =0$ otherwise. This can be done since  the second derivatives of a convex function exist almost everywhere. In fact,
$\mu_r[u]=\det(\partial^2u)\,dy$ defines the regular part of the Monge-Amp\'ere measure $\mu[u]=\mu_r[u]+\mu_s[u]$ \cite{TW}, where the supporting set $\mathcal S_u$ of $\mu_s[u]$ has Lebesgue measure $0$.
We introduce
$$\mathcal N^+(u):=-\int_{2P_+}\left[\log\det(\partial^2u)+2\sum_{\alpha\in\Phi_+}\log\sinh\alpha(\partial u)-4\rho(\partial u)\right]^+\pi\, dy.$$

The following proposition guarantees that $\mathcal N(u)$ is well-defined for any  $u\in\mathcal{\tilde C_*}$.

\begin{prop}\label{finite N}
For $u\in\mathcal{\tilde C_*}$,  $\mathcal N^+(u)>-\infty$.
More precisely, for any $0<\epsilon<1$,  there is a uniform constant $C(\epsilon)$ such that
\begin{align}\label{N^+}
-\mathcal N^+(u)&\leq\epsilon\left(\int_{\partial(2P_+)}u\langle y,\nu\rangle\pi\, d\sigma_0+\int_{2P_+}(\langle\rho,\nabla \pi\rangle+\pi)u\, dy\right)+C(\epsilon).
\end{align}
\end{prop}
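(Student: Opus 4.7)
The plan is to control the integrand
\[
F(u) := \log\det(\partial^2 u) + 2\sum_{\alpha\in\Phi_+}\log\sinh\alpha(\partial u) - 4\rho(\partial u),
\]
whose positive part defines $-\mathcal N^+(u)$, term by term. Since $\sinh t < \tfrac{1}{2}e^t$ for $t>0$ and $\sum_{\alpha\in\Phi_+}\alpha = 2\rho$, the last two terms combine to give
\[
2\sum_{\alpha\in\Phi_+}\log\sinh\alpha(\partial u) - 4\rho(\partial u) \le -2|\Phi_+|\log 2,
\]
so $F(u)^+\le [\log\det(\partial^2 u)]^+$. To extract the small factor $\epsilon$, I apply the matrix concavity inequality $\log\det A \le \log\det B + \mathrm{tr}(B^{-1}A) - r$ with $A=\partial^2 u$ and $B = s^{-2}\partial^2 u_0$ for $s>0$ small; since $u_0^{ij}u_{,ij}\ge 0$, this yields
\[
[\log\det(\partial^2 u)]^+ \le [\log\det(\partial^2 u_0)]^+ + s^2\, u_0^{ij}u_{,ij} + (2r|\log s| + r).
\]
Inspection of Guillemin's formula shows that $\log\det(\partial^2 u_0)$ has only logarithmic singularities along the outer faces, so $\int_{2P_+}[\log\det(\partial^2 u_0)]^+\pi\,dy$ is a uniform finite constant.

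The core of the proof is therefore the upper bound
\[
\int_{2P_+} u_0^{ij}u_{,ij}\,\pi\,dy \le C\left(\int_{\partial(2P_+)} u\langle y,\nu\rangle\pi\,d\sigma_0 + \int_{2P_+} u(\pi + \langle\rho,\nabla\pi\rangle)\,dy\right).
\]
For smooth $W$-invariant convex $u$, I integrate by parts twice. The first boundary contribution $\int_{\partial(2P_+)} u_0^{ij}\pi\nu_j u_{,i}\,d\sigma_0$ vanishes because $u_0^{ij}\nu_j\to 0$ on outer faces (Lemma~\ref{bound-measure}) and $\pi\equiv 0$ on Weyl walls. The second produces $-\sum_A (2/\lambda_A)\int_{F_A} u\langle y,\nu_A\rangle\pi\,d\sigma_0\le 0$ via the asymptotic $u_{0,j}^{ij}\nu_i\to 2\langle y,\nu_A\rangle/\lambda_A$ from Lemma~\ref{bound-measure}; this nonpositive term is dropped in the upper bound. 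The remaining interior term $\int u\,[u_{0,ij}^{ij}\pi + 2u_{0,j}^{ij}\pi_{,i} + u_0^{ij}\pi_{,ij}]\,dy$ is controlled by $\int u\pi\,dy + \int u\langle\rho,\nabla\pi\rangle\,dy$ via the same analysis of Guillemin's potential near the Weyl walls that underlies (\ref{u-0-ij})--(\ref{u-0-ij-2}) and the proof of Proposition~\ref{Q int}. Choosing $s^2 = \epsilon/C$ then combines with the comparison in the first paragraph to produce (\ref{N^+}).

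The principal obstacle is the approximation step: a typical $u\in\tilde{\mathcal C}_*$ is merely continuous and convex, so $\partial^2 u$ is only a matrix-valued Radon measure and the trace $u_0^{ij}u_{,ij}$ must be interpreted distributionally. I would handle this via a $W$-invariant mollification producing smooth convex approximations $u_\eta\to u$ locally uniformly on $2P_+$; the integration-by-parts argument applies to each $u_\eta$, both sides of (\ref{N^+}) are lower semi-continuous under this convergence (the left-hand side by Fatou applied to the positive part, the right-hand side by continuity of the integrands in $u$), and the hypotheses (\ref{c-bounded}) are uniform in $\eta$, so passing to the limit yields the estimate for $u$ itself.
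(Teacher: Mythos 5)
Your overall architecture (convexity comparison with the Guillemin potential $u_0$, two integrations by parts, the boundary asymptotics of Lemma \ref{bound-measure}, mollification for non-smooth $u$) is the paper's, and the device of comparing with $s^{-2}\partial^2u_0$ to extract the factor $\epsilon$ is a clean substitute for the paper's rescaling trick. But there is a genuine gap at the step you call the core. By discarding $2\sum_{\alpha}\log\sinh\alpha(\partial u)-4\rho(\partial u)$ at the outset via $\sinh t<\tfrac12 e^t$, you lose exactly the cancellation that makes the interior term tractable. After your two integrations by parts the interior term contains $\int_{2P_+}u\,u_0^{ij}\pi_{,ij}\,dy$ \emph{in isolation}, whereas in the paper this piece never appears alone: it sits inside $Q\pi$, where in Case (i) of Lemma \ref{521} the leading singularity $-2u_0^{ij}\alpha_i\alpha_j/(\alpha(y))^2$ cancels against $4|\alpha|^2\coth\langle\alpha,\nabla u_0\rangle/\alpha(y)$ and $-2u_{0,ij}\alpha_i\alpha_j/\sinh^2\langle\alpha,\nabla u_0\rangle$ (the $4-2-2=0$ cancellation), yielding $|Q|\pi\le C\langle\rho,\nabla\pi\rangle$. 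The isolated term does not obey this bound: along the interior of a Weyl wall $W_\alpha$, away from the outer faces, $u_0^{ij}\alpha_i\alpha_j$ has a strictly positive limit and $\pi/(\alpha(y))^2$ is bounded below, so $u_0^{ij}\pi_{,ij}$ tends to a nonzero constant while both $\pi$ and $\langle\rho,\nabla\pi\rangle$ vanish there. Hence controlling $\int u\,u_0^{ij}\pi_{,ij}\,dy$ forces you to control $\int u\,dy$ over full neighbourhoods of the Weyl walls. Lemma \ref{52} supplies this only on the modified polytope $2P'$ with orthogonal intersections; on the corner neighbourhoods $U=2P\setminus 2P'$ the only estimate the paper has is the pointwise bound \eqref{603} for the \emph{full} $Q$, which is false for your isolated term (near $y_0\in W_\alpha\cap F_{a}$ one finds $|u_0^{ij}\pi_{,ij}|\sim l_{a}(y)+\alpha(y)$ versus $\langle\rho,\nabla\pi\rangle\sim\alpha(y)$, and these are not comparable in the regime $\alpha(y)\ll l_a(y)$). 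So ``the same analysis as Proposition \ref{Q int}'' does not apply as invoked; you must either carry the $\chi$-terms through the integration by parts so that the interior term reassembles into $Q\pi$ (as in \eqref{710}), or supply a new integral estimate for $\int_U u\,u_0^{ij}\pi_{,ij}\,dy$.

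Two smaller points. First, the boundary term $\int_{\partial}u_0^{ij}\pi\nu_j u_{,i}\,d\sigma_0$ does not simply ``vanish'': for a general convex $u$ the gradient blows up near $\partial(2P)$ at rate $1/l_{\tilde A}$, exactly cancelling the decay $u_0^{ij}\nu_j=O(l_{\tilde A})$; the paper needs the dilated domains $2P_+^\delta$ and the modulus-of-continuity bound $|\xi^i\tilde u_{h,i}|\le\max\{|\tilde u_h(y\pm\xi)-\tilde u_h(y)|\}$ with $\xi^i=\pi u_0^{ij}\nu_j$ to pass to the limit. Second, mollification alone does not reach a general $u\in\tilde{\mathcal C}_*$, whose boundary values are prescribed only as radial limits; the paper first treats $u\in\tilde{\mathcal C}_*\cap C(\overline{2P})$ and then uses the dilations $u^t(\cdot)=u(t\cdot)$, and your limiting argument should be organized the same way.
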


The following lemma can be proved as in \cite[Lemma 2.2]{ZZ08}. We  omit the proof.

\begin{lem}\label{702}
Let $u\in\mathcal{\tilde C_*}$ and $\{u_k\}\subset\mathcal{\tilde C_*}$ be a sequence of convex functions which  converges locally uniformly to $u$ with $\partial u_k\to\partial u,\,\partial^2u_k\to\partial^2u$
almost everywhere. Suppose that
 \begin{align}\label{condition-regularity}\alpha(\partial u_k),\alpha(\partial u)\geq\epsilon_0>0,\,\forall\alpha\in\Phi_{+}\text{ and }\det(\partial^2 u_k),\det(\partial^2 u)\geq\epsilon_0>0.
 \end{align}
 Then for any $\Omega\Subset 2P,$
$$\begin{aligned}
&\ \ \ \int_{\Omega}(\chi(\partial u)+4\rho(\partial u))\pi\, dy-\int_{\Omega}\log\det(\partial^2u)\pi\, dy\\&=\lim_{k\to\infty}\left[\int_{\Omega}(\chi(\partial u_k)+4\rho(\partial u_k))\pi\, dy-\int_{\Omega}\log\det(\partial^2u_k)\pi\, dy\right].\end{aligned}$$
\end{lem}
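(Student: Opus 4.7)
The plan is to split the integrand into two pieces and prove convergence of each. Write the quantity of interest as
\[
\int_\Omega \bigl[\chi(\partial u_k) + 4\rho(\partial u_k)\bigr]\pi\, dy \;-\; \int_\Omega \log\det(\partial^2 u_k)\,\pi\, dy,
\]
and treat the two integrals separately.

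For the first integral, the local uniform convergence $u_k \to u$ on a domain $\Omega' \Supset \Omega$, combined with convexity, furnishes a uniform Lipschitz bound $|\partial u_k| \le L$ on $\Omega$. Together with the hypothesis $\alpha(\partial u_k) \ge \epsilon_0$, the arguments of $\log\sinh$ remain in a fixed compact subset of $(0,\infty)$, so the integrand $\chi(\partial u_k) + 4\rho(\partial u_k)$ is uniformly bounded on $\Omega$. The bounded convergence theorem, applied using $\partial u_k \to \partial u$ a.e., then yields the convergence.

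For the second integral, the lower bound $\det(\partial^2 u_k) \ge \epsilon_0$ gives $\log\det(\partial^2 u_k) \ge \log\epsilon_0$, so Fatou's lemma applied to $\log\det(\partial^2 u_k) - \log\epsilon_0 \ge 0$ settles the $\liminf$ direction. For the matching $\limsup$, I would invoke the Vitali convergence theorem, which requires uniform integrability of $\log\det(\partial^2 u_k)\,\pi$. The negative part is trivially dominated. For the positive part, the elementary inequality $[\log t]^+ \le C(1 + \sqrt{t})$ (valid for all $t>0$), together with Cauchy--Schwarz and the boundedness of $\pi$, yields
\[
\int_E [\log\det(\partial^2 u_k)]^+ \pi\, dy \;\le\; C'|E| \;+\; C'\sqrt{|E|}\,\Bigl(\int_\Omega \det(\partial^2 u_k)\, dy\Bigr)^{1/2}.
\]
Alexandrov's estimate gives $\int_\Omega \det(\partial^2 u_k)\, dy \le |\partial u_k(\Omega)| \le |B(0,L)|$, uniformly in $k$, so $\int_E [\log\det(\partial^2 u_k)]^+ \pi \, dy \to 0$ as $|E| \to 0$ uniformly in $k$. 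This is the uniform integrability required for Vitali, and combined with the a.e.\ convergence $\partial^2 u_k \to \partial^2 u$ (which forces $\log\det(\partial^2 u_k) \to \log\det(\partial^2 u)$ a.e.\ by continuity of $\log\det$ on the set $\{\det \ge \epsilon_0\}$) it gives the desired limit.

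The main obstacle will be the upper tail of $\log\det(\partial^2 u_k)$: the hypotheses furnish no pointwise upper bound on the Hessian, so the dominated convergence theorem does not apply directly. The two-step estimate $[\log t]^+ \le C(1+\sqrt{t})$ followed by Cauchy--Schwarz is essential, since the coarser inequality $[\log t]^+ \le t$ would only yield a uniform $L^1$ bound on $[\log\det(\partial^2 u_k)]^+$, which is insufficient to rule out concentration on sets of small Lebesgue measure. Combining the two convergence statements then completes the proof.
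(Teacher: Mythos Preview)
Your argument is correct. The paper itself does not give a proof of this lemma: it simply states ``The following lemma can be proved as the same as \cite[Lemma 2.2]{ZZ08}. We omit the proof.'' So there is no in-paper proof to compare against; the intended argument is the one from \cite{ZZ08} in the toric case, adapted to include the extra term $\chi(\partial u)+4\rho(\partial u)$.

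Your approach is in the same spirit as that reference. A few minor remarks. The first-integral step is exactly right: local uniform convergence plus convexity gives a uniform gradient bound on $\Omega$, and together with $\alpha(\partial u_k)\ge\epsilon_0$ this confines $\partial u_k$ to a compact set on which $\chi+4\rho(\cdot)$ is continuous and bounded, so bounded convergence applies. For the second integral, your uniform-integrability argument via $[\log t]^+\le C(1+\sqrt{t})$ and Cauchy--Schwarz is clean; the bound $\int_\Omega\det(\partial^2u_k)\,dy\le|\partial u_k(\Omega)|\le|B(0,L)|$ that you attribute to ``Alexandrov's estimate'' is really just the fact that the regular part $\mu_r[u_k]=\det(\partial^2u_k)\,dy$ is dominated by the full Monge--Amp\`ere measure $\mu[u_k](\Omega)=|\partial u_k(\Omega)|$ (cf.\ \cite{TW}), but the conclusion is the same. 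Once Vitali gives you full $L^1$ convergence, the separate Fatou step for the $\liminf$ direction is redundant, though harmless.

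It is worth noting that later in the paper (Lemma \ref{705}) a closely related one-sided inequality is proved \emph{without} the lower bound $\det(\partial^2u_k)\ge\epsilon_0$, using instead a variational/entropy characterization of $-\int\log\det(\partial^2u)\pi\,dy$ together with upper semi-continuity of the Monge--Amp\`ere measure. That device is needed precisely because, absent the lower bound, the negative part of $\log\det(\partial^2u_k)$ is uncontrolled. In the present lemma the hypothesis $\det(\partial^2u_k)\ge\epsilon_0$ removes that difficulty, and your more elementary Vitali argument suffices.
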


For any $u\in\mathcal{\tilde C_*}$, we can  replace it  by $\tilde u(y):=u(y)+{\frac12}c|y|^2+\rho(y)$, where $c$ is sufficiently large such that
\begin{equation}\label{+704}
\det(\partial^2\tilde u)\geq c^n,~\alpha(\partial\tilde u)>\alpha(\rho)> 0,
\end{equation}
and
\begin{equation}\label{+704'}
\log\det(\partial^2\tilde u)-\chi(\partial \tilde u)-4\rho(\partial \tilde u)>n\log c-2\sum_{\alpha\in\Phi_+}\log\left({\frac{1-e^{-2\alpha(\rho)}}2}\right)>0.
\end{equation}
Then $-\mathcal N^+(u)<-\mathcal N^+(\tilde u)$. Thus $\tilde u$   satisfies (\ref{condition-regularity})  and  we need to estimate   $\mathcal N^+(\tilde u)$.
\begin{proof}[Proof of Proposition \ref{finite N}]
We first show Proposition  \ref{finite N}  is true for $u\in \mathcal{\tilde C_*}\cap C(\overline{2 P})$.
For any $\delta>0$, let $P^{\delta}:=(1-\delta)P$ be a dilated polytope and $P^{\delta}_+:=P^{\delta}\cap\bar{\mathfrak a}_+$. Define a family of smooth functions $u_h(y)=h^{-r}\int_{2P}\vartheta(h^{-1}(y-z))u(z)dz$ for small $h>0$ and $y\in2P^\delta$. Here $\vartheta(\cdot)$ is a support function in $\mathbf B_O(1)$ such that $\int_{\mathbf B_O(1)}\vartheta=1$. It is easy to see that $u_h$ is convex and $W$-invariant. Moreover, $\partial u_h\to\partial u$ and $\partial^2u_h\to\partial^2u$ almost everywhere.

For $\tilde {u}_h=u_h+{\frac12}c|y|^2+\rho(y)$, by (\ref{+42}) and integration by parts, we have
\begin{align}\label{710}
&\ \ \int_{2P_+^{\delta}}\left(\log\det(\partial^2\tilde{u}_h)-\chi(\partial \tilde {u}_h)-4\rho(\partial\tilde {u}_h)\right)\pi\, dy\notag\\
&\leq%-\int_{\partial(2P_+^{\delta})}(\left.{\frac{\partial\chi}{\partial x^i}}\right|_{x=\nabla u_0}+4\rho_i+ u_{0}^{ij}\nu_j\pi_{,i}u_h)\nu^iu_{h}\pi d\sigma_0\notag\\
-\int_{\partial(2P_+^{\delta})}\tilde Q^i\nu_i\tilde {u}_h\pi\, d\sigma_0+\int_{\partial(2P_+^{\delta})}\left(u_{0}^{ij}\nu_j\tilde {u}_{h,i}\pi-u_{0,i}^{ij}\nu_j\tilde{u}_h\pi \right)d\sigma_0\notag\\
&\ \ \ +\int_{2P_+^{\delta}}\left[u_{0,ij}^{ij}\pi+2u_{0,j}^{ij}\pi_{,i}+4\langle\rho,\nabla \pi\rangle-Q\pi\right]\tilde {u}_hdy.
\end{align}
Here $$\tilde Q^i=\left.{\frac{\partial \chi}{\partial x^i}}\right|_{x=\nabla u_0}+4\rho_i+{\frac{u_0^{ij}\pi_{,j}}{\pi}},$$
and $Q$ is given by (\ref{59}).
Let $\xi^i=\pi u_{0}^{ij}\nu_j$. We see   that $|\xi|=O((\alpha(y))^2)$ near $W_{\alpha}$.  By the convexity of $u_h$, $$|\xi^i\tilde {u}_{h,i}(y)|\leq\max\{|\tilde {u}_h(y+\xi)-\tilde u_h(y)|,|\tilde {u}_h(y-\xi)-\tilde {u}_h(y)|\}.$$
Since $\pi=0$ on Weyl walls, %$u_h$ is uniformly continues on $2\bar P^\delta$,
we have
\begin{eqnarray*}
\int_{\partial(2P_+^{\delta})\cap W_{\alpha}}u_{0}^{ij}\nu_j\tilde {u}_{h,i}\pi\, d\sigma_0=0,~\int_{\partial(2P_+^{\delta})\cap W_{\alpha}}\left(u_{0}^{ij}\nu_j\tilde {u}_{h,i}\pi-u_{0,i}^{ij}\nu_j\tilde {u}_h\pi\right) d\sigma_0=0.
\end{eqnarray*}
By taking $h\to0$ and then $\delta\to0$ with Lemma \ref{bound-measure}, we get
$$\int_{\partial(2P_+^{\delta})}\left(u_{0}^{ij}\nu_j\tilde {u}_{h,i}\pi-u_{0,i}^{ij}\nu_j\tilde {u}_h\pi \right)d\sigma_0%={\frac1{\#W}}\int_{\partial(2P^{\delta})}\left(u_{0}^{ij}\nu_ju_{h,i}\pi-u_{0,i}^{ij}\nu_ju_h\pi \right)d\sigma_0
\to\sum_A\int_{F_A}{\frac{2}{\lambda_A}}\widetilde u\langle y,\nu_A\rangle\pi\, d\sigma_0.$$

The last term in (\ref{710}) can be settled by
(\ref{43})-(\ref{u-0-ij-2}) and Proposition \ref{Q int}.
It remains to deal with the first term involving $\tilde Q$.
In fact,  by using  the similar argument as  in the proof of Lemma  \ref{521} (checking the Cases (i)-(iii) there), we can  get $|\tilde Q^i\nu_i|\leq C_{\tilde Q}$
for some uniform $C_{\tilde Q}$ depending only on $P$ and $u_0$.
% In fact, $|\tilde Q^i\nu_i|=O(\delta)$ on $\partial (2P_+^{\delta})$ and away from Weyl walls.
Now by Lemma \ref{702}, taking $h\to0$ and then $\delta\to0$ in (\ref{710}), we get a uniform constant $C$ such that
\begin{equation}\label{711}
\begin{aligned}
-\mathcal N^+(\tilde u)&\leq (1+C'_{\tilde Q})\sum_A\int_{F_A}{\frac{2}{\lambda_A}}\tilde u\langle y,\nu_A\rangle\pi\, d\sigma_0\\
&\ \ +\int_{2P_+}\left[u_{0,ij}^{ij}\pi+2u_{0,j}^{ij}\pi_{,i}+4\langle\rho,\nabla \pi\rangle-Q\pi\right]\tilde udy
+C(u_0)\\&\leq C\left(\int_{\partial(2P_+)}\tilde u\langle y,\nu\rangle\pi\, d\sigma_0+\int_{2P_+}(\langle\rho,\nabla \pi\rangle+\pi)\tilde u\, dy\right)+C(u_0).\nonumber
\end{aligned}
\end{equation}
Replacing $u$ by $\epsilon u$, we obtain (\ref{N^+}).

 For a  general  $u\in\tilde{\mathcal C}_*$, we consider
$u^t(\cdot)=u(t\cdot)$ for $0<t<1$. Then $\partial u^t\to\partial u\text{ and }\partial^2u^t\to\partial^2u$ almost everywhere when $t\to1^-$.  Since  $u^t\in C(\overline{2P})$,  $(\ref{N^+})$ holds for all $u^t$.  Note that the constants in $(\ref{N^+})$ are independent of $t$.   Thus  the proposition is proved.
\end{proof}

\subsection{The existence of minimizers}

We prove that $\mathcal K(\cdot)$ is lower semi-continuous on $\mathcal{\tilde C_*}$. Namely,

\begin{prop}\label{semi-continuity}
Suppose that $\{u_k\}\subset\mathcal{\tilde C_*}^\kappa$ converges locally uniformly to $u\in\mathcal{\tilde C_*}^\kappa$ for some $\kappa>0$, and
%\begin{equation}\label{N lower bound un}
$\mathcal N(u_k)<C_0$
%\end{equation}
for some constant $C_0$. Then
\begin{equation}\label{N lower bound}
\mathcal N(u)<+\infty
\end{equation}
and there exists a subsequence of $\{u_k\}$ such that
\begin{equation}\label{semi-c}
 \mathcal K(u)\leq \liminf_{k\to\infty}\mathcal K(u_k).
\end{equation}
\end{prop}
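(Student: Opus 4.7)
The plan is to mimic the strategy of \cite{ZZ08} for toric manifolds, using the singular estimates of Section 4 to handle the new contributions arising from the Weyl walls. The argument splits naturally: $\mathcal L$ is continuous along locally uniformly convergent sequences in $\tilde{\mathcal C}_*^\kappa$, while $\mathcal N$ is merely lower semi-continuous via joint convexity of its integrand. For the linear piece, formula \eqref{+5119} expresses $\mathcal L(u)$ as integrals of $u$ itself (not its derivatives) against $L^1$ weights; the uniform bounds defining $\tilde{\mathcal C}_*^\kappa$ provide an integrable envelope, and standard boundary-trace convergence for $W$-invariant convex functions gives $\mathcal L(u_k)\to\mathcal L(u)$ under locally uniform convergence.

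For $\mathcal N$, after passing to a subsequence I may assume $\partial u_k\to\partial u$ and $\partial^2 u_k\to\partial^2 u$ almost everywhere on $2P_+$, by Alexandrov's theorem and compactness. Write the integrand of $\mathcal N$ as $F(p,X)=-\log\det X+\chi(p)+4\langle\rho,p\rangle$; this is jointly convex in $(p,X)$ because $-\log\det$ and $\chi$ are convex. For any smooth strictly convex $W$-invariant test function $v$ with $\alpha(\partial v)>0$ on $\overline{2P_+}$, the convexity inequality yields the pointwise bound
\[
F(\partial u_k,\partial^2 u_k)\ \ge\ F(\partial v,\partial^2 v)\ -\ v^{ij}(u_{k,ij}-v_{,ij})\ +\ \bigl(\nabla\chi(\partial v)+4\rho\bigr)\cdot(\partial u_k-\partial v).
\]
Multiplying by $\pi$, integrating over $2P_+$, and then integrating by parts twice in the Hessian term and once in the gradient term produces an expression linear in $u_k$ itself (no more derivatives) plus boundary contributions along $\partial(2P_+)$ whose singularities near each Weyl wall $W_\alpha$ are absorbed by the vanishing of $\pi$, precisely as in the analysis of $Q$ in Lemma \ref{521}. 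Locally uniform convergence $u_k\to u$ then passes through, giving
\[
\liminf_{k\to\infty}\mathcal N(u_k)\ \ge\ \mathcal N(v)-\int_{2P_+}v^{ij}(u_{,ij}-v_{,ij})\,\pi\,dy+\int_{2P_+}\bigl(\nabla\chi(\partial v)+4\rho\bigr)\cdot(\partial u-\partial v)\,\pi\,dy.
\]
Finally, choosing a regularizing sequence $v=v_\ell$ for $u$ — obtained by first dilating ($u^{t_\ell}(y)=u(t_\ell y)$, $t_\ell\uparrow 1$), then mollifying at scale $h_\ell\downarrow 0$, and finally adding $\tfrac{c_\ell}{2}|y|^2+\rho(y)$ with $c_\ell\downarrow 0$, as in the proof of Proposition \ref{finite N} — Lemma \ref{702} shows the right-hand side converges to $\mathcal N(u)$. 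Combining with $\mathcal N(u_k)<C_0$ forces $\mathcal N(u)<+\infty$, and together with continuity of $\mathcal L$ this completes the proof of \eqref{semi-c} and \eqref{N lower bound}.

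The main obstacle is controlling the boundary terms and near-wall singularities produced by the integration by parts. Both $\nabla\chi(\partial v)$ (with its $\coth\alpha(\cdot)$ singularities) and the entries of $v^{ij}$ (whose behaviour was analyzed in Lemmas \ref{u^-1} and \ref{M_2}) blow up near $\bigcup_{\alpha\in\Phi_+} W_\alpha$, and they must be paired with the quadratic vanishing of $\pi$ along each Weyl wall so as to yield uniform bounds compatible with the $\kappa$-data of $\tilde{\mathcal C}_*^\kappa$; fortunately this is exactly the type of estimate already established for $Q$. A further delicate point is to ensure that the approximants $v_\ell$ can simultaneously be taken smooth strictly convex, $W$-invariant, keep $\alpha(\partial v_\ell)$ bounded away from zero on compact subsets of $2P_+\setminus\bigcup_\alpha W_\alpha$, and produce the correct limit for $F(\partial v_\ell,\partial^2 v_\ell)\,\pi$; this is handled by the combined dilation--mollification--quadratic correction already used in the proof of Proposition \ref{finite N}.
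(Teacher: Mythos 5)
Your plan diverges from the paper's proof at the key step, and the divergence opens a genuine gap. For the semicontinuity of $\mathcal N$ you linearize the convex integrand at a moving smooth base point $v_\ell\to u$, obtaining $\liminf_k\mathcal N(u_k)\ \ge\ \mathcal N(v_\ell)+\ell_{v_\ell}(u)-\ell_{v_\ell}(v_\ell)$, and then assert that the right-hand side tends to $\mathcal N(u)$. But by the very convexity you are using, $\mathcal N(u)\ \ge\ \mathcal N(v_\ell)+\ell_{v_\ell}(u)-\ell_{v_\ell}(v_\ell)$ as well, so your lower bound is a priori $\le\mathcal N(u)$: to close the argument you must show that the Bregman-type remainder tends to zero, i.e.\ both $\mathcal N(v_\ell)\to\mathcal N(u)$ and $\int_{2P_+} v_\ell^{ij}(u_{,ij}-v_{\ell,ij})\pi\,dy\to0$, $\int_{2P_+}(\nabla\chi(\partial v_\ell)+4\rho)\cdot(\partial u-\partial v_\ell)\pi\,dy\to0$. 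Lemma \ref{702} cannot deliver this: it only gives convergence on $\Omega\Subset 2P$, whereas all of the difficulty sits in the strip $2P_+\setminus 2P_+^\delta$; moreover $v_\ell^{ij}$ blows up wherever $u$ fails to be strictly convex, and $\partial^2u$ (defined a.e.) misses any singular part of the Monge--Amp\`ere measure, so $v_\ell^{ij}u_{,ij}$ need not converge to anything useful. In short, ``the right-hand side converges to $\mathcal N(u)$'' is itself a continuity statement for $\mathcal N$ along the regularization, which is essentially the proposition you are trying to prove. Relatedly, your logical order is inverted: you deduce \eqref{N lower bound} from \eqref{semi-c}, whereas the paper must prove \eqref{N lower bound} first (by contradiction, playing Lemma \ref{705} on $2P^\delta$ against the uniform bound \eqref{N^+} on the strip) precisely because finiteness of $\mathcal N(u)$ is needed to make the strip contribution of $u$ small.

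A second problem is that your integration by parts is performed against the variable base point $v_\ell$, so the boundary terms and the zero-order coefficients involve $v_\ell^{ij}$, $v_{\ell,j}^{ij}$ and $\nabla\chi(\partial v_\ell)$ near $\partial(2P_+)$ and the Weyl walls. The asymptotics you invoke (Lemmas \ref{bound-measure}, \ref{u^-1}, \ref{M_2}, \ref{521}) are established only for the explicit Guillemin function $u_0$, using \eqref{D2 of Guillemin func}; they do not hold for a mollified-and-dilated approximation of an arbitrary $u\in\tilde{\mathcal C}_*^\kappa$, which need not even satisfy $v_\ell^{ij}\nu_i\to0$ on the outer faces (Lemma \ref{bound-measure} uses the Guillemin boundary behaviour). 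The paper's route avoids both issues: it writes $\mathcal N(u)-\mathcal N(u_k)=I_1+I_2+I_3$, handles $I_3$ by the ingredient you omit --- the upper semicontinuity $\limsup_k\int_{2P^\delta}\log\det(\partial^2u_k)\pi\,dy\le\int_{2P^\delta}\log\det(\partial^2u)\pi\,dy$ of Lemma \ref{705}, obtained from the dual variational formula for $-\int\log\det$ and upper semicontinuity of the Monge--Amp\`ere measure, plus Fatou for the $\chi$ part --- and makes $I_1$ small uniformly in $k$ by the scaling trick $u_k\mapsto u_k/\Lambda$ combined with integration by parts against the \emph{fixed} $u_0$, for which the wall and boundary estimates are actually available. (A minor point: $\mathcal L$ is only lower semicontinuous here, not continuous, since the boundary trace of the locally uniform limit can drop; lower semicontinuity is all that is needed.)
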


We will modify the proofs in \cite[Section 3]{ZZ08}.  The proof is divided into several steps. First, we have

\begin{lem}\label{705}
Suppose that $\{u_k\}\subset\mathcal{\tilde C_*}^\kappa$ converges locally uniformly to $u\in\mathcal{\tilde C_*}^\kappa$ for some $\kappa>0$.   Then for any $\delta>0$, we have
\begin{equation}\label{706}
\limsup_{k\to\infty}\int_{2P^{\delta}}\log\det(\partial^2u_k)\pi\, dy\leq\int_{2P^{\delta}}\log\det(\partial^2u)\pi\, dy
\end{equation}
and
\begin{align}\label{707}
&\ \ \ \limsup_{k\to\infty}\int_{2P^{\delta}}\left[\log\sinh\alpha(\partial u_k)-\alpha(\partial u_k)\right]\pi\, dy\notag\\
&\leq\int_{2P^{\delta}}\left[\log\sinh\alpha(\partial u)-\alpha(\partial u)\right]\pi\, dy.
\end{align}
\end{lem}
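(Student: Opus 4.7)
The plan is to exploit the concavity of $\log\det$ on positive definite symmetric matrices and of $f(t) = \log\sinh t - t$ on $(0,\infty)$; the latter holds since $f''(t) = -4e^{2t}/(e^{2t}-1)^2 < 0$. Both inequalities in the lemma are upper-semicontinuity statements for concave functionals of convex functions along locally uniform convergence, and the standard route is concavity plus integration by parts plus smooth approximation of $u$.

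For (\ref{706}), pick a smooth strictly convex $W$-invariant function $v$ on a neighborhood of $\overline{2P^\delta}$. Concavity of $\log\det$ gives the pointwise bound
\[
\log\det(\partial^2 u_k) \leq \log\det(\partial^2 v) + v^{ij}(u_{k,ij} - v_{,ij}), \quad (v^{ij}) := (v_{,ij})^{-1}.
\]
Multiply by $\pi$, integrate over $2P^\delta$, and integrate by parts twice to transfer the second derivatives off $u_k$. Since $2P^\delta \Subset 2P$, every resulting boundary term on $\partial(2P^\delta)$ involves only the traces of $u_k$ and $\nabla u_k$ on $\partial(2P^\delta)$ against smooth coefficients; by locally uniform convergence $u_k \to u$ and a.e.\ convergence $\nabla u_k \to \nabla u$ (standard for convex functions), these pass to the limit as $k \to \infty$. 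Integrating by parts back, we obtain
\[
\limsup_{k\to\infty} \int_{2P^\delta} \log\det(\partial^2 u_k)\pi\, dy \leq \int_{2P^\delta}\bigl[\log\det(\partial^2 v) + v^{ij}(u_{,ij} - v_{,ij})\bigr]\pi\, dy,
\]
where $u_{,ij}$ on the right is interpreted in the distributional sense.

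It remains to approximate $u$ by admissible choices of $v$. Set $v_{\epsilon,h} = u \ast \varphi_\epsilon + \rho(y) + \tfrac{h}{2}|y|^2$, where $\varphi_\epsilon$ is a standard mollifier. Then $v_{\epsilon,h}$ is smooth and strictly convex with $\partial^2 v_{\epsilon,h} \geq h I$, and the shift by $\rho(y)$ guarantees $\alpha(\partial v_{\epsilon,h}) > 0$ exactly as in the proof of Proposition \ref{finite N}. Sending $\epsilon \to 0^+$ first and then $h \to 0^+$, Lemma \ref{702} applies thanks to these positivity lower bounds and implies that the right-hand side tends to $\int_{2P^\delta}\log\det(\partial^2 u)\pi\, dy$, establishing (\ref{706}).

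Inequality (\ref{707}) follows by exactly the same template with $\log\det(\partial^2(\cdot))$ replaced by $f(\alpha(\partial(\cdot)))$: concavity of $f$ produces
\[
f(\alpha(\partial u_k)) \leq f(\alpha(\partial v)) + f'(\alpha(\partial v))\,\alpha(\partial u_k - \partial v),
\]
and only one integration by parts is needed to shift the gradient off $u_k$ before passing to the limit $k \to \infty$ and then $\epsilon, h \to 0$. The main obstacle I anticipate is the regularity bookkeeping at the mollification step: for a general convex $u \in \tilde{\mathcal C}_*^\kappa$ the Hessian $\partial^2 u$ is only a matrix-valued Radon measure, so one must verify carefully that the concavity inequality survives the passage $\epsilon, h \to 0$. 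The vanishing of $\pi$ along the Weyl walls, combined with the uniform positive lower bounds on $\partial^2 v_{\epsilon,h}$ and $\alpha(\partial v_{\epsilon,h})$, is what allows Lemma \ref{702} to deliver the final convergence.
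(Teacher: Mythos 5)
Your route---concavity of $\log\det$ (resp.\ of $f(t)=\log\sinh t-t$), linearization around a smooth comparison function $v$, integration by parts, and a final approximation of $u$ by mollification---is essentially the argument of \cite[Lemma 3.1]{ZZ08}, which the paper cites and then deliberately replaces. The paper's written proof is quite different: for (\ref{706}) it uses the Berman--Berndtsson dual formula, representing $-\int_{\Omega}\log\det(\partial^2u)\pi\,dy$ over a compact set $\Omega=\Omega_{\epsilon'}$ avoiding the singular supports $\mathcal S_{u}$, $\mathcal S_{u_k}$ as a supremum of functionals $\int_{\Omega}f\pi\,dy-\log\int_{\Omega}e^f\det(\partial^2u)\pi\,dy$ (plus a normalization), each lower semicontinuous in $u$ by the weak upper semicontinuity of the Monge--Amp\`ere measure, and controls $2P_+^\delta\setminus\Omega_{\epsilon'}$ by Jensen's inequality; for (\ref{707}) it is a one-line reverse Fatou, since $\log\sinh t-t=\log\frac{1-e^{-2t}}{2}\leq\log\frac12$ gives an integrable upper bound and $\partial u_k\to\partial u$ a.e. No boundary terms or comparison functions appear.

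Your version has concrete gaps at the decisive step. (a) After undoing the integration by parts, the right-hand side contains $\int v_{\epsilon,h}^{ij}\pi\,d\mu[u]_{ij}$ against the \emph{full} distributional Hessian, whose singular part contributes the nonnegative quantity $\int v_{\epsilon,h}^{ij}\pi\,d\mu_s[u]_{ij}$. Lemma \ref{702} does not touch this term: it concerns only the functional $\int(\chi(\partial\cdot)+4\rho(\partial\cdot))\pi\,dy-\int\log\det(\partial^2\cdot)\pi\,dy$ along a uniformly nondegenerate sequence. Without showing that the singular contribution vanishes as $\epsilon,h\to0$, your upper bound is $\int\log\det(\partial^2u)\pi\,dy$ plus an uncontrolled nonnegative error; this is exactly the obstacle you "anticipate" but do not resolve. (b) For (\ref{707}) the scheme cannot close as stated: $v_{\epsilon,h}=u\ast\varphi_\epsilon+\rho(y)+\tfrac h2|y|^2$ converges to $u+\rho$, not $u$; the linear shift is invisible to $\partial^2$ but not to $\partial$, so you obtain $\limsup_k\int f(\alpha(\partial u_k))\pi\leq\int\bigl[f(\alpha(\partial u)+\alpha(\rho))-\alpha(\rho)f'(\alpha(\partial u)+\alpha(\rho))\bigr]\pi\,dy$, which by concavity of $f$ is $\geq$ the right-hand side of (\ref{707})---a strictly weaker statement. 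Removing the shift forces $f'(\alpha(\partial v))=\coth\alpha(\partial v)-1\to+\infty$ along the Weyl walls, where one would need the vanishing of $\pi$ and the $Q$-type asymptotics of Section 4.3 to control the integrated-by-parts coefficients. (c) More minor: the boundary term $\int_{\partial(2P^\delta)}v^{ij}\nu_ju_{k,i}\pi\,d\sigma$ requires $\nabla u_k\to\nabla u$ surface-a.e.\ on the fixed hypersurface $\partial(2P^\delta)$; Lebesgue-a.e.\ convergence in $2P$ does not give this for \emph{every} $\delta$, since the non-differentiability set of $u$ may contain a positive-measure piece of $\partial(2P^\delta)$.
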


\begin{proof}
 (\ref{706}) can be proved as the same as \cite[Lemma 3.1]{ZZ08}. Here we give an alternative proof.
  Let $\mathcal S$ be a  union of  supports sets $\mathcal S_u$ and all $\mathcal S_{u_k}$. Then $\forall\epsilon'>0$, there is a closed subset $\Omega_{\epsilon'}\subset 2P^{\delta}_+\backslash \mathcal S$ such that $\int_{2P_+^\delta\backslash\Omega_{\epsilon'}}\pi\,dy<\epsilon'$.  We observe (cf. \cite[Proposition 3.1]{BB}),
$$\begin{aligned}-\int_{\Omega_{\epsilon'}}\log\det(\partial^2u)\pi \,dy&=\sup_{f\in C(2P_+^\delta)}\left(\int_{\Omega_{\epsilon'}}f\pi \,dy-\log\int_{\Omega_{\epsilon'}}e^f\det(\partial^2u)\pi\, dy\right)\\
&\ \  \ -\log\left(\int_{\Omega_{\epsilon'}}\det(\partial^2u)\pi \,dy\right)\int_{\Omega_{\epsilon'}}\pi \,dy.\end{aligned}$$
Then for a fixed function $f$, by the upper semi-continuity of Monge-Amp\'ere measure,
$$-\log\int_{\Omega_{\epsilon'}}e^f\det(\partial^2u)\pi \,dy$$ is lower semi-continuous as a functional of $u$.
 Thus
\begin{align}\label{sequence-convergence}\limsup_{k\to\infty}\int_{\Omega_{\epsilon'}}\log\det(\partial^2u_k)\pi\, dy\leq\int_{\Omega_{\epsilon'}}\log\det(\partial^2u)\pi\, dy.
\end{align}
On the other hand, since osc$u_k$ are uniformly bounded on $2P^{\frac{\delta}{2}}$, we have
$$\int_{\Omega_{\epsilon'}}\det(\partial^2u_k)\pi \,dy\leq\int_{2P_+^\delta}\det(\partial^2u_k)\pi \,dy\leq C_0\left({\frac{\text{osc} u_k}{\delta}}\right)^r<\infty,$$
where $C_0$ is independent of $k$. Then by the concavity of $\log$,
\begin{align}\label{sequence-small}\int_{2P_+^\delta\backslash\Omega_{\epsilon'}}\log\det(\partial^2u_k)\pi\,dy
\leq\int_{2P_+^\delta\backslash\Omega_{\epsilon'}} \pi dy\cdot\log\left[ \frac{C_0(\frac{\text{osc} u_k}{\delta})^r}{  \int_{2P_+^\delta\backslash\Omega_{\epsilon'}} \pi dy}\right].
\end{align}
Combining (\ref{sequence-convergence}) and (\ref{sequence-small}), we have
$$\limsup_{k\to\infty}\int_{2P^\delta_+}\log\det(\partial^2u_k)\pi\,dy\leq\int_{\Omega_{\epsilon'}}\log\det(\partial^2u)\pi\, dy+\epsilon'\log\left[ \frac{C_0(\frac{\text{osc} u_k}{\delta})^r}{\epsilon'}\right],$$
letting $\epsilon'\to0$,  we get (\ref{706}).

(\ref{707}) follows from  Fatou's Lemma.
\end{proof}

\begin{proof} [Proof of Proposition \ref{semi-continuity}]. First  we use a contradiction argument to prove (\ref{N lower bound}).
Suppose $\mathcal N(u)=+\infty$. Then for any $C>0$ there exists a $\delta_C>0$ such that
$$
-\int_{2P_+^\delta}\left[\log\det(\partial^2u)-\chi(\partial u)-4\rho(\partial u))\right]\pi \,dy\geq C,~\forall~ 0\leq\delta<\delta_C.
$$
Thus by Lemma \ref{705}, for any $\epsilon>0$, there exists an $k_{\epsilon,\delta}$ such that
$$
-\int_{2P_+^\delta}\left[\log\det(\partial^2u_k)-\chi(\partial u_k)-4\rho(\partial u_k))\right]\pi \,dy\geq C-\epsilon,~\forall ~k\geq k_{\epsilon,\delta}.
$$
Together with the assumption of $\mathcal N(u_k)<C_0$, we get
$$
\int_{2P_+\backslash 2P_+^\delta}\left[\log\det(\partial^2u_k)-\chi(\partial u_k)-4\rho(\partial u_k))\right]\pi \,dy\geq C-C_0-\epsilon.
$$
On the other hand, by (\ref{N^+}), we  also have
$$\int_{2P_+\backslash 2P_+^\delta}\left[\log\det(\partial^2u_k)-\chi(\partial u_k)-4\rho(\partial u_k)\right]\pi \,dy\leq-\mathcal N^+(u_k)\leq C'\kappa$$
for some uniform $C'$. Hence we get a contradiction since  the constant $C$ can be taken sufficiently  large.
(\ref{N lower bound})  is true.

\vskip 6pt
Next we prove (\ref{semi-c}). Since  the linear part $\mathcal L(\cdot)$ of $\mathcal K(\cdot)$  is lower semi-continuous,  it suffices to deal with
the nonlinear part $\mathcal N(\cdot)$. Observe
\begin{align}
\mathcal N(u)-\mathcal N(u_k)&=\int_{2P_+\backslash 2P_+^\delta}\left[\log\det(\partial^2u_k)-\chi(\partial u_k)-4\rho(\partial u_k)\right]\pi\, dy\nonumber\\
&\ \ \ -\int_{2P_+\backslash 2P_+^\delta}\left[\log\det(\partial^2u)-\chi(\partial u)-4\rho(\partial u)\right]\pi\, dy\nonumber\\
&\ \ \ +\int_{2P_+^\delta}\left[\log\frac{\det(\partial^2u_k)}{\det(\partial^2u)}-\chi(\partial u_k)+\chi(\partial u)+4\rho(\partial u-\partial u_k)\right]\pi\, dy\label{718}\notag\\
&:=I_1+I_2+I_3.
\end{align}
In view of (\ref{N lower bound}), for any $\epsilon>0$, there is a $\delta_\epsilon>0$ such that for any $\delta<\delta_\epsilon$,
$I_2<\epsilon$.
By Lemma \ref{705}, there is an $k_{\epsilon,\delta}>0$ such that for any $k>k_{\epsilon,\delta}$,
$I_3<\epsilon$. It remains to estimate $I_1$.

We use a scaling trick to get a similar estimate as (\ref{N^+}).
For any $\Lambda> 1$,
\begin{align}\label{scale-difference}
&\ \ \  \log\det(\partial^2 u_k)-\chi(\partial u_k)-4\rho(\partial u_k)\notag\\
&\leq\log\det\left(\frac{\partial^2 u_k}{\Lambda}\right)-\chi\left(\frac{\partial u_k}{\Lambda}\right)-4\rho\left(\frac{\partial u_k}{\Lambda}\right)+n\log\Lambda.
\end{align}
By (\ref{+42}) and integration by parts, we have
\begin{align}\label{713}
&\ \ \ \ \int_{2P_+\backslash 2P_+^\delta}\left[\log\det\left(\frac{\partial^2 u_k}{\Lambda}\right)-\chi\left(\frac{\partial u_k}{\Lambda}\right)-4\rho\left(\frac{\partial u_k}{\Lambda}\right)\right]\pi\, dy\notag\\
&\leq {\frac1{\Lambda}}\left(\int_{\partial(2P_+)}-\int_{\partial(2P^\delta_+)}\right)\left[u_0^{ij}\nu_ju_{k,i}-u_{0,j}^{ij}\nu_iu_k+\tilde Q^i\nu_iu_k\right]\pi\, d\sigma_0\notag\\
&\ \ \ +{\frac1{\Lambda}}\int_{2P_+\backslash 2P_+^\delta}\left[u_{0,ij}^{ij}\pi+2u_{0,j}^{ij}\pi_{,i}+4\langle\rho,\nabla \pi\rangle-Q\pi\right]u_k\,dy +C(u_0)\delta.\nonumber
\end{align}
Note that $|\tilde Q^i\nu_i|$ is bounded and $\{u_k\}\subset\tilde{\mathcal C}_*^\kappa$. Then,  by \cite{ZZ08}, there are $C_1, C_2>0$, such that
\begin{align}
&\ \ \left(\int_{\partial(2P_+)}-\int_{\partial(2P^\delta_+)}\right)\left[u_0^{ij}\nu_ju_{k,i}-u_{0,j}^{ij}\nu_iu_k+\tilde Q^i\nu_iu_k\right]\pi\, d\sigma_0\notag\\
&\leq C_1\sum_A{\frac2{\lambda_A}}\int_{F_A}u_k\langle y,\nu\rangle\pi\, d\sigma_0\leq C_1\kappa.\notag
\end{align}
Moreover, by  (\ref{522}) (also see Remark \ref{remark-Q}), we have
\begin{align}
& \ \  \ \int_{2P_+\backslash 2P_+^\delta}\left[u_{0,ij}^{ij}\pi+2u_{0,j}^{ij}\pi_{,i}+4\langle\rho,\nabla \pi\rangle-Q\pi\right]u_k\,dy\notag\\
&\leq C_2\int_{2P_+\backslash 2P_+^\delta}u_k(\pi+\langle\rho,\nabla \pi\rangle) \,dy\leq C_2\kappa.\notag
\end{align}
Thus
$$\int_{2P_+\backslash 2P_+^\delta}\left[\log\det\left(\frac{\partial^2 u_k}{\Lambda}\right)-\chi\left(\frac{\partial u_k}{\Lambda}\right)-4\rho\left(\frac{\partial u_k}{\Lambda}\right)\right]\pi\, dy
 \le  {\frac{(C_1+C_2)\kappa}{\Lambda}}+C(u_0)\delta.$$
Hence by (\ref{scale-difference}), we obtain
\begin{eqnarray*}
I_1\leq {\frac{(C_1+C_2)\kappa}{\Lambda}}+C(u_0)\delta+C\delta\log\Lambda.
\end{eqnarray*}
Choosing  a sufficiently large $\Lambda$ such that ${\frac{(C_1+C_2)\kappa}{\Lambda}}<\epsilon$, and  $\delta$ small enough, we get  $I_1 \leq 2\epsilon$.
The proposition is proved.

\end{proof}

\begin{proof}[Proof of
Theorem \ref{LZZ2}]  The first part follows from Proposition \ref{semi-continuity}.
For the second part, we take a minimizing sequence $\{u_k\}$ of $\mathcal K(\cdot)$ in
$\mathcal C_{\infty,+}$.  Then by Lemma \ref{609} and the properness of $\mu(\cdot)$,  there exists  a constant $\kappa$ such that the normalized sequence  $\tilde u_k$ is a subset of  $\mathcal{\tilde C_*}^\kappa$.  Moreover,
$\mathcal N(u_k)<C_0$ for some $C_0$.
 Thus by Lemma \ref{extend u},  there is a limit $u$ of a subsequence of  $\tilde u_k$ in $\mathcal{\tilde C_*}^\kappa.$  Proposition \ref{semi-continuity} implies that   $u$ is a minimizer of  $\mathcal K(\cdot)$ in  $\mathcal{\tilde C_*}.$

\end{proof}

\section{Appendix}

\begin{lem}\label{appendix}
Let  $\lambda_i ~{i=1,...,m}$ be $m$ positive real numbers and $c_i(y)>0$ $m$ positive functions.  Let  $\alpha(y)>0$ be another positive function such that
$$ {\frac{\alpha(y)}{c_{i}(y)}}\le \epsilon_0<<1,~i=1,...,m.$$
Then
\begin{equation}\label{matrix-l}
\Delta:=\left(\sum_{i}\frac{1}{c_i(y)
+\lambda_i\alpha(y)}\right)^{-1}
-\left(\sum_i{\frac{1}{c_{i}(y)}}\right)^{-1}
=O(\alpha(y)).
\end{equation}
\end{lem}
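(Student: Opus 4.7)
The idea is to combine the difference into a single fraction, extract the factor $\alpha(y)$ explicitly by a telescoping identity, and then use the hypothesis $\alpha/c_i\le\epsilon_0$ to control the remaining quotient by a uniform constant.

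First I would set $S:=\sum_{i=1}^m c_i(y)^{-1}$ and $S':=\sum_{i=1}^m(c_i(y)+\lambda_i\alpha(y))^{-1}$, so that
$$
\Delta=\frac{1}{S'}-\frac{1}{S}=\frac{S-S'}{S\,S'}.
$$
A direct computation gives
$$
S-S'=\sum_{i=1}^m\frac{\lambda_i\alpha(y)}{c_i(y)\bigl(c_i(y)+\lambda_i\alpha(y)\bigr)}=\alpha(y)\sum_{i=1}^m\frac{\lambda_i}{c_i(y)\bigl(c_i(y)+\lambda_i\alpha(y)\bigr)},
$$
so the factor $\alpha(y)$ is already exhibited, and the task reduces to bounding the prefactor uniformly in $y$.

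Next, writing $\lambda_{\max}:=\max_i\lambda_i$, the hypothesis $\alpha(y)/c_i(y)\le\epsilon_0$ gives $c_i\le c_i+\lambda_i\alpha\le(1+\lambda_{\max}\epsilon_0)c_i$, and hence the uniform two-sided comparison
$$
\frac{1}{1+\lambda_{\max}\epsilon_0}\,S\;\le\;S'\;\le\;S.
$$
In particular $S\,S'\ge S^2/(1+\lambda_{\max}\epsilon_0)$, so it suffices to bound $(S-S')/S^2$.

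Finally, the elementary inequality $\sum_i c_i^{-2}\le\bigl(\sum_i c_i^{-1}\bigr)^2=S^2$ (valid for any positive numbers $c_i$, since $(\sum a_i)^2\ge\sum a_i^2$) yields
$$
\sum_{i=1}^m\frac{\lambda_i}{c_i(c_i+\lambda_i\alpha)}\;\le\;\sum_{i=1}^m\frac{\lambda_i}{c_i^2}\;\le\;\lambda_{\max}\,S^2,
$$
and putting everything together,
$$
0\;\le\;\Delta\;\le\;\frac{\alpha(y)\,\lambda_{\max}\,S^2}{S\,S'}\;\le\;\lambda_{\max}(1+\lambda_{\max}\epsilon_0)\,\alpha(y),
$$
which is exactly the asserted $O(\alpha(y))$ bound. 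There is essentially no obstacle: the entire content is the comparability $S\asymp S'$, which is supplied uniformly by the hypothesis $\alpha/c_i\le\epsilon_0$, together with the crude bound $\sum c_i^{-2}\le S^2$.
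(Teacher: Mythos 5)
Your proof is correct, and it takes a genuinely different and rather cleaner route than the paper. The paper clears denominators completely: it writes each harmonic sum as a ratio of products, $\bigl(\sum_i (c_i+\lambda_i\alpha)^{-1}\bigr)^{-1}=\prod_k(c_k+\lambda_k\alpha)\big/\sum_i\prod_{j\neq i}(c_j+\lambda_j\alpha)$, puts $\Delta$ over a common denominator as $\Delta_1/\Delta_2$, expands both as polynomials in $\alpha$ with symmetric-function coefficients, and observes that the lowest-order term of $\Delta_1$ is linear in $\alpha$ while $\Delta_2$ stays bounded away from zero. You avoid all of that combinatorial bookkeeping by writing $\Delta=(S-S')/(SS')$, extracting the factor $\alpha$ termwise from $c_i^{-1}-(c_i+\lambda_i\alpha)^{-1}=\lambda_i\alpha/\bigl(c_i(c_i+\lambda_i\alpha)\bigr)$, and then using the uniform comparability $S/(1+\lambda_{\max}\epsilon_0)\le S'\le S$ together with $\sum_i c_i^{-2}\le S^2$ to bound the prefactor. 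Each step is verifiable at a glance, and you get the explicit constant $\lambda_{\max}(1+\lambda_{\max}\epsilon_0)$, whereas the paper's $O(\alpha(y))$ hides the constant inside an expansion whose remainder terms are only sketched. The one thing the paper's version makes slightly more visible is the exact polynomial structure of numerator and denominator, but nothing downstream in the paper (the use in Lemma 4.7) needs that; your argument fully suffices.
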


\begin{proof}
Denote $I=\{1,...,m\}$. Since
\begin{eqnarray*}
\left(\sum_{i\in I}{\frac{1}{{c_{i}
+\lambda_i\alpha}}}\right)^{-1}={\frac{\prod_{k\in I}(c_k+\lambda_k\alpha)}{\sum_{i\in I}\prod_{j\not=i}(c_j+\lambda_j\alpha)}},~
\end{eqnarray*}
\begin{eqnarray*}
\Delta={\frac{\prod_{k\in I}(c_k+\lambda_k\alpha)\sum_{i\in I}\prod_{j\not=i}c_i-\prod_{k\in I}c_k\sum_{i\in I}\prod_{j\not=i}(c_j+\lambda_j\alpha)}{\sum_{i,k\in I}\prod_{j\not=i}\prod_{l\not=k}c_j(c_{l}
+\lambda_l\alpha)}}=:{\frac{\Delta_1}{\Delta_2}}.
\end{eqnarray*}
By a direct computation,  we have
$$\Delta_1=\sum_{i\in I}\prod_{j\not=i}c^2_j
\lambda_i\alpha
+\sum_{l=2}^{m}\sum_{i_1,...,i_{2m-1-l}}\lambda'_{i_1...i_{2m-1-l}}c_{i_1}...c_{i_{2m-1-l}}\alpha^l,$$
where $\lambda'_*$ are constants and $c_{i_1}...c_{i_{2m-1-l}} $ is a $(2m-1-l)$ product of $c_k$  of the form
$$\prod_{i'\in \{i_1,..., i_{m-l}\}\subset I}c_{i'}\prod_{j'\in \{i_1,...,i_{m-1}\}\subset I}c_{j'}\ \  \ {\rm or}~ \prod_{i=1,..., m}c_{i}
\prod_{j'\in\{i_1,...,i_{m-l-1}\}\subset I}c_{j'}.$$
Similarly,
$$\Delta_2=\sum_{i,k\in I}\prod_{j\not=i,l\not=k}c_jc_l
+\sum_{l=1}^{m-1}\sum_{i_1,...,i_{2m-2-l}}\lambda''_{i_1...i_{2m-2-l}}c_{i_1}...c_{i_{2m-2-l}}\alpha^l,
$$
where $\lambda''_*$ are constants and $c_{i_1}...c_{i_{2m-2-l}}$ is a $(2m-2-l)$ product of $c_k$  of  the form
$$\prod_{i'\in\{i_1,..., i_{m-l-1}\}\subset I}c_{i'}\prod_{j'\in\{i_1,...,i_{m-1}\}\subset I}c_{j'}.$$
Then  one can show
$$0<\Delta\le {\frac{\sum_{i\in I}\prod_{j\not=i}c^2_j(y)
\lambda_i\alpha(y)(1+o(1))}{\sum_{i,k\in I}\prod_{j\not=i,l\not=k}c_j(y)c_l(y)}}=O(\alpha(y)).
$$
The lemma is proved.
\end{proof}

\end{document}